\definecolor{ghcolor}{RGB}{0, 150, 200} 
\definecolor{winestain}{rgb}{0.5,0,0}
\newtheorem{theorem}[subsubsection]{Theorem}
\newtheorem{thm}[subsubsection]{Theorem}
\newtheorem{lemma}[subsubsection]{Lemma}
\newtheorem{corollary}[subsubsection]{Corollary}
\newtheorem{prop}[subsubsection]{Proposition}
\newtheorem{co}[subsubsection]{Corollary}
\theoremstyle{definition}
\newtheorem{defn}[subsubsection]{Definition}
\newtheorem{definition}[subsubsection]{Definition}
\newtheorem{example}[subsubsection]{Example}
\newtheorem{convention}[subsubsection]{Convention}
\theoremstyle{remark}
\newtheorem{remark}[subsubsection]{Remark}
\numberwithin{equation}{subsection}
\def\numequation{\addtocounter{subsubsection}{1}\begin{equation}}
\def\nummultline{\addtocounter{subsubsection}{1}\begin{multline}}
\def \m {\mathfrak M}
 \def \E{\mathcal E}
\def \Z {\mathbb Z}
\def \inj {\hookrightarrow }
\def \to {\rightarrow}
\def \onto {\twoheadrightarrow}
\def \spec \text{spec}
\def \cont \text{cont}
 \def \M{\mathfrak M}
\def \GL {\t{GL}}
\def \L {\mathfrak L}
\def \Hom {\textnormal{Hom}}
\def  \cris {\textnormal{cris}}
\DeclareMathOperator{\gal}{Gal}
\DeclareMathOperator{\Fil}{Fil}
\def \Q {\mathbb Q}
\def \t {\textnormal}
\def \Z {\mathbb Z}
\def \ito {\overset  \sim  \to}
\def \O {\mathcal O}
\def \BF {\mathbb F}
\def \gs {\mathfrak S}
\def \ur {\t{ur}}
\def \N {\mathfrak N}
\def \m {\mathfrak M}
\def \Ker {\textnormal{Ker}}
\def \ku {k \llbracket u\rrbracket}
\def \v {\vee}
\def \Fr {\t{Fr}}
\def \Fil {\t{Fil}}
\def \W {\mathfrak W}
\def \gt {\mathfrak t}
\def \cW {\mathcal W}
\def \cT {\mathcal T}
\def \acris {{A_{\t{cris}}}}
\def \R {\mathcal R}
\def \sfi {\t{Mod}_{\gs}^{\varphi, r}}
\def \st {\textnormal{st}}
\def \upi {\underline \pi }
\def \< {\left <}
\def \> {\right >}
\def \hR {{\widehat \R} }
\def \hM {{\hat \M}}
\def \gr{\textnormal{gr}}
\def \upi {{\underline{\pi}}}
\def \inv {{\t{inv}}}
\def \Md {{\rm M}_d}
\def \fe {{\mathfrak e}}
\def \e {{\mathfrak e}}
\def \gZ {\mathfrak Z}
\def \Vbar  {\overline V}
\def \Ubar  {\overline U}
\def \cO { \mathcal O}
\def \Zp { \mathbb Z_p}
\def \Qp { \mathbb Q_p}
\def \Res {\textnormal{Res}}
\def \res {\textnormal{res}}
\def \Cor  {\textnormal{cor}}
\def \cor  {\textnormal{cor}}
\def \Qpbar {\overline{\Q_p}}
\def \Fpbar {\overline {\BF_p} }
\def \Fp  {\BF_p}
\def \FrR {\t{Fr} R}
\def \sto {\twoheadrightarrow}
\DeclareMathOperator{\Gal}{Gal}
\DeclareMathOperator{\Ind}{Ind}
\DeclareMathOperator{\rank}{rank}
\newcommand*{\wt}[1]{\widetilde{#1}}
\newcommand{\otimesvarphi}{\otimes_{\varphi}}
\begin{document}

\title{Loose crystalline lifts and overconvergence of \'etale $(\varphi, \tau)$-modules}

\author{Hui Gao}
\address{Department of Mathematics and Statistics, University of Helsinki, FI-00014, Finland}
\email{hui.gao@helsinki.fi}
\thanks{The first author is partially supported by China Postdoctoral Science Foundation Grant 2014M550539, and a postdoctoral position funded by Academy of Finland through Kari Vilonen.}

\author{Tong Liu}
\address{Department of Mathematics, Purdue University, West Lafayette, IN 47907}
\email{tongliu@math.purdue.edu}
\thanks{The second author is partially supported by NSF grant DMS-1406926.}

\subjclass{Primary  14F30,14L05}

\keywords{Overconvergence, Kisin modules}

\begin{abstract} Let $p$ be a prime, $K$ a finite extension of $\Qp$, and let $G_K$ be the absolute Galois group of $K$.
The category of \'etale $(\varphi, \tau)$-modules is equivalent to the category of $p$-adic Galois representations of $G_K$. In this paper, we show that all \'etale $(\varphi, \tau)$-modules are overconvergent; this answers a question of Caruso. Our result is an analogue of the classical overconvergence result of Cherbonnier and Colmez in the setting of \'etale $(\varphi, \Gamma)$-modules. However, our method is completely different from theirs.
Indeed, we first show that all $p$-power-torsion representations admit loose crystalline lifts; this allows us to construct certain Kisin models in these torsion representations.
We study the structure of these Kisin models, and use them to build an overconvergence basis.
\end{abstract}

\maketitle

\tableofcontents

\section{Introduction}

\subsection{Overview and main theorem}
Let  $p$ be a prime, $K$ a finite extension of $\Q_p$.
Let $\O_K$ be the ring of integers, $k$ the residue field, and $K_0$ the maximal unramified subextension. Then the ring of Witt vectors $W:=W(k)$ is just the ring of integers of $K_0$ and $K/ K_0$ is totally ramified. Write $e := [K: K_0]$, $f := [k : \BF_p]$ with $\BF_p := \Z/p\Z$. We fix an algebraic closure $\overline {K}$ of $K$ and set $G_K:=\Gal(\overline{K}/K)$.

Let $\rho : G_K \to \GL_d (\Z_p)$ be a continuous representation and let $T$ be the ambient space. In \cite{Car2}, for each such $T$ is associated an \'etale $(\varphi, \tau)$-module, which is a triple $\hat M = (M, \varphi_M, \hat G)$ (actually, we are using a variant of the definition in \cite{Car2}, see our Definition \ref{defn phi tau mod}). By \cite{Car2}, the category of finite free $\Zp$-representation of $G_K$ is equivalent to the category of finite free \'etale $(\varphi, \tau)$-modules. The functor from $T$ to $\hat M$ is via:
$$ M(T)  = \left ( \O_{\widehat \E^\ur} \otimes_{\Z_p} T \right ) ^{G_\infty} \ \text{ and } \  \hat M(T) = \left (W(\FrR) \otimes_{\Z_p} T \right ) ^{H_\infty} .$$
See \S \ref{sec phi tau} for any unfamiliar terms and more details. Here $ \O_{\widehat \E^\ur}$ and $W(\FrR)$ are certain ``period rings".

For the $\Zp$-representation $T$, we can also define its ``overconvergent periods" via:
$$ M^{\dagger, r} (T): = \left ( \O ^{\dag, r}_{\widehat  \E ^\ur} \otimes_{\Z_p} T \right ) ^{G_\infty} \ \text{ and } \  \hat M^{\dagger, r}(T) := \left (W(\FrR)^{\dag, r} \otimes_{\Z_p} T \right ) ^{H_\infty} ,$$
where $r \in \mathbb R^{>0}$, and $\O ^{\dag, r}_{\widehat  \E ^\ur}$ and $W(\FrR)^{\dag, r}$ are the ``overconvergent period rings".
We say that $\hat M = (M, \varphi_M, \hat G)$ is \emph{overconvergent} if it can be recovered by its ``overconvergent periods", \emph{i.e.}, if for some $r \in \mathbb R^{>0}$, we have
$$ M(T) = \O_\E  \otimes_{\O_\E ^{\dag, r}}   M^{\dagger, r} (T) \ \text{ and } \
 \hat M(T) = W(F_\tau) \otimes_{W(F_\tau) ^{\dag, r} } \hat M^{\dagger, r}(T).
$$
Our main theorem is the following, which answers the question of Caruso \cite[\S4]{Car2}:
\begin{thm}
For any finite free $\Z_p$-representation $T$ of $G_K$, its associated $(\varphi, \tau)$-module is overconvergent.
\end{thm}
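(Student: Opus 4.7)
The plan is to follow the roadmap indicated in the abstract, proving overconvergence for the finite free representation $T$ by producing it with a \emph{uniform} radius on each torsion quotient. Set $T_n := T / p^n T$. If for every $n$ one can exhibit $\gs$-bases of canonically associated torsion Kisin models inside $M(T_n)$ whose $\varphi$- and $\tau$-matrices have entries in the overconvergent rings $\O_\E^{\dag, r}$ and $W(\FrR)^{\dag, r}$ with radius $r>0$ independent of $n$, then taking the inverse limit over $n$ (using $p$-adic completeness of the overconvergent period rings and the Mittag--Leffler property of the tower $\{T_n\}$) yields overconvergence of the $(\varphi,\tau)$-module associated to $T$ itself.

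The first technical input is the existence of \emph{loose} crystalline lifts. For each $p$-power-torsion representation $\bar T$, I would construct a finite free $\Zp$-representation $L$ of $G_K$ which is crystalline, together with a $G_K$-equivariant surjection $L \twoheadrightarrow \bar T$, allowing the Hodge--Tate weights of $L$ to be as large as necessary. The word ``loose'' reflects precisely this freedom: no fixed Hodge--Tate type is imposed. Such lifts should be produced inductively, building $L$ up as a sequence of crystalline extensions: given a partial crystalline lift, one extends by a crystalline character of sufficiently large weight and shows by a cohomological argument that the required extension classes exist.

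To each lift $L$ one attaches, via Kisin's theory, a $\varphi$-stable $\gs$-lattice $\mathfrak M(L)$ of finite $E(u)$-height inside the \'etale $\varphi$-module $M(L)[1/u]$. Reducing modulo $p^n$ and pushing out along $L \twoheadrightarrow \bar T$ produces a Kisin model $\mathfrak M_n \subset M(\bar T)$. The heart of the argument is then a careful study of the $\hat G$-action on $\mathfrak M_n$: exploiting crystallinity of $L$, one identifies the matrix of the generator $\tau$ in a suitable basis of $\mathfrak M_n$ as an element of $W(\FrR)^{\dag,r}$, with $r$ bounded below by a quantity depending only on the $E(u)$-height of the Kisin module and the ramification of $K$ --- crucially independent of $n$ and of the Hodge--Tate weights of $L$.

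The main obstacle I anticipate is precisely this uniformity in $n$. Because the Hodge--Tate weights of the loose lifts $L$ grow with $n$, estimates transported naively from $L$ will not give a uniform overconvergence radius. Instead, one must work directly with the torsion Kisin models $\mathfrak M_n$ and extract overconvergence bounds that depend only on the finite-height structure --- for instance, by controlling $u$-adic valuations of the $\tau$-matrix via the inclusion $\varphi(\mathfrak M_n) \supset E(u)^h \mathfrak M_n$ --- and then verify that these bounds translate to a positive overconvergent radius in the ambient period rings. Once this uniformity is secured, the inverse limit assembles the overconvergent period lattices of the $T_n$ into overconvergent periods for $T$ itself, completing the proof.
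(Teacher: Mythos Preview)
Your proposal has the correct architecture but contains a genuine gap at precisely the point you flag as the main obstacle. You correctly observe that the Hodge--Tate weights of the loose lifts $L_n$ grow with $n$, so na\"ive estimates from $L_n$ fail to give a uniform radius. But your proposed remedy --- extracting bounds ``that depend only on the finite-height structure'' via $\varphi(\mathfrak M_n) \supset E(u)^h \mathfrak M_n$ --- does not work, because the $E(u)$-height $h$ of $\mathfrak M_n$ is exactly what records the Hodge--Tate weights and therefore itself grows with $n$. There is no intrinsic bound on the $\tau$-matrix coming from finite height alone that is independent of the height.

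The paper's resolution is more delicate and rests on two ideas you are missing. First, one must work not with an arbitrary Kisin model realized by a loose lift, but with the \emph{maximal} liftable model $\M_{(n)} \subset M_n$; maximality is what makes the models for varying $n$ compatible under the d\'evissage $\M_{(n)}^{i,i+1}$ into $\gs_1$-free pieces. Second --- and this is the heart of the matter --- the loose lifts are constructed so that their Hodge--Tate weights grow \emph{linearly}, $h_n = h_1 + (n-1)s$ with $s$ fixed. The linear increment $s$ (not the total height $h_n$) is what controls the gap between successive d\'evissage pieces: one proves $u^h \M_{(n)}^{i-1,i} \subset \M_{(n)}^{i,i+1}$ for a constant $h$ depending only on $p,e,f,d$ and \emph{not} on $n$. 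This single constant then propagates through a generator calculation to show that the $\varphi$- and $\tau$-matrices of a carefully chosen compatible system of bases lie in $\gs[\![p/u^\alpha]\!]$ and $W(R)[\![p/u^\alpha]\!]$ for a fixed $\alpha$. The uniform radius is thus purchased not from the height of $\M_n$ but from the bounded increment in the inductive construction of the lifts; your sketch does not contain this mechanism.
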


\begin{remark} \label{rem: intro}
\begin{enumerate}
  \item In the $(\varphi, \Gamma)$-module setting, the corresponding overconvergence theorem in \cite{Colmez-overcon} (see also \cite{BCfamily, Kednew}) is valid without assuming the residue field of $K$ is finite (\emph{i.e.}, they only need to assume $k$ perfect). In this paper, finiteness of $k$ is needed for our loose crystalline lifting theorems; in particular, we will need to consider nontrivial finite unramified extensions of $K$ (cf. \S \ref{sec-lift}), which are not well-defined if $k$ is infinite.

   \item  By using the ideas in the current paper, we can give a reproof of the overconvergence of \'etale $(\varphi, \Gamma)$-modules (when $k$ is finite); see upcoming \cite{GL16phiGamma}.

  \item In \cite[\S 6,2]{KL2}, Kedlaya and R. Liu formulated another approach to the overconvergence question for $(\varphi, \tau)$-modules. The approach does not assume the finiteness of $k$. As they pointed out in \cite[Rem. 6.2.8]{KL2}, it would be very interesting to compare their approach with ours.

  \item In Berger's recent work \cite{Ber-mulivariable} proving certain overconvergence result in the Lubin-Tate setting, he critically used the calculation (in the $F$-analytic setting) of ``locally analytic vectors" in \cite{BC16}. In fact, in \emph{loc. cit.}, the ``locally analytic vectors" in the $(\varphi, \tau)$-module setting were also calculated (see \cite[\S 4.4]{BC16}). It would be interesting to see if the approach of \cite{BC16, Ber-mulivariable} can also be used to prove the overconvergence of $(\varphi, \tau)$-modules.
\end{enumerate}
\end{remark}

\subsection{Why overconvergence?}
Our theorem is clearly an analogue of the result of Cherbonnier and Colmez \cite{Colmez-overcon}, where they show that the $(\varphi, \Gamma)$-module associated to $T$ is overconvergent. Note that, the category of \'etale $(\varphi, \Gamma)$-modules, similarly as the category of \'etale $(\varphi, \tau)$-modules, is also equivalent to the category of all $p$-adic $G_K$-representations. Indeed, this similarity is what prompts Caruso to ask the question in \cite[\S4]{Car2}. The overconvergence result of \cite{Colmez-overcon} plays a fundamental role in the application of $(\varphi, \Gamma)$-modules to study various problems, e.g., the proof of $p$-adic local monodromy theorem (\cite{Ber0}), and the study of $p$-adic local Langlands correspondence for $\GL_2(\Qp)$(cf. \cite{CDP} for the most recent advance), to name just a few. To study $p$-adic local Langlands correspondence for $\GL_2(F)$ where $F/\Qp$ is a finite extension, there has been very recent work to generalize \cite{Colmez-overcon} to the Lubin-Tate setting, culminating in Berger's proof (see \cite{Ber-mulivariable}, also \cite{FX}).

Considering the very useful applications of overconvergent $(\varphi, \Gamma)$-modules, it is natural to expect that our overconvergent $(\varphi, \tau)$-modules should also generate interesting applications.
In particular, one of the reasons that the overconvergence result of \cite{Colmez-overcon} is important, is because that it allows us to link $(\varphi, \Gamma)$-modules to Fontaine modules (see \cite{Ber0}). Namely, overconvergence helps to link the category of \emph{all} Galois representations to the category of \emph{geometric} (\emph{i.e.}, semi-stable, crystalline) representations. However, in our current paper, we already use such a link to prove our overconvergence result. Namely, we use crystalline representations to ``approximate" general Galois representations (see \S \ref{subsec strategy} for a sketch). We believe that our approach will shed some more light on the deeper meaning of overconvergence, and in particular, our overconvergent $(\varphi, \tau)$-modules could potentially have certain advantages (than the overconvergent $(\varphi, \Gamma)$-modules) in applications.
We will report some of our progress in future papers.

\subsection{Strategy of proof} \label{subsec strategy}
It turns out that our method is very different from that used in \cite{Colmez-overcon}. We start with a conjecture of crystalline lifts which predicts that every mod $p$ residual representation admits a crystalline lift (with non-positive Hodge-Tate weights). We can not fully prove this conjecture. But we prove a weaker version (see Theorem \ref{thm-potential-lift}) which states that for each  mod $p$ residual representation, after restricting to a finite unramified extension, the residual representation does admit a crystalline lift.
This allows us to construct (inductively on $n$) loose crystalline lifts $\wt L_n$ of $T_n:=T/p^nT$. In fact, if we let $-h_n$ denote the minimal Hodge-Tate weight of $\wt L_n$, then $h_{n}=h_{n-1}+s$ where $s$ is a constant positive number independent of $n$, \emph{i.e.}, the growth of $h_n$ is \emph{linear}!
We will see that the increasing rate of $h_n$ is closely related to the overconvergence radius; indeed, it makes it possible to study the relation between (maximal) Kisin models inside $M_n : = M/ p^n M$ for different $n$.
After carefully analyzing the structure of such Kisin models, we are able to construct an overconvergent basis inside the  $(\varphi, \tau)$-module, and use it to prove our final overconvergence theorem.

\subsection{Structure of the paper} In \S \ref{sec phi tau}, we collect some basic facts about various categories of modules (in particular, \'etale $(\varphi, \tau)$-modules). We define what it means for an \'etale $(\varphi, \tau)$-module to be overconvergent, and state our main theorem.
In \S \ref{sec-lift}, we show that all $p$-power-torsion representations admit \emph{loose} crystalline lifts.
In \S \ref{sec max kisin}, we study maximal liftable Kisin models, and in particular show that they are invariant under finite unramified base change.
In \S \ref{sec torsion thy}, we study Kisin models in \'etale $\varphi$-modules corresponding to $p^n$-torsion representation of $G_K$.
Finally in \S \ref{sec OC}, we prove our main theorem.

\subsection{Notations}

\subsubsection{Some notations in $p$-adic Hodge theory}
We put $R:=\varprojlim \limits_{x\to x^p} \O_{\overline K}/ p \O_{\overline K}$, equipped with its natural coordinate-wise action of $G_K$.
 There is a unique surjective projection map $\theta :
W(R) \to \widehat \O_{\overline K}$ to the $p$-adic completion $ \widehat \O_{\overline K}$ of
$\O_{\overline K}$, which lifts the  projection $R \to \O_{\overline K}/ p$
onto the first factor in the inverse limit. We denote by $A_{\t {cris}}$ the $p$-adic completion
of the divided power envelope of $W(R)$ with respect to
$\t{Ker}(\theta)$.
 As usual, we write $B_\t{cris}^+=
A_\t{cris}[1/p]$ and   $B_\t{dR}^+$ the $\t{Ker}(\theta)$-adic completion of $W(R)[1/p]$. For any subring $A \subset B^+_\t{dR}$, we define filtration on $A$ by  $\t{Fil}^i A = A \cap (\t{Ker}(\theta))^iB^+_\t{dR}$.

We fix a uniformizer  $\pi \in  \O_K$ and the Eisenstein polynomial $E(u) \in W[u]$ of $\pi$ in this paper. Define $\pi _n \in \overline K$ inductively such that $\pi_0 = \pi$ and $(\pi_{n+1})^p = \pi_n$. Define $\mu _n \in \overline K$ inductively such that $\mu_1$ is a primitive $p$-th root of unity and $(\mu_{n+1})^p = \mu_n$.
Set $K_{\infty} : = \cup _{n = 1} ^{\infty} K(\pi_n)$, $K_{p^\infty}=  \cup _{n=1}^\infty
K(\mu_{n})$,
and $\hat K:=  \cup_{n = 1} ^{\infty} K(\pi_n, \mu_n).$
Let $G_{\infty}:= \gal (\overline K / K_{\infty})$, $G_{p^\infty}:= \gal (\overline K / K_{p^\infty})$, $H_\infty: =\gal(\overline K/\hat K)$,
$H_{K}:= \gal (\hat K/ K_\infty)$, and $\hat G: =\gal (\hat K/K) $.
 Set  $\underline \varepsilon:= (\mu_{i}) _{i \geq 0} \in R$ and $t:= -\log([\underline \varepsilon])\in \acris$ as usual. We use $\varepsilon_p$ to denote the $p$-adic cyclotomic character.

Recall that $\{\pi_n\}_{n \geq 0}$ defines an element $\upi \in R$, and let $[\underline \pi ]\in W(R)$ be the Techm\"uller representative.
Recall that $\gs  = W[\![u]\!]$ with Frobenius extending the arithmetic Frobenius on $W(k)$ and $\varphi (u ) = u ^p$. 
We can embed the $W(k)$-algebra $W(k)[u]$ into
$W(R)\subset\acris$ by the map $u\mapsto [\underline \pi]$.   This
embedding extends to the  embedding $\gs \inj W(R)$ which is compatible with
Frobenious endomorphisms.

We denote  by $S$  the $p$-adic completion of the divided power
envelope of $W(k)[u]$ with respect to the ideal generated by $E(u)$. Write $S_{K_0}:= S[\frac{1}{p}]$.
There is a unique map (Frobenius) $\varphi_S: S \to S$ which extends
the Frobenius on $\gs$. 
Let  $\Fil ^n S\subset S $ be the $p$-adic completion of the ideal generated by $\gamma_i (E(u)):= \frac{E(u)^i}{i!}$ with $ i \geq n$. One can show that the embedding $W(k)[u] \to W(R)$ via $u \mapsto [\upi]$ extends to the embedding $S \inj A_\cris$ compatible with Frobenius $\varphi$ and filtration (note that $E([\upi])$ is a generator of $\Fil ^1 W(R)$).

As a subring of $\acris$, $S$ is not stable under the action of $G_K$,
though $S$ is fixed by $G_\infty$. Define a subring inside $B^+_\cris$:
$$\R_{K_0}: =\left\{x = \sum_{i=0 }^\infty f_i t^{\{i\}}, f_i \in S_{K_0} \t{
and } f_i \to 0\t{ as }i \to +\infty \right\}, $$ where $t ^{\{i\}}= \frac{t^i}{p^{\tilde q(i)}\tilde q(i)!}$ and $\tilde q(i)$ satisfies $i = \tilde q(i)(p-1) +r(i)$ with $0 \leq r(i )< p-1$. Define $\hR := W(R)\cap \R_{K_0}$. One
can show that $\R_{K_0}$ and $\hR$ are stable under the $G_K$-action and the $G_K$-action factors through  $\hat G$ (see \cite[\S 2.2]{liu4}). Let $I_+R$ be the maximal ideal of $R$ and $I_+ \hR = W(I_+R) \cap \hR$. By\cite[Lem. 2.2.1]{liu4}, one has
$\hR / I_+\hR\simeq \gs/ u \gs  = W(k)$.

\subsubsection{Fontaine modules and Hodge-Tate weights}
When $V$ is a semi-stable representation of $G_K$, we let $D_{\textnormal{st}} (V):  = (B_{\st} \otimes_{\Qp} V^{\vee})^{G_K}$ where $V^{\vee}$ is the dual representation of $V$. The Hodge-Tate weights of $V$ are defined to be $i \in \mathbb Z$ such that $\gr^i D_{\st}(V) \neq 0$.  For example, for the cyclotomic character $\varepsilon_p$, its Hodge-Tate weight is $\{ 1\}$.

Note here that we are using a contravariant Fontaine functor, which is the more usual convention in integral $p$-adic Hodge theory (e.g., as in \cite{liu2}). Later in our current paper, we will construct several covariant functors for our convenience in this paper.

\subsubsection{Some other notations}

Throughout this paper, we reserve $\varphi$ to denote Frobenius operator. We sometimes add subscripts to indicate on which object Frobenius is defined. For example, $\varphi_\M$ is the Frobenius defined on $\M$. We always drop these subscripts if no confusion will
arise. Let $A$ be a ring endowed with Frobenius $\varphi_A$ and  $M$  a module over $A$. We always denote $\varphi ^*M := A \otimes_{ \varphi_A, A } M$. Note that if $M$ has a $\varphi _A$-semi-linear endomorphism $\varphi_M: M \to M$ then $1 \otimes \varphi_M : \varphi ^* M \to M$ is an $A$-linear map. 
We also reserve $v$ to denote valuations which are normalized so that $v(p)=1$.
Finally $\Md (A)$ always denotes the ring of $d \times d$-matrices with entries in $A$ and $I_d$ denotes the $d \times d $-identity matrix.

\subsection*{Acknowledgement} It is a great pleasure to thank Laurent Berger, Xavier Caruso, Kiran Kedlaya and Ruochuan Liu
for very useful conversations and correspondences. We would like to thank Toby Gee and David Savitt for allowing us to use results (mainly in our \S \ref{sec-lift}) from an unpublished draft \cite{GLS-unp}(which is collaborated work with the second author). We also thank the anonymous referee(s) for helping to improve the exposition.

\section{$(\varphi, \tau)$-modules and $(\varphi, \hat G )$-modules} \label{sec phi tau}
In this section, we first collect some basic facts on (integral and torsion) \'etale $\varphi$-modules, \'etale $(\varphi, \tau)$-modules, Kisin modules, $(\varphi, \hat G)$-modules and their attached representations. In particular, we also provide a covariant theory which will be more convenient for our paper. Then, we define what it means for an \'etale $(\varphi, \tau)$-module to be overconvergent, and state our main theorem.

\subsection{\'Etale $\varphi$-modules and $(\varphi, \tau)$-modules}

Recall that $\O _\E$ is the $p$-adic completion of $\gs[1/u]$.  
Our fixed embedding $\gs\hookrightarrow W(R)$ determined by  $\upi$
uniquely extends to a $\varphi$-equivariant embedding $\iota:\O_{\E}\hookrightarrow W(\t{Fr} R)$ (here $\t{Fr} R$ denotes the fraction field of $R$), and we identify $\O_{\E}$ with its image in $W(\t{Fr} R)$.
We note that $\O_\E$ is a complete discrete valuation ring with uniformizer $p$ and residue field
$k (\!(\upi)\!)$ as a subfield of $\Fr R$. Let $\E$ denote the fractional field of $\O_\E$,  $\E ^\ur$ the maximal unramified extension of $\E$ inside $W(\t{Fr} R )[\frac 1 p ]$ and $\O_{\E ^\ur}$ the ring of integers. Set $\O_{\widehat{\E} ^\ur}$ the $p$-adic completion of $\O_{\E ^\ur}$ and  $\gs ^\ur : = W(R) \cap \O _{\widehat \E^\ur}$.

\begin{defn}
Let $'\t{Mod}_{\O_\E}^\varphi$ denote the category of finite type $\O_\E$-modules $M $  equipped with a $\varphi_{\O_\E}$-semi-linear endomorphism $\varphi _M : M\to M$ such that $1 \otimes \varphi : \varphi ^*M \to M $ is an isomorphism. Morphisms in this category  are just $\O_\E$-linear maps compatible with $\varphi$'s. We call objects in $'\t{Mod}_{\O_\E}^\varphi$ {\em \'etale $\varphi$-modules}.
\end{defn}

\subsubsection{} Let $'\t{Rep}_{\Z_p}(G_{\infty}) $ (resp. $'\t{Rep}_{\Z_p}(G_{K}) $ ) denote the category of finite type $\Z_p$-modules $T$   with a continuous $\Z_p$-linear $G_{\infty}$ (resp. $G_K$)-action.
For $M $ in $'\t{Mod}_{\O_\E}^\varphi$, define
$$ V(M):= ( \O _{\widehat \E ^\ur} \otimes_{\O_\E} M) ^{\varphi =1}.  $$
For $T $ in  $'\t{Rep}_{\Z_p}(G_{\infty}) $, define
$$ \underline M(T):= ( \O _{\widehat \E ^\ur} \otimes_{\Z_p} T) ^{G_\infty}. $$

\begin{thm}[{\cite[Prop. A 1.2.6]{fo4}}] \label{thmFon}
The functors $V$ and $\underline M$ induces an exact tensor equivalence between the categories $'\t{Mod}_{\O_\E} ^\varphi$  and $'\t{Rep}_{\Z_p} (G_{\infty})$.
\end{thm}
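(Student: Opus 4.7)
The plan is to exhibit $V$ and $\underline{M}$ as quasi-inverse equivalences by first proving a natural base change isomorphism and then taking appropriate invariants. Explicitly, I would first establish that for every $M \in {}'\t{Mod}_{\O_\E}^\varphi$ there is a natural $G_\infty$- and $\varphi$-equivariant isomorphism
$$ \O_{\widehat \E^\ur} \otimes_{\O_\E} M \;\simeq\; \O_{\widehat \E^\ur} \otimes_{\Z_p} V(M), $$
and symmetrically for $T \in {}'\t{Rep}_{\Z_p}(G_\infty)$ an isomorphism
$$ \O_{\widehat \E^\ur} \otimes_{\Z_p} T \;\simeq\; \O_{\widehat \E^\ur} \otimes_{\O_\E} \underline{M}(T). $$
Granting these, the equivalence follows by taking $G_\infty$-invariants (respectively $\varphi$-invariants) and invoking the identifications $(\O_{\widehat \E^\ur})^{G_\infty} = \O_\E$ and $(\O_{\widehat \E^\ur})^{\varphi=1} = \Z_p$. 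Exactness of $V$ and $\underline{M}$ is then automatic once they preserve length/rank (via the five-lemma), and tensor compatibility is formal from the same property of base change along $\O_{\widehat \E^\ur}$.

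First I would reduce to the mod-$p$ case by d\'evissage. Since $\O_\E$ is a complete discrete valuation ring with uniformizer $p$, any finite type \'etale $\varphi$-module is either $p^n$-torsion or finite free, and the short exact sequences $0 \to pM \to M \to M/pM \to 0$ combined with the $p$-torsion-freeness of $\O_{\widehat \E^\ur}$ allow a five-lemma induction, reducing the base change isomorphism to the case of an \'etale $\varphi$-module $\bar M$ over $k(\!(\upi)\!) = \O_\E/p$. The finite free case is subsequently recovered via the $p$-adic limit $M = \varprojlim_n M/p^n M$, using $p$-adic completeness of $\O_{\widehat \E^\ur}$ to commute the limit with taking $\varphi$-invariants.

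The heart of the argument is then the mod-$p$ statement. Choosing a basis of $\bar M$, the Frobenius is encoded by a matrix $A \in \GL_d(k(\!(\upi)\!))$, and the base change isomorphism reduces to solving $\varphi(X) = A X$ in $\GL_d$ of some separable extension of $k(\!(\upi)\!)$. This is an instance of Lang/Artin--Schreier surjectivity for $\GL_d$ in characteristic $p$, yielding a solution $X$ unique up to right multiplication by $\GL_d(\BF_p)$, whose columns are a candidate $\BF_p$-basis of $V(\bar M)$. To see that these solutions actually live in $\O_{\widehat \E^\ur}/p$ and behave $G_\infty$-equivariantly, I would invoke Fontaine--Wintenberger's norm field theory applied to the strictly APF tower $K_\infty/K$: it produces a canonical isomorphism $\Gal(k(\!(\upi)\!)^\sep/k(\!(\upi)\!)) \simeq G_\infty$ together with a compatible embedding $k(\!(\upi)\!)^\sep \hookrightarrow \Fr R$ whose ring of integers sits inside $\O_{\widehat \E^\ur}/p$. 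This both pins $V(\bar M)$ down as a $\BF_p$-space of dimension $\dim_{k(\!(\upi)\!)} \bar M$ and delivers the mod-$p$ base change isomorphism $G_\infty$- and $\varphi$-equivariantly.

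The main obstacle is this norm field input itself, i.e.\ Fontaine--Wintenberger's theorem pinning down $\Gal(k(\!(\upi)\!)^\sep/k(\!(\upi)\!)) \simeq G_\infty$ together with the compatible embedding into $\Fr R$. Every other ingredient (Lang/Artin--Schreier for $\GL_d$, the d\'evissage on torsion order, $p$-adic limits, and the exactness/tensor formalities) is routine once this input is in place; together they constitute the content of \cite[Prop.~A 1.2.6]{fo4}.
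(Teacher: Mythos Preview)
The paper does not supply a proof of this statement at all: it is recorded as a theorem with a direct citation to \cite[Prop.~A~1.2.6]{fo4}, and the surrounding text immediately moves on to use it. So there is nothing in the paper to compare your argument against.

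That said, your outline is the standard route to Fontaine's result and is essentially correct. The reduction by d\'evissage to the $p$-torsion case, the Lang-style solution of $\varphi(X)=AX$ over $k(\!(\upi)\!)^{\sep}$, and the passage back to finite free modules by $p$-adic completeness are all sound. You have also correctly isolated the one genuinely nontrivial input: the Fontaine--Wintenberger identification of $G_\infty$ with $\Gal(k(\!(\upi)\!)^{\sep}/k(\!(\upi)\!))$, which is exactly what makes $\O_{\widehat\E^\ur}/p$ coincide with $k(\!(\upi)\!)^{\sep}$ and gives the $G_\infty$-equivariance. One small refinement: for exactness you should also note that $H^1(G_\infty,\O_{\widehat\E^\ur}/p^n)=0$ (Hilbert~90 type vanishing, again reduced to the separable closure in characteristic $p$), since the five-lemma argument for $\underline M$ needs surjectivity of taking $G_\infty$-invariants on short exact sequences, not just a length count.
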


For later use, we also record the following lemma.
\begin{lemma} \label{Eq-V(M)}
Let $M$ be an \'etale $\varphi$-module, then
$$V(M) = ( M \otimes_{\O_\E} W(\t{Fr} R)) ^{\varphi =1}.$$
\end{lemma}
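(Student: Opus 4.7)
The plan is to reduce the claim to Fontaine's equivalence (Theorem \ref{thmFon}) combined with the fact that $W(\Fr R)^{\varphi=1} = \Zp$.

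First, the tensor equivalence of Theorem \ref{thmFon} provides a canonical $\varphi$-equivariant isomorphism $\O_{\widehat \E^\ur} \otimes_{\Zp} V(M) \simeq \O_{\widehat \E^\ur} \otimes_{\O_\E} M$. Base-changing along the embedding $\O_{\widehat \E^\ur} \hookrightarrow W(\Fr R)$, this yields a $\varphi$-equivariant isomorphism
$$W(\Fr R) \otimes_{\Zp} V(M) \;\simeq\; W(\Fr R) \otimes_{\O_\E} M.$$
Taking $\varphi=1$ invariants on both sides reduces the lemma to the identity $\bigl(W(\Fr R) \otimes_{\Zp} V(M)\bigr)^{\varphi=1} = V(M)$.

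Second, since $V(M)$ is a finite type $\Zp$-module, I would decompose it via the structure theorem into cyclic summands of the form $\Zp$ and $\Zp/p^n\Zp$, and use that taking $\varphi$-invariants commutes with finite direct sums. The identity then reduces to showing $W(\Fr R)^{\varphi=1} = \Zp$ together with $W_n(\Fr R)^{\varphi=1} = \Z/p^n\Z$ for each $n \geq 1$. The former is immediate from $(\Fr R)^{\varphi=1} = \Fp$ (since the equation $x^p = x$ has at most $p$ solutions in a field). The latter follows from the Artin--Schreier--Witt exact sequence
$$0 \to \Z/p^n\Z \to W_n(\Fr R) \xrightarrow{\varphi - 1} W_n(\Fr R) \to 0,$$
whose surjectivity is standard given that $\Fr R$ is algebraically closed.

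I do not expect a real obstacle here: once Theorem \ref{thmFon} is in hand, the lemma is essentially a coefficient extension from $\O_{\widehat \E^\ur}$ to the larger period ring $W(\Fr R)$. The only non-formal input is the Artin--Schreier--Witt surjectivity of $\varphi - 1$ on $W_n(\Fr R)$; the mildest point of care is to confirm that $\varphi$-invariants commute with the direct sum decomposition of $V(M)$, which is immediate from left-exactness of invariants.
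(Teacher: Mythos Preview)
Your proposal is correct. The paper's proof is organized a bit differently: rather than invoking the comparison isomorphism $\O_{\widehat\E^\ur}\otimes_{\Zp} V(M)\simeq \O_{\widehat\E^\ur}\otimes_{\O_\E} M$ in full generality and then decomposing $V(M)$ into cyclic summands, the paper sets $V'(M):=(M\otimes_{\O_\E} W(\Fr R))^{\varphi=1}$, notes the obvious inclusion $V(M)\subset V'(M)$, reduces to the $p^n$-torsion case, and then performs d\'evissage on the \emph{module} side (via a short exact sequence $0\to M'\to M\to M''\to 0$ with $M''$ killed by $p$) to reduce to $n=1$, where the comparison isomorphism over $k(\!(u)\!)^{\sep}$ is quoted and base-changed to $\Fr R$. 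Both arguments bottom out at $(\Fr R)^{\varphi=1}=\Fp$; your approach is arguably cleaner in that it uses the comparison isomorphism once globally and handles the d\'evissage on the representation side via the structure theorem for finite $\Zp$-modules, whereas the paper's organization stays on the module side and needs only the $n=1$ comparison as input.
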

\begin{proof}
Set $V' (M) : = ( M \otimes_{\O_\E} W(\t{Fr} R)) ^{\varphi =1}$. It is obvious that $V(M) \subset  V'(M)$ as $\O_{\widehat \E^\ur} \subset W(\FrR)$. To show that $V(M) = V'(M)$ we easily reduce to the case that $M$ is killed by $p ^n$. By standard d\'evissage procedure, we can produce an exact sequence
$$ 0 \to M ' \to M \to M'' \to 0 $$
with $M'$ killed by $p ^{n -1}$  and $M''$ killed by $p$. By diagram chasing, to show that $V(M) \supset  V'(M)$, it suffices to prove the case when $n = 1$. Recall that $\O_{\widehat \E^\ur} / p = k(\!(u)\!) ^{\t{sep}}$ the separable closure of $k (\!(u)\!)$. It is well-known that (indeed, by \ref{thmFon})
$$V(M)\otimes_{\BF_p} k(\!(u)\!) ^{\t{sep}} \simeq M \otimes_{k (\!(u)\!)} k(\!(u)\!) ^{\t{sep}}  $$
compatible with $\varphi$-actions. So $V(M)\otimes_{\BF_p} \t{Fr}  R \simeq M \otimes_{k (\!(u)\!)}  \t{Fr} R $ compatible with $\varphi$-actions. Taking $\varphi$-invariants on both sides, since $(\t{Fr} R) ^{\varphi= 1 } = \BF_p$, we see that $V(M) = V'(M)$.
\end{proof}

Now let us recall the theory  of $(\varphi, \tau)$-modules as in \cite{Car2}. Let $H_\infty = \gal (\overline K / \hat K)$ and $F_\tau : = (\t{Fr} R)^{H_\infty}$. As a subring of $W(\FrR)$,  $W(F_\tau)$ is stable on $G_K$-action and the action factors through $\hat G$.

\begin{defn} \label{defn phi tau mod}
An \'etale \emph{$(\varphi, \tau)$-module} is a triple $(M, \varphi_M, \hat G)$ where
\begin{itemize}\item $(M , \varphi_M)$ is an \'etale $\varphi$-module;
\item $\hat G$ is a continuous $W(F_\tau)$-semi-linear $\hat G$-action on $\hat M : =W(F_\tau) \otimes_{\O_\E} M$, and $\hat G$ commutes with $\varphi_{\hat M}$ on $\hat M$, \emph{i.e.,} for
any $g \in \hat G$, $g \varphi_{\hat M} = \varphi_{\hat M} g$;
\item regarding $M$ as an $\O_\E $-submodule in $ \hat M $, then $M
\subset \hat M ^{H_{K}}$.
\end{itemize}
\end{defn}

\begin{remark}
Note that our definition of \'etale $(\varphi, \tau)$-modules is slightly different from \cite[Definition 1.7]{Car2} used by Caruso (also note that Caruso simply calls them $(\varphi, \tau)$-modules without the term ``\'etale").
When $\hat G \simeq G_{p ^\infty} \rtimes H_K$ (e.g. when $p \not =2$ assumed in \cite{Car2}), since $G_{p^\infty}\simeq \Z_p (1)$,  we can always pick a topological generator $\tau \in G_{p ^\infty}$. Then the last two axioms in the above definition can be reinterpreted as axioms for $\tau$ as in \cite[Definition 1.7]{Car2}.
In such situations, it is easy to show that our definition is equivalent to \textit{loc. cit.}.
\end{remark}

Given an \'etale $(\varphi, \tau)$-modules $\hat M = (M, \varphi_M, \hat G)$, we define
$$ \cT^* (\hat M):= ( W(\FrR ) \otimes _{\O_\E} M) ^{\varphi=1} = \left ( W(\FrR) \otimes_{W(F_\tau)} \hat M  \right )^{\varphi =1}.$$
Note that $W(\FrR) \otimes_{\O_\E} M$ has a $G_K$-action, which is induced from that on $\hat M = W(F_\tau) \otimes_{\O_\E} M$. Since $G_K$-action commutes with $\varphi$, $\cT^*(\hat M)$ is a representation of $G_K$.

\begin{prop}\label{prop-cT*}
Notations as the above. Then
\begin{enumerate} \item $\cT^*(\hat M)|_{G_\infty} \simeq V(M)$.
\item The functor $\cT^*$ induces an equivalence between the category of $(\varphi, \tau)$-modules and the category $'\t{Rep}_{\Z_p}(G_{K}) $.
\end{enumerate}
\end{prop}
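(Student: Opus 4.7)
For part (1), the plan is to unwind the definitions. The $G_\infty$-action on $\cT^*(\hat M)$ is inherited from the natural $G_K$-action on $W(\FrR) \otimes_{\O_\E} M$, where $W(\FrR)$ carries its usual action and the $M$-factor receives the one restricted from $\hat M$. Under the quotient $G_K \twoheadrightarrow \hat G = \Gal(\hat K/K)$, the subgroup $G_\infty$ surjects onto $\Gal(\hat K/K_\infty) = H_K$, and the defining axiom $M \subset \hat M^{H_K}$ then forces $G_\infty$ to act trivially on $M$. Combined with Lemma~\ref{Eq-V(M)}, which gives $V(M) = (M \otimes_{\O_\E} W(\FrR))^{\varphi=1}$, this yields an identification of sets whose $G_\infty$-actions on both sides come solely from the $G_\infty$-action on the $W(\FrR)$-factor.

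For part (2), I would construct an explicit quasi-inverse functor $D$. Given $T \in {}'\t{Rep}_{\Z_p}(G_K)$, set
\[
M(T):=\bigl(\O_{\widehat \E^\ur}\otimes_{\Z_p}T\bigr)^{G_\infty}, \qquad \hat M(T):=\bigl(W(\FrR)\otimes_{\Z_p}T\bigr)^{H_\infty}.
\]
Theorem~\ref{thmFon} guarantees that $M(T)$ is an \'etale $\varphi$-module, while $\hat M(T)$ is automatically a $W(F_\tau)$-module carrying a continuous semi-linear action of $G_K/H_\infty = \hat G$ that commutes with $\varphi$. The bridge between the two pieces is the scalar-extension isomorphism
\[
W(\FrR)\otimes_{\O_\E} M(T)\;\xrightarrow{\ \sim\ }\;W(\FrR)\otimes_{\Z_p}T,
\]
deduced by base change from the isomorphism $\O_{\widehat \E^\ur}\otimes_{\O_\E} M(T)\simeq \O_{\widehat \E^\ur}\otimes_{\Z_p}T$ underlying Theorem~\ref{thmFon}. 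Taking $H_\infty$-invariants and using $W(\FrR)^{H_\infty} = W(F_\tau)$ identifies $\hat M(T)$ with $W(F_\tau)\otimes_{\O_\E} M(T)$, and the third axiom $M(T)\subset \hat M(T)^{H_K}$ is automatic since $M(T)$ consists of $G_\infty$-invariant elements. Let $D(T)$ denote the resulting triple.

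Finally I would verify that $\cT^* \circ D \simeq \t{id}$ and $D \circ \cT^* \simeq \t{id}$. The first is immediate from the isomorphism above together with $W(\FrR)^{\varphi=1}=\Z_p$, which give $\cT^*(D(T))=T$ as $\Z_p[G_K]$-modules. For the second, part (1) combined with Theorem~\ref{thmFon} recovers $M$ from $\cT^*(\hat M)|_{G_\infty}\simeq V(M)$, and the same base-change-and-descent argument recovers the $W(F_\tau)$-module $\hat M$ together with its $\hat G$-action. The principal technical obstacle, in my view, is the Galois descent identity $\bigl(W(\FrR)\otimes_{\O_\E}N\bigr)^{H_\infty}=W(F_\tau)\otimes_{\O_\E}N$ for finite (possibly torsion) $\O_\E$-modules $N$; the plan is to reduce by d\'evissage to the case $pN=0$ and then invoke the fact that $\FrR/F_\tau$ behaves as a Galois extension of fields with group $H_\infty$, parallel to the descent input underlying Theorem~\ref{thmFon}.
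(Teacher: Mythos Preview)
Your proposal is correct and follows essentially the same route as the paper. For (1) the paper says only ``follows from Lemma~\ref{Eq-V(M)} and the definition of \'etale $(\varphi,\tau)$-modules,'' which is exactly your unwinding; for (2) the paper writes down the same comparison isomorphism $W(\FrR)\otimes_{\O_\E}M\simeq W(\FrR)\otimes_{\Z_p}\cT^*(\hat M)$, observes it is $G_K$-equivariant, and then recovers $M$ and $\hat M$ by the same invariant formulas you use for your quasi-inverse $D$. The only difference is cosmetic: the paper phrases the conclusion as ``fully faithful and essentially surjective'' rather than exhibiting $D$ and checking both composites, and it silently uses the descent identity $(W(\FrR)\otimes_{\O_\E}N)^{H_\infty}=W(F_\tau)\otimes_{\O_\E}N$ that you correctly isolate as the one point needing care.
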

\begin{proof}
(1) follows from Lemma \ref{Eq-V(M)} and the definition of \'etale $(\varphi, \tau)$-modules.

(2) For an \'etale $\varphi$-module $M$, we have a natural isomorphism $$ \O_ {\widehat \E^\ur} \otimes _{\O_\E} M \simeq \O_ {\widehat \E^\ur} \otimes_{\Z_p} V(M)$$ which is compatible with $G_\infty$-action and $\varphi$-action on both sides. Tensoring $W(\Fr R)$ on both sides and using (1),  we have
$$ W(\Fr R) \otimes _{\O_\E} M \simeq W (\Fr R) \otimes_{\Z_p} V(M) \simeq W(\FrR) \otimes_{\Z_p} \cT ^*(\hat M)$$
By the construction of $\cT^*(\hat M)$, it is clear the above isomorphism is compatible with $G_K$-actions on both sides, where the $G_K$-action on the left is induced from that on $\hat M$. In particular, we can recover $M$ and $\hat M$ via the formula: $$ M = \left ( \O_{\widehat \E^\ur} \otimes_{\Z_p}V(M) \right ) ^{G_\infty} \ \text{ and } \  \hat M = \left (W(\FrR) \otimes_{\Z_p} \cT ^*(\hat M) \right ) ^{H_\infty} $$
Then we can easily show that the functor $\cT^*$ is fully faithful and essentially surjective.
\end{proof}

\subsection{Kisin modules and $(\varphi, \hat G)$-modules}\label{subsect-etale}
\begin{defn} For a nonnegative integer $r$,
we write $'\sfi$  for the category of finite-type $\gs$-modules $\M$ equipped with
a $\varphi_{\gs}$-semilinear endomorphism $\varphi_\M : \M \to \M$ satisfying
\begin{itemize}
	\item the cokernel of the linearization $1\otimes \varphi: \varphi ^*\M \to \M$ is killed by $E(u)^r$;
	\item the natural map $\M \to \O_\E \otimes_{\gs} \M$ is injective.
\end{itemize}
 Morphisms in $'\sfi$ are $\varphi$-compatible $\gs$-module homomorphisms.
\end{defn}
We call objects in $'\sfi$ \emph{Kisin module of $E(u)$-height $r$}. The category of {\em finite free Kisin modules of $E(u)$-height $r$},
denoted $\sfi$, is the full subcategory of $'\sfi$ consisting
of those objects which are finite free over $\gs$. We call an object $\M \in {'\sfi}$ a {\em torsion Kisin module of $E(u)$-height $r$} if $\M$ is killed by $p^n$ for some $n$. Since $E(u)$ is always fixed in this paper, we often drop $E(u)$ from the above notions.

For any finite free Kisin module $\M \in \t{Mod}_{\gs} ^{\varphi, r}$, we define $$T_\gs (\M) : = \Hom_{\gs, \varphi} (\M , W(R)).$$ See \cite[\S2.2]{liu2} for more details on $T_\gs$. In particular, $T_\gs (\M)$ is a finite free $\Z_p$-representation of $G_\infty$ and $\rank_{\Z_p} T_\gs(\M) = \rank_\gs (\M)$.

Now let us review the theory of $(\varphi, \hat G)$-modules.
\begin{defn}
Following \cite{liu4}, a finite free (resp. torsion) \emph{$(\varphi, \hat
G)$-module of height $ r$} is a triple $(\M , \varphi, \hat G)$ where
\begin{enumerate}
\item $(\M, \varphi_\M)\in {'\sfi}$ is a finite free (resp. torsion) Kisin module of height $ r$;
\item $\hat G$ is a continuous $\hR$-semi-linear $\hat G$-action on $\hat \M: =\hR
\otimes_{\varphi, \gs} \M$;
\item $\hat G$ commutes with $\varphi_{\hM}$ on $\hM$, \emph{i.e.,} for
any $g \in \hat G$, $g \varphi_{\hM} = \varphi_{\hM} g$;
\item regard $\M$ as a $\varphi(\gs)$-submodule in $ \hM $, then $\M
\subset \hM ^{H_{K}}$;
\item $\hat G$ acts on $W(k)$-module $M:= \hM/I_+\hR\hM\simeq \M/u\M$ trivially.
\end{enumerate}
 Morphisms between  $(\varphi, \hat G)$-modules are morphisms of Kisin modules that commute with $\hat G$-action  on $\hM$'s.
\end{defn}

\begin{remark}\label{rm-issues}
When $\M$ is a torsion or finite free Kisin module, then the natural map $\M \to \hM $ induced by $\M \mapsto 1\otimes_{\varphi, \gs} \M$ is injective (see \cite[Lem.3.1.2]{liu-car2} and discussion above the lemma). So it makes sense to regard  $\M$ as a $\varphi(\gs)$-submodule of $\hM$ (item (4) in above definition). Also, the topology on $\hM$ is induced by the (weak) topology of $W(R)$ which is compatible with the $p$-adic topology of $A_\cris$.
\end{remark}

\subsubsection{}
  For a finite free  $(\varphi, \hat G)$-module  $\hM=(\M, \varphi, \hat G)$,
  we can associate a $\Z_p[G_K]$-module:
\begin{equation}\label{hatT}
\hat T (\hM) := \t{Hom}_{\hR, \varphi}(\hR \otimes_{\varphi, \gs}
\M, W(R)),
\end{equation}
where $G_K$ acts on $\hat T(\hM)$ via $g (f)(x) = g (f(g^{-1}(x)))$
for any $g \in G_K$ and $f \in \hat T(\hM)$.

If $T$ is a finite free $\Z_p$-representation of $G_\infty$ or $G_K$ we denote $T^\v: = \Hom_{\Z_p} (T, \Z_p)$ the dual representation of $T$. We fix  $\gt \in W(R)$ so that $\varphi (\gt) = c_0 ^{-1}E(u) \gt$ and $\gt \not = 0 \mod p$, where $c_0=\frac{E(0)}{p}$. Such $\gt$ exists and is unique up to multiplying $\Z_p ^\times$ (see \cite[Example 2.3.5]{liu2}). With these notations, we have the following:

\begin{theorem}[\cite{liu4}]\label{thm-review}
\begin{enumerate}
\item  $\hat T$ induces an anti-equivalence between the category of finite free
$(\varphi, \hat G)$-modules of height $r$ and the category of
$G_K$-stable $\Z_p$-lattices in semi-stable representations of $G_K$
with Hodge-Tate weights in $\{0, \dots, r\}$.
\item $\hat T $ induces a natural $W(R)$-linear injection
\begin{equation}\label{Eq: iota}
\hat \iota: \  W(R)\otimes_{\varphi, \gs} \M  \longrightarrow \hat T^\v
(\hat \M) \otimes_{\Z_p} W(R),
\end{equation}
 which is  compatible with Frobenius and $G_K$-actions on both sides (here, we use $\hat T^\v
(\hat \M)$ to mean the dual of $\hat T^(\hat \M)$). Moreover,
 $$(\varphi(\gt))^r (\hat T^\v(\hM) \otimes
_{\Z_p}W(R)) \subset\hat \iota( W(R)\otimes_{\varphi, \gs} \M).$$
\item There exists a natural isomorphism $T_\gs(\M) \ito \hat T (\hM)$ of $\Z_p[G_\infty]$-modules.
\end{enumerate}
\end{theorem}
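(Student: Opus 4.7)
The plan is to follow the strategy of Liu's \cite{liu4}, establishing the three parts in the order (3), (2), (1), each building on the previous. For Part (3), I would construct an explicit natural map
\[\Psi\colon T_\gs(\M) \longrightarrow \hat T(\hM), \quad f\mapsto \tilde f,\]
where $\tilde f(a \otimes m) := a \cdot \varphi(f(m))$. A direct check shows that $\tilde f$ is well-defined on $\hR \otimes_{\varphi, \gs} \M$ (the Frobenius twist on the tensor product absorbs the relation $a\varphi(s) \otimes m = a \otimes sm$), that it is $\hR$-linear, and that it is $\varphi$-compatible. Since $\varphi$ is an automorphism of $W(R)$ ($R$ being perfect), the inverse sends $g$ to $m \mapsto \varphi^{-1}(g(1\otimes m))$, which is readily seen to be $\gs$-linear and $\varphi$-compatible. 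Equivariance under $G_\infty$ is clear since $G_\infty$ fixes $[\upi]$ and hence fixes $\gs \subset W(R)$ pointwise.

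For Part (2), I would build $\hat \iota$ by tensoring the standard evaluation pairing $W(R) \otimes_{\varphi, \gs} \M \to T_\gs(\M)^\vee \otimes_{\Z_p} W(R)$ with $W(R)$ and identifying $T_\gs(\M) \simeq \hat T(\hM)$ via Part (3). The divisibility bound $(\varphi(\gt))^r \cdot (\hat T^\vee(\hM) \otimes_{\Z_p} W(R)) \subset \hat \iota(W(R) \otimes_{\varphi, \gs} \M)$ is the key quantitative input; it traces back to the $E(u)^r$-height condition on $\M$ together with the identity $\varphi(\gt) = c_0^{-1} E(u) \gt$. Concretely, the pairing becomes an isomorphism after inverting $\varphi(\gt)$, and $(\varphi(\gt))^r$ annihilates the cokernel by a linear-algebra argument on a presentation matrix for $1\otimes\varphi \colon \varphi^*\M \to \M$.

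Turning to Part (1): faithfulness is immediate from Part (3) combined with Fontaine's equivalence (Theorem \ref{thmFon}), which recovers the \'etale $\varphi$-module $\O_\E \otimes_\gs \M$ from $\hat T(\hM)|_{G_\infty}$ and hence recovers $\M$ itself. Fullness reduces to showing that a morphism of the underlying Kisin modules is automatically $\hat G$-equivariant on $\hM$ whenever the induced map on $\hat T$'s is $G_K$-equivariant; this follows from the injectivity of $\hat \iota$ in Part (2). Essential surjectivity is where the principal difficulty lies: given a $G_K$-stable lattice $L$ in a semi-stable representation with Hodge-Tate weights in $[0,r]$, Kisin's theorem produces a finite free Kisin module $\M$ of height $r$ compatible with $L^\vee$ as a $G_\infty$-representation, and one must descend the $G_K$-action on $W(R) \otimes_{\Z_p} L^\vee$ (transported through $\hat \iota$) to the subobject $\hR \otimes_{\varphi, \gs} \M$. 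The hard part is precisely this stability check; it requires delicate estimates in $\acris$, using the axioms that $\hat G$ acts trivially on $\hM/I_+\hR\hM$ and the characterization $\hR = W(R) \cap \R_{K_0}$ to pin down the target of each $g \in \hat G$ on elements of $\M$.
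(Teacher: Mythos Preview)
The paper does not prove this theorem at all: it is stated with the citation \cite{liu4} and used as a black box from the literature, with no proof environment following it. So there is nothing in the present paper to compare your proposal against.

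That said, your outline is a reasonable sketch of how the argument in \cite{liu4} actually proceeds. One small correction: in Part (1), faithfulness does not quite recover $\M$ from $\O_\E \otimes_\gs \M$ alone, since an \'etale $\varphi$-module can have many Kisin models; what is true is that the map $\M \to \O_\E \otimes_\gs \M$ is injective (axiom of $'\sfi$), and this together with the natural compatibility is what gives faithfulness. Also, your description of essential surjectivity is honest about where the real work lies, but be aware that the ``delicate estimates in $\acris$'' you allude to occupy the bulk of \cite{liu4} and are not something one can fill in routinely; the axiom that $\hat G$ acts trivially modulo $I_+\hR$ is an \emph{output} of that construction, not an input available for the stability check.
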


\subsection{Covariant theory}
Note that $T_\gs$ and $\hat T$ are contravariant functors which is not convenient for us in many ways. In this section, we construct covariant variants for $T_\gs$ and $\hat T$.

Let $\M\in {'\sfi}$ be a Kisin module of height $r$, we define
\begin{equation}\label{Eq-define TM}
T^*_\gs (\M) := \left ( \M \otimes_{\gs}W(\FrR)\right)^{\varphi =1}.
\end{equation}
Since $\gs \subset W(R) ^{G_\infty}$, we see that $G_\infty$ acts on $T^*_\gs (\M)$. 

\begin{lemma}\label{lem-covariant}
There is a natural isomorphism  of $\Z_p[G_\infty]$-modules
$$T^*_\gs (\M) \simeq V(\O_\E \otimes_\gs \M) $$
\end{lemma}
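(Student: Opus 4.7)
The plan is that this lemma is essentially a formal consequence of the already-established Lemma \ref{Eq-V(M)} together with associativity of tensor products, so no hard work is required. The only things to check are that the identifications are compatible with $\varphi$ and with the $G_\infty$-action.

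First, I would observe that the ring map $\gs \hookrightarrow W(\FrR)$ factors through $\O_\E$, since $\O_\E$ is the $p$-adic completion of $\gs[1/u]$ and the element $u$ becomes a unit in $W(\FrR)$ (it is sent to $[\upi]$, and $[\upi]$ is a uniformizer of the complete discrete valuation ring $\O_\E$ sitting inside $W(\FrR)$, with $[\upi]$ being a unit after inverting $u$, etc.). Therefore, writing $M := \O_\E \otimes_\gs \M$, there is a canonical isomorphism
\[
\M \otimes_\gs W(\FrR) \;\simeq\; (\O_\E \otimes_\gs \M) \otimes_{\O_\E} W(\FrR) \;=\; M \otimes_{\O_\E} W(\FrR),
\]
which is clearly compatible with the Frobenius on both sides (Frobenius being defined diagonally on each tensor product using $\varphi_\M$, $\varphi_{\O_\E}$ and the Witt-vector Frobenius on $W(\FrR)$).

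Next, I would take $\varphi=1$ invariants on both sides. The left side is by definition $T^*_\gs(\M)$, and the right side is $(M \otimes_{\O_\E} W(\FrR))^{\varphi=1}$, which equals $V(M)$ by Lemma \ref{Eq-V(M)}. This gives the desired $\Z_p$-module isomorphism $T^*_\gs(\M) \simeq V(M) = V(\O_\E \otimes_\gs \M)$.

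Finally, I would verify $G_\infty$-equivariance. Both $\gs$ and $\O_\E$ are contained in $W(\FrR)^{G_\infty}$ (the first by construction of the embedding $\iota$, the second because $\O_\E$ is the $p$-adic completion of $\gs[1/u]$ and $G_\infty$ acts continuously), so the $G_\infty$-action on each of $\M \otimes_\gs W(\FrR)$ and $M \otimes_{\O_\E} W(\FrR)$ is induced purely from the action on the right-hand factor $W(\FrR)$. The canonical isomorphism above is therefore $G_\infty$-equivariant, and this equivariance is preserved after taking $\varphi=1$ invariants (since $\varphi$ commutes with $G_\infty$ on $W(\FrR)$). I expect no real obstacle: the only point requiring any care is confirming that the map $\gs \to W(\FrR)$ does factor through $\O_\E$, which follows immediately from the description of $\O_\E$ as the $p$-adic completion of $\gs[1/u]$ inside $W(\FrR)$.
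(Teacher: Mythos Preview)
Your proposal is correct and follows essentially the same approach as the paper, which simply notes that $M := \O_\E \otimes_\gs \M$ is an \'etale $\varphi$-module and invokes Lemma \ref{Eq-V(M)}. Your version merely spells out in more detail the associativity step and the $G_\infty$-equivariance; one small slip in your parenthetical is that the uniformizer of $\O_\E$ is $p$, not $[\upi]$, but the relevant fact you need (that $[\upi]$ is a unit in $W(\FrR)$, so the embedding extends to $\O_\E$) is already stated in the paper just before Theorem \ref{thmFon}.
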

\begin{proof}
Note that $M := \O_\E \otimes_{\gs} \M$ is an \'etale $\varphi$-module, and the lemma follows from Lemma \ref{Eq-V(M)}.
\end{proof}

Now let us turn to the situation of $(\varphi, \hat G)$-modules. Given $\hM = (\M, \varphi_\M , \hat G) $ a $(\varphi, \hat G)$-module, either finite free or torsion, we define
$$ \hat T ^* (\hM) : = (W(\FrR) \otimes_{\varphi, \gs} \M ) ^{\varphi =1}.$$
Note that $W(\FrR) \otimes_{\varphi, \gs} \M$ has a $G_K$-action induced from that on $\hM: = \hR \otimes_{\varphi, \gs}\M$. So $\hat T ^*(\hM)$ is indeed a $\Z_p [G_K]$-module.

Now we obtain a covariant version of Theorem \ref{thm-review}:

\begin{theorem}\label{thm-old-covariant}
\begin{enumerate}
\item  $\hat T^*$ induces an equivalence between the category of finite free
$(\varphi, \hat G)$-modules of height $r$ and the category of
$G_K$-stable $\Z_p$-lattices in semi-stable representations of $G_K$
with Hodge-Tate weights in $\{-r, \dots, 0 \}$.

\item If $\M$ is either finite free or torsion then there exists a natural isomorphism $T^*_\gs(\M) \ito \hat T^* (\hM)$ of $\Z_p[G_\infty]$-modules.

\end{enumerate}
\end{theorem}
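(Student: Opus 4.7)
For (2), I would exploit the fact that $E(u)$ becomes a unit after base-changing from $\gs$ to $\O_\E$, so $M := \O_\E \otimes_\gs \M$ is étale over $\O_\E$ and the linearized Frobenius $1 \otimes \varphi_M : \varphi^*M \to M$ is an iso. Base-changing along $\O_\E \hookrightarrow W(\FrR)$ and identifying $W(\FrR) \otimes_{\O_\E} \varphi^*M = W(\FrR) \otimes_{\varphi, \gs} \M$ and $W(\FrR) \otimes_{\O_\E} M = W(\FrR) \otimes_\gs \M$ (by associativity), I obtain a natural iso
$$\phi : W(\FrR) \otimes_{\varphi, \gs} \M \xrightarrow{\sim} W(\FrR) \otimes_\gs \M, \qquad x \otimes m \mapsto x \otimes \varphi_\M(m).$$
Direct inspection shows $\phi$ is well-defined (the $\varphi$-twist in the source tensor cancels with $\varphi_\M$), $G_\infty$-equivariant, and intertwines the natural Frobenius $x \otimes m \mapsto \varphi(x) \otimes \varphi_\M(m)$ on each side. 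Taking $\varphi = 1$ invariants and applying Lemma \ref{lem-covariant} gives the desired $\Zp[G_\infty]$-iso $\hat T^*(\hM) \ito T^*_\gs(\M)$, uniformly in the finite free and torsion cases.

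For (1), I would deduce the covariant equivalence from the contravariant Theorem \ref{thm-review}(1) via a duality pairing. Define
$$\langle -, - \rangle : \hat T^*(\hM) \times \hat T(\hM) \to \Zp, \qquad \Bigl(\sum\nolimits_i x_i \otimes m_i,\ f\Bigr) \mapsto \sum\nolimits_i x_i\, f(1 \otimes m_i).$$
A short computation using $\varphi$-invariance of the first argument and $\varphi$-equivariance of $f$ shows the output lies in $W(\FrR)^{\varphi = 1} = \Zp$, and the definitions of the $G_K$-actions make the pairing $G_K$-equivariant. It thus induces a $G_K$-equivariant map $\Theta : \hat T^*(\hM) \to \hat T(\hM)^\vee$. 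To check $\Theta$ is an iso, I work at the $G_\infty$-level: by (2) and Theorem \ref{thm-review}(3), $\hat T^*(\hM)|_{G_\infty} \cong T^*_\gs(\M)$ and $\hat T(\hM)|_{G_\infty} \cong T_\gs(\M)$, and the natural evaluation pairing $T^*_\gs(\M) \times T_\gs(\M) \to \Zp$ is perfect (for $M = \O_\E \otimes_\gs \M$ finite free this follows from the iso $M \otimes_{\O_\E} \O_{\widehat \E^\ur} \cong V(M) \otimes_{\Zp} \O_{\widehat \E^\ur}$ by taking $\Hom(-, \O_{\widehat \E^\ur})$ and $\varphi$-invariants; for torsion one dévissages). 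A traceback through the construction identifies this pairing with $\Theta|_{G_\infty}$, so $\Theta$ is bijective, hence a $G_K$-equivariant iso. Combining with Theorem \ref{thm-review}(1) and the observation that dualization sends semi-stable representations with Hodge--Tate weights in $\{0, \dots, r\}$ to ones with weights in $\{-r, \dots, 0\}$ yields the claimed covariant equivalence.

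The main subtlety I expect is the traceback in (1) --- verifying that the pairing-induced $\Theta$ really matches the iso assembled from the $G_\infty$-identifications and the evaluation duality --- which boils down to a concrete formula chase through the explicit description of the iso in Theorem \ref{thm-review}(3). Once pinned down, the upgrade from a $G_\infty$-iso to a $G_K$-iso is automatic because $\Theta$ is $G_K$-equivariant by construction, and the linearized-Frobenius argument in (2) is essentially formal.
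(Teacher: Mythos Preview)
Your proof is correct, but part (1) takes a more circuitous route than the paper. The paper does not construct a pairing from scratch; instead it directly uses the $W(R)$-linear injection $\hat\iota: W(R) \otimes_{\varphi,\gs} \M \hookrightarrow \hat T^\vee(\hM) \otimes_{\Z_p} W(R)$ already supplied by Theorem~\ref{thm-review}(2). Since the cokernel of $\hat\iota$ is killed by $(\varphi(\gt))^r$ and $\varphi(\gt)$ is a unit in $W(\FrR)$, tensoring up to $W(\FrR)$ turns $\hat\iota$ into an isomorphism that is already $G_K$- and $\varphi$-equivariant; taking $\varphi$-invariants immediately gives $\hat T^*(\hM) \simeq \hat T^\vee(\hM)$ as $G_K$-modules. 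Your pairing $\langle\xi,f\rangle = \tilde f(\xi)$ is, once unwound, exactly the adjoint of this same map $\hat\iota$, so the ``traceback'' you flag as the main subtlety is precisely what the paper sidesteps by invoking $\hat\iota$ directly. The paper's route buys you the $G_K$-equivariance and bijectivity in one stroke, whereas yours separates them (equivariance by construction, bijectivity via a $G_\infty$-reduction and an auxiliary perfectness check).

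For (2), both arguments produce a $\varphi$- and $G_\infty$-equivariant isomorphism between $W(\FrR)\otimes_{\varphi,\gs}\M$ and $W(\FrR)\otimes_\gs\M$, but via different maps: the paper uses $\varphi\otimes 1$ (exploiting that $\varphi$ is bijective on $W(\FrR)$), while you use $1\otimes\varphi_\M$ (exploiting that $1\otimes\varphi$ becomes invertible once $E(u)$ is a unit). Both are valid and the difference is cosmetic.
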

\begin{proof} (1) It suffices to prove that $\hat T^*(\hM) $ is isomorphic to  $\hat T^\vee (\hM)$, the dual of $\hat T(\hM)$. Tensor $W(\FrR)$  on both sides of \eqref{Eq: iota}.  Since $\varphi (\gt)$ is a unit in $W(\FrR)$,   we get an isomorphism
$$ W(\FrR) \otimes_{\varphi,\gs} \M \simeq \hat T^\vee( \hM) \otimes_{\Z_p} W(\FrR)$$ which is compatible with $G_K$-actions and $\varphi$-actions on the both sides. By taking $\varphi$-invariants on  both sides, we get $\hat T^*(\M) \simeq \hat T ^\v (\hM)$.

(2) Since $\varphi : W(\FrR) \to W(\FrR)$ is a ring isomorphism,  $W(\FrR) \otimes _\gs \M \to W(\FrR ) \otimes_{\varphi, \gs} \M $ induced by $\varphi\otimes 1 $ is an isomorphism which is compatible with Frobenius and $ G_\infty$-actions. In particular, we have $ T^*_\gs (\M ) = (W(\FrR ) \otimes_{\varphi, \gs} \M )^{\varphi=1}$. Combining the fact that $\M$ is a $\varphi(\gs)$-submodule of $\hM^{H_K }$, we see that $\hat{T}^*(\hM ) \simeq T ^*_\gs(\M)$ as $\Z_p[G_\infty]$-modules.
\end{proof}

\begin{convention}
From now on throughout this paper, when we use semi-stable (indeed, most of the time, crystalline) representations, without further mentioning, we always mean representations with non-positive Hodge-Tate weights (so that we can attach $(\varphi, \hat G)$-modules or Kisin modules via Theorem \ref{thm-old-covariant}).
\end{convention}

\subsubsection{}
For any integer  $s \ge 0$, let $\M(\varepsilon_p ^{-s})$ denote the Kisin module corresponding to $\varepsilon_p^{-s}$ via our Theorem \ref{thm-old-covariant}.
By \cite[Example 2.3.5]{liu2} (and our Theorem \ref{thm-old-covariant}),
this is a rank-1 $\gs$-module with a base $\fe$ so that $\varphi (\fe) = (c_0 ^{-1} E(u))^s \fe $ where $c_0=E(0)/p$.  For any Kisin module $\M$, we denote $\M (s):  = \M \otimes \M (\varepsilon _p^{-s})$. So $\varphi_{\M({-s})} = (c_0^{-1} E(u))^s \varphi_\M$.

\subsection{Kisin models and $(\varphi, \hat G)$-models}

\begin{defn}[\cite{kisin2}]
\begin{enumerate}
  \item Given an \'etale $\varphi$-module $M$ in  $'\t{Mod} _{\O_\E} ^{\varphi}$.   If $\M \in {'\sfi}$ is a Kisin module so that $M =  \O_\E \otimes_\gs \M $ then $\M$ is called a \emph{Kisin model} of $M$, or simply a \emph{model} of $M$.

  \item  Given $\hat M:  = (M, \varphi_M, \hat G_M)$ a torsion (resp. finite free) $(\varphi, \tau)$-module. A torsion (resp. finite free) $(\varphi, \hat G)$-module  $\hM : = (\M, \varphi_\M, \hat G)$ is called a \emph{model} of $\hat M$ if $\M$ is a model of $M$ and the isomorphism
$$ W(F_\tau) \otimes_{\hR} \hM \simeq W(F_\tau) \otimes_{\varphi, W(F_\tau)} \hat M$$
induced by $\O_\E \otimes_{\gs} \M \simeq M $ is compatible with $\hat G$-actions on both sides.
\end{enumerate}
\end{defn}

\begin{lemma}
  \begin{enumerate}
    \item If $\M$ is a model of $M$, then $T^*_\gs(\M)\simeq V(M)$ as  $\Z_p[G_\infty]$-modules.
    \item If $\hM$ is a model of $\hat M$ then $\hat T^*(\hM)\simeq \cT ^*(\hat M)$ as $\Z_p[G_K]$-modules.
  \end{enumerate}
\end{lemma}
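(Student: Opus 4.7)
The plan for part (1) is a straightforward unfolding of definitions. Since $\M$ is a model of $M$, we have $M \simeq \O_\E \otimes_\gs \M$. Tensoring over $\O_\E$ with $W(\FrR)$ produces a natural isomorphism $\M \otimes_\gs W(\FrR) \simeq M \otimes_{\O_\E} W(\FrR)$, which is compatible with $\varphi$ (via $\varphi \otimes \varphi$ on both sides) and with the $G_\infty$-action (since $\gs, \O_\E \subset W(\FrR)^{G_\infty}$ and $G_\infty$ acts trivially on $\M$ and on $M$). Taking $\varphi$-invariants and using Lemma \ref{Eq-V(M)} to identify $(M \otimes_{\O_\E} W(\FrR))^{\varphi = 1}$ with $V(M)$ then gives the desired isomorphism $T^*_\gs(\M) \simeq V(M)$ of $\Z_p[G_\infty]$-modules.

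For part (2), the main thing to keep track of is the Frobenius twist appearing in the definition of a $(\varphi, \hat G)$-model. Starting from the defining isomorphism $W(F_\tau) \otimes_{\hR} \hM \simeq W(F_\tau) \otimes_{\varphi, W(F_\tau)} \hat M$, I would unfold both sides to obtain a $\hat G$- and $\varphi$-equivariant isomorphism
$$W(F_\tau) \otimes_{\varphi, \gs} \M \simeq W(F_\tau) \otimes_{\varphi, \O_\E} M.$$
Base-changing along $W(F_\tau) \to W(\FrR)$ then yields a $G_K$- and $\varphi$-equivariant isomorphism $W(\FrR) \otimes_{\varphi, \gs} \M \simeq W(\FrR) \otimes_{\varphi, \O_\E} M$, whose $\varphi$-invariants compute $\hat T^*(\hM)$ on the left.

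To relate the right-hand side to the untwisted tensor product appearing in $\cT^*(\hat M) = (W(\FrR) \otimes_{\O_\E} M)^{\varphi = 1}$, I would use that $\varphi$ is a ring automorphism of $W(\FrR)$ commuting with $G_K$. The map $x \otimes m \mapsto \varphi(x) \otimes m$ then defines a $G_K$- and $\varphi$-equivariant isomorphism $W(\FrR) \otimes_{\O_\E} M \simeq W(\FrR) \otimes_{\varphi, \O_\E} M$, exactly as in the proof of Theorem \ref{thm-old-covariant}(2). Composing this with the isomorphism from the previous paragraph and taking $\varphi$-invariants yields $\cT^*(\hat M) \simeq \hat T^*(\hM)$ as $\Z_p[G_K]$-modules. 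The only place requiring a bit of bookkeeping is verifying that extending the $\hat G$-equivariance from $W(F_\tau)$-coefficients to the full $G_K$-equivariance on $W(\FrR)$-coefficients recovers the action already used in the definition of $\hat T^*(\hM)$, but this is automatic since the relevant $G_K$-actions both factor through $\hat G$. I do not anticipate any serious obstacle beyond this routine diagram chase.
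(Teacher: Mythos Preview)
Your proposal is correct and follows essentially the same route as the paper: part (1) is exactly Lemma~\ref{lem-covariant}, and for part (2) both you and the paper pass from the defining model isomorphism to $W(\FrR)\otimes_{\varphi,\gs}\M \simeq W(\FrR)\otimes_{\varphi,\O_\E}M$, then undo the Frobenius twist via $\varphi\otimes 1$ (as in the proof of Theorem~\ref{thm-old-covariant}(2)) and take $\varphi$-invariants. If anything, you are slightly more explicit than the paper about why the $\varphi\otimes 1$ isomorphism is $G_K$-equivariant rather than merely $G_\infty$-equivariant, which is indeed needed for the stated conclusion.
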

\begin{proof}
(1) is Lemma \ref{lem-covariant}.

For (2), by the definition of $(\varphi, \hat G)$-model, we get an isomorphism
 $$ W(\FrR ) \otimes_{\varphi, \gs} \M  \simeq W(\FrR) \otimes_{\varphi, \O_\E} M $$
which is compatible with $\varphi$-actions and $G_K$-actions on both sides. By the proof of Theorem \ref{thm-old-covariant},
we see that  the isomorphism $W(\FrR) \otimes _  {\O_\E}M \to W(\FrR ) \otimes_{\varphi, \O_\E} M $ induced by $\varphi \otimes 1$ is also  compatible with Frobenius and $ G_\infty$-actions. By taking Frobenius invariants on these two isomorphisms, we see that  $\hat T^*(\hM)\simeq \cT ^*(\hat M)$.
\end{proof}

Clearly, the most natural models of \'etale $(\varphi, \tau)$-modules come from lattices in semi-stable representations.
\begin{lemma} \label{lem model}
Suppose $T$ is a $G_K$-stable $\Z_p$-lattice in a semi-stable representation  of $G_K$ with Hodge-Tate weights in $\{-r, \dots, 0 \}$.
Let $\hat M:  = (M, \varphi_M, \hat G_M)$ be the $(\varphi, \tau)$-module associated to $T$ via Proposition \ref{prop-cT*}.
Let $\hM : = (\M, \varphi_\M, \hat G)$ be the $(\varphi, \hat G)$-module associated to $T$ via Theorem \ref{thm-old-covariant}.
Then $\hM$ is a model of $\hat M$.
\end{lemma}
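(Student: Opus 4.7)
The plan is to check the two conditions in the definition of model: (a) that $\M$ is a Kisin model of $M$, and (b) that the natural isomorphism $W(F_\tau)\otimes_{\hR}\hM \simeq W(F_\tau)\otimes_{\varphi,W(F_\tau)}\hat M$ induced by (a) is compatible with $\hat G$-actions. I would begin by combining the three equivalences already established: by Lemma \ref{lem-covariant}, $T^*_\gs(\M) \simeq V(\O_\E\otimes_\gs\M)$ as $\Z_p[G_\infty]$-modules; by Theorem \ref{thm-old-covariant}(2), $T^*_\gs(\M) \simeq \hat T^*(\hM) \simeq T$ as $G_\infty$-representations; and by Proposition \ref{prop-cT*}(1), $V(M) \simeq \cT^*(\hat M)|_{G_\infty} \simeq T|_{G_\infty}$. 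Hence $V(\O_\E\otimes_\gs\M) \simeq V(M)$, and Fontaine's equivalence (Theorem \ref{thmFon}) yields a canonical isomorphism $\O_\E\otimes_\gs\M \simeq M$ of \'etale $\varphi$-modules, establishing (a).

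For (b), unwinding definitions gives $W(F_\tau)\otimes_{\hR}\hM = W(F_\tau)\otimes_{\varphi,\gs}\M$ and $W(F_\tau)\otimes_{\varphi,W(F_\tau)}\hat M = W(F_\tau)\otimes_{\varphi,\O_\E}M$, so the induced isomorphism is simply $W(F_\tau)\otimes_{\varphi,\O_\E}(-)$ applied to the isomorphism of (a). I would verify its $\hat G$-equivariance after extending scalars along the injection $W(F_\tau)\hookrightarrow W(\FrR)$. On the one side, the proof of Theorem \ref{thm-old-covariant}(1) (via the map in \eqref{Eq: iota}) yields a $G_K$-equivariant isomorphism $W(\FrR)\otimes_{\varphi,\gs}\M \simeq W(\FrR)\otimes_{\Z_p}\hat T^*(\hM) = W(\FrR)\otimes_{\Z_p}T$. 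On the other side, Proposition \ref{prop-cT*} provides a $G_K$-equivariant isomorphism $W(\FrR)\otimes_{W(F_\tau)}\hat M \simeq W(\FrR)\otimes_{\Z_p}T$; twisting by the $G_K$-equivariant Frobenius on $W(\FrR)$ then produces $W(\FrR)\otimes_{\varphi,W(F_\tau)}\hat M \simeq W(\FrR)\otimes_{\Z_p}T$ as $G_K$-modules as well. Because both $\hat T^*(\hM)$ and $\cT^*(\hat M)$ recover the same $\Z_p[G_K]$-module $T$ upon taking $\varphi$-invariants, the composite isomorphism $W(\FrR)\otimes_{\varphi,\gs}\M \simeq W(\FrR)\otimes_{\varphi,W(F_\tau)}\hat M$ is $G_K$-equivariant.

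Finally, taking $H_\infty$-invariants (using $W(\FrR)^{H_\infty}=W(F_\tau)$ together with the flatness of the finite free modules involved) descends this $G_K$-equivariant isomorphism to the desired $\hat G = G_K/H_\infty$-equivariant isomorphism required in (b). The main obstacle is verifying that the two $G_K$-equivariant identifications with $W(\FrR)\otimes_{\Z_p}T$ in the previous paragraph truly coincide, rather than differing by an automorphism of $T$; this reduces to a naturality check encoding the fact that both $\hat T^*$ and $\cT^*$, applied to the same lattice $T$, return $T$ with its original $G_K$-action.
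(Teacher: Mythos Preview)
Your proposal is correct and is precisely the routine verification the paper has in mind; the paper's own proof consists of the single word ``Easy.'' You have simply unpacked that word by tracing both $\M$ and $M$, and both $\hat G$-actions, back to the same lattice $T$ through the equivalences of Theorem~\ref{thmFon}, Proposition~\ref{prop-cT*}, and Theorem~\ref{thm-old-covariant}. The concern you raise at the end about a possible nontrivial automorphism of $T$ is not a genuine obstacle: all of the identifications you use arise from one and the same $\varphi$- and $G_K$-equivariant isomorphism $W(\FrR)\otimes_{\gs}\M \simeq W(\FrR)\otimes_{\O_\E}M \simeq W(\FrR)\otimes_{\Z_p}T$ (obtained from $\O_{\widehat\E^\ur}\otimes_{\O_\E}M \simeq \O_{\widehat\E^\ur}\otimes_{\Z_p}T$ by base change), so the composite is the identity by construction rather than merely by a naturality argument.
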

\begin{proof}
Easy.
\end{proof}

Now suppose that $T_n$ is a $p$-power torsion representation of $G_K$, and $M_n$ the \'etale $\varphi$-module associated to $T_n|_{G_\infty}$. A natural source of Kisin models of $M_n$ comes from the following. Suppose that we have a surjective map of $G_K$-representations $f: L \onto T_n$ where $L$ is a semi-stable finite free $\Zp$-representation, then it induces the surjective map (which we still denote by $f$) $f: \mathcal L \onto M_n$, where $\mathcal L$ is the \'etale $\varphi$-module associated to $L|_{G_\infty}$. If $\mathfrak L$ is the Kisin module associated to $L$, then by Lemma \ref{lem model} above, $\mathfrak L$ is a Kisin model of $\mathcal L$. And so $f(\mathfrak L)$ is clearly a Kisin model of $M_n$.

\begin{example} \label{ex-cyclotomic} Let $T_1= \BF_p $ be the trivial $G_K$-representation and $M_1$ denote the corresponding trivial \'etale $\varphi$-module. Then $\M_1 = \ku$ is a model of $M_1$ which is realized by the surjection $\Z_p \twoheadrightarrow \BF_p $.
We also have another surjection $\Z_p (1-p) \twoheadrightarrow  \BF_p$, which realizes another model $\M_1 (p-1)\subset M_1$.
It is easy to check that $\M_1 (p-1) = u ^{e} \M_1$.

\end{example}

\subsection{Overconvergence and the main result}
For any $x \in W(\Fr R)$, we can write $x = \sum \limits_{i = 0}^\infty p ^i [x _i]$ with $x_i\in \Fr R$. Denote $v_R (\cdot)$ the valuation on $\FrR$ normalized by $v_R(\upi) =1$. For any $r \in \mathbb R^{>0}$, set
$$ W(\Fr R) ^{\dag, r}: =\left \{ x= \sum _{i = 0}^\infty p ^i [x _i] \in W(\FrR) | \  i  +   r v_R (x_i ) \to + \infty \right \}. $$
It turns out that $ W(\FrR) ^{\dag, r}$ is a ring, stable under $G_K$-action but not Frobenius, namely $\varphi (W(\FrR)^{\dag, r}) = W(\FrR)^{\dag, r/p}$. See \cite[\S II.1]{Colmez-overcon} for more details.
For any subring $A \subset W(\Fr R)$, denote
$$A^{\dag, r}: =A \cap W(\Fr R) ^{\dag, r}.$$

Recall that for a finite free $\Z_p$-representation $T$ of $G_K$, we can associate the $(\varphi, \tau)$-module via:
$$ M(T)  = \left ( \O_{\widehat \E^\ur} \otimes_{\Z_p} T \right ) ^{G_\infty} \ \text{ and } \  \hat M(T) = \left (W(\FrR) \otimes_{\Z_p} T \right ) ^{H_\infty} .$$
Now for any $r>0$, we define
$$ M^{\dagger, r} (T): = \left ( \O ^{\dag, r}_{\widehat  \E ^\ur} \otimes_{\Z_p} T \right ) ^{G_\infty} \ \text{ and } \  \hat M^{\dagger, r}(T) := \left (W(\FrR)^{\dag, r} \otimes_{\Z_p} T \right ) ^{H_\infty} .$$

\begin{definition}\label{Def-OC}
For a finite free $\Z_p$-representation $T$ of $G_K$, its associated $(\varphi, \tau)$-module is called overconvergent if there exists $r > 0$ such that
$$ M(T) = \O_\E  \otimes_{\O_\E ^{\dag, r}}   M^{\dagger, r} (T) \ \text{ and } \
 \hat M(T) = W(F_\tau) \otimes_{W(F_\tau) ^{\dag, r} } \hat M^{\dagger, r}(T).
$$
\end{definition}

The main theorem of our paper is the following:
\begin{thm}
For any finite free $\Z_p$-representation $T$ of $G_K$, its associated \'etale $(\varphi, \tau)$-module is overconvergent.
\end{thm}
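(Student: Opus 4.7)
The plan is to approximate the $p$-adic representation $T$ by its torsion quotients $T_n := T/p^n T$, realize each $T_n$ as a $G_K$-quotient of a $\Z_p$-lattice in a crystalline representation whose Hodge-Tate weights are controlled, and then transfer the resulting Kisin-module structures to build a well-chosen basis of $M(T)$ and $\hat M(T)$. The overconvergence radius will be governed by the growth rate of the Hodge-Tate weights of these lifts, and the central quantitative content of the argument is that this growth can be forced to be \emph{linear} in $n$.

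First I would establish, in Section \ref{sec-lift}, a ``loose crystalline lifting'' theorem: for each $n \geq 1$ construct a finite free $\Z_p$-representation $\widetilde L_n$ of $G_K$, crystalline with non-positive Hodge-Tate weights, equipped with a $G_K$-equivariant surjection $\widetilde L_n \twoheadrightarrow T_n$. Writing $-h_n$ for the minimal Hodge-Tate weight of $\widetilde L_n$, the decisive refinement is that $h_n - h_{n-1}$ can be bounded by a constant $s$ independent of $n$, so $h_n$ grows linearly. The base case $n=1$ comes from a potentially crystalline lifting theorem for the residual representation $\overline T$ (where the finiteness of $k$ enters, since it ensures only finitely many residual representations can occur), and the inductive step glues $\widetilde L_{n-1}$ to a carefully chosen crystalline ``correction'' representation surjecting onto the kernel of $T_n \twoheadrightarrow T_{n-1}$, arranged so that only a bounded amount of Hodge-Tate weight is added at each step. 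By Theorem \ref{thm-old-covariant}, each $\widetilde L_n$ corresponds to a finite free Kisin module $\widetilde{\mathfrak L}_n$ of $E(u)$-height $h_n$, and the surjection onto $T_n$ induces a torsion Kisin model $\mathfrak M_n \subset M_n := M(T)/p^n M(T)$ of height at most $h_n$. Since the $\widetilde L_n$ may a priori live over different finite unramified extensions, I would use the theory of maximal liftable Kisin models and their invariance under finite unramified base change (Section \ref{sec max kisin}) to arrange the $\mathfrak M_n$ into a compatible tower defined over $K$ itself; simultaneously, the associated $(\varphi, \hat G)$-modules $\widehat{\mathfrak L}_n := \hR \otimes_{\varphi, \gs} \widetilde{\mathfrak L}_n$ equip the torsion reductions of $\hat M(T)$ with compatible $\hat G$-structures.

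Finally, in Section \ref{sec OC}, I would extract the overconvergent basis. The key quantitative observation is that for a Kisin module of height $h_n$ the matrix of $\varphi$ has cokernel killed by $E(u)^{h_n}$, so inverting it over $\O_\E$ introduces denominators of $u$-order at most $e h_n$. Taking a careful inverse limit of well-chosen bases of the $\mathfrak M_n$ and using the linear bound $h_n \leq s n + O(1)$, one obtains a basis $(e_1, \dots, e_d)$ of $M(T)$ in which both the matrix of $\varphi$ and its inverse have entries in $\O_\E^{\dag, r}$ for some positive $r$ depending only on $s$ and $e$; this is the $M$-side of Definition \ref{Def-OC}. Applying the same analysis to the matrices describing the $\hat G$-action arising from $\widehat{\mathfrak L}_n$ places these entries in $W(\FrR)^{\dag, r}$ for the same $r$, giving the $\hat M$-side as well.

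The main obstacle is the first stage: producing crystalline lifts with \emph{linear} growth of Hodge-Tate weights. A naive iterative lift doubles heights at each step, which would give $r = 0$ and prove nothing; keeping the increment $s$ uniform in $n$ crucially uses finiteness of $k$ to reduce to a bounded family of residual representations, and requires a careful inductive construction of the correction terms that controls the contribution to the Hodge-Tate weights at each stage. Once this linear bound is in hand, the Kisin-model bookkeeping of Sections \ref{sec max kisin} and \ref{sec torsion thy}, together with the quantitative inversion estimate above, delivers the overconvergence radius essentially automatically.
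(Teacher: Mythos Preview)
Your proposal is essentially correct and follows the paper's approach: loose crystalline lifts with linearly growing Hodge--Tate weights, maximal liftable Kisin models with unramified descent, and a limit construction of a distinguished basis. One genuine gap remains in your final paragraph, however. Knowing that the matrices of $\varphi$ and of each $\tau\in\hat G$ with respect to your basis $(e_1,\dots,e_d)$ lie in $\O_\E^{\dag,r}$ and $W(\FrR)^{\dag,r}$ is \emph{not} the same as the $M$-side and $\hat M$-side of Definition~\ref{Def-OC}: those equalities require the basis vectors $e_i$ themselves to lie in $M^{\dag,r}(T)=(\O_{\widehat\E^\ur}^{\dag,r}\otimes T)^{G_\infty}$, i.e.\ that the change-of-basis matrix $X$ from a $\Z_p$-basis of $T$ to $(e_1,\dots,e_d)$ has entries in $\O_{\widehat\E^\ur}^{\dag,r}$. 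This is an extra step (Theorem~\ref{thm final OC} in the paper): one uses the Frobenius equation $\varphi(X)=XA$, together with $A\in\Md(\gs[\![p/u^\alpha]\!])$, and argues by induction on $n$ (via Lemma~\ref{lem W(R) eqn}) that $u^{(n-1)\alpha}X_n\in\Md(W_n(R))$, whence $X\in\Md(W(R)[\![p/u^\alpha]\!])$. This is not difficult once Theorem~\ref{thm-main2} is in hand, but it is not automatic, and it is where the actual overconvergence statement is finally established.

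A second, more minor point: your ``key quantitative observation'' that inverting the $\varphi$-matrix introduces $u$-denominators of order $\leq e h_n$ is not quite the mechanism the paper uses. The torsion Kisin models $\M_{(n)}$ are typically not $\gs_n$-free, so there is no single $\varphi$-matrix to invert. The paper instead d\'evisses $\M_{(n)}$ into its $\ku$-free layers $\M_{(n)}^{i-1,i}$ and proves (Proposition~\ref{prop-key}) that there is a \emph{fixed} constant $h$ with $u^h\M_{(n)}^{i-1,i}\subset\M_{(n)}^{i,i+1}$ for all $i$; it is this uniform $h$, not the height $h_n$ itself, that controls the denominators in the basis construction of Theorem~\ref{thm OC basis}. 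Your linear bound on $h_n$ is what makes such an $h$ exist, but the passage from one to the other is the substance of Section~\ref{sec torsion thy} and should not be described as mere bookkeeping.
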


The main input to prove the above theorem is the following, which says that there exists an ``overconvergent basis", with respect to which all entries of matrices for $\varphi$ and $\hat G$ are overconvergent elements. In fact, we can make these matrix entries to fall in more precise rings.

\begin{theorem}\label{thm-main2}
Let $\rho : G_K \to \GL_d (\Z_p)$ be a continuous representation and $\hat M = (M, \varphi_M, \hat G)$ the associated \'etale $(\varphi, \tau)$-module.
Then there exists an $\O_\E$-basis of $M$, and a constant $\alpha = \alpha (p, f, e , d)$ depending only on $p, f, e$ and $d$, such that with respect to this basis,
\begin{itemize}
  \item the matrix of $\varphi_M$ is in $\Md(\gs [\![\frac{p}{u ^\alpha}]\!])$,
  \item the matrix of $\tau$ is in $\Md(W(R)[\![\frac{p}{u ^\alpha} ]\!])$ for any $\tau \in \hat G$.
\end{itemize}
\end{theorem}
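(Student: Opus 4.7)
\emph{Plan.} The main overconvergence theorem is an immediate consequence of the statement above (via Definition \ref{Def-OC}), since $\gs[\![p/u^\alpha]\!] \subset \O_\E^{\dag,r}$ and $W(R)[\![p/u^\alpha]\!] \subset W(\FrR)^{\dag,r}$ for some $r$ depending only on $\alpha$. So the task is to construct the basis itself, following the strategy of \S \ref{subsec strategy}.

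First, I would invoke the loose crystalline lifting technology of \S \ref{sec-lift} (Theorem \ref{thm-potential-lift}) to produce, for every $n \geq 1$, a $G_K$-equivariant surjection
\[ f_n : \widetilde L_n \twoheadrightarrow T_n := T/p^n T \]
with $\widetilde L_n$ a $G_K$-stable $\Z_p$-lattice in a crystalline representation whose Hodge-Tate weights lie in $\{-h_n,\dots,0\}$, where $h_n = h_{n-1} + s$ for a single constant $s = s(p,f,e,d)$ independent of $n$. The delicate point is that Theorem \ref{thm-potential-lift} only yields crystalline lifts of residual representations after restriction to a finite unramified extension $K'/K$; to descend to $K$ one invokes the invariance of the maximal liftable Kisin model under finite unramified base change from \S \ref{sec max kisin}, combined with an induction on $n$ that lifts a loose crystalline lift of $T_{n-1}$ to one of $T_n$.

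By Theorem \ref{thm-old-covariant} each $\widetilde L_n$ then carries a Kisin module $\mathfrak L_n$ of $E(u)$-height $h_n$, so that $f_n(\mathfrak L_n) \subset M/p^n M$ is a Kisin model of the torsion \'etale $\varphi$-module $M/p^n M$. I would replace it by the unique maximal Kisin model $\M_n^{\max}$ of height $h_n$ containing it (\S \ref{sec torsion thy}), and then compare the tower $\{\M_n^{\max}\}$ across $n$: the reduction $M/p^n M \twoheadrightarrow M/p^{n-1}M$ sends $\M_n^{\max}$ into $\M_{n-1}^{\max}$ with cokernel annihilated by a power of $u$ whose exponent grows \emph{linearly} in $n$, the linearity being a direct consequence of $h_n \leq sn + C$. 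Lifting compatible $\gs/p^n$-bases of $\M_n^{\max}$ up the tower and taking an inverse limit would yield an $\O_\E$-basis $e_1,\dots,e_d$ of $M$ for which the matrix of $\varphi$, written as $\sum_i p^i A_i$, satisfies $A_i \in u^{-\alpha i}\Md(\gs)$ for an $\alpha$ extracted from $s$ and the height bound, depending only on $p,f,e,d$; this is precisely to say that the matrix lies in $\Md(\gs[\![p/u^\alpha]\!])$.

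For the $\tau$-part I would run the same argument with the $(\varphi,\hat G)$-model $\hat{\mathfrak L}_n := \hR \otimes_{\varphi,\gs}\mathfrak L_n$ attached to $\widetilde L_n$: its $\hR$-semilinear $\hat G$-action pushes through $f_n$ to $\hR \otimes_{\varphi,\gs}\M_n^{\max}$, and the same denominator analysis, together with $\hR \subset W(R)$, gives matrices for each $\tau \in \hat G$ in $\Md(W(R)[\![p/u^\alpha]\!])$ after possibly enlarging $\alpha$. \emph{The main obstacle} I expect is the cross-level comparison of $\M_n^{\max}$ with $\M_{n-1}^{\max}$: one needs the cokernel of $\M_n^{\max} \to \M_{n-1}^{\max}$ to be annihilated by $u$ to a power linear in $n$, \emph{uniformly} in $n$. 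It is exactly this uniform linear control — available precisely because $h_n$ grows linearly, rather than polynomially, in $n$ — that produces a \emph{single} $\alpha$ controlling every denominator in the limiting basis, and the structural work of \S \ref{sec max kisin}--\S \ref{sec torsion thy} on liftable and maximal Kisin models inside torsion \'etale $\varphi$-modules is where the bulk of the argument will live.
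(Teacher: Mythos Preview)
Your overall strategy is correct and matches the paper's, but there is a genuine gap in the passage ``Lifting compatible $\gs/p^n$-bases of $\M_n^{\max}$ up the tower and taking an inverse limit.'' The maximal liftable Kisin model $\M_{(n)} \subset M_n$ is a torsion Kisin module, but there is no reason for it to be \emph{free} over $\gs_n = \gs/p^n\gs$; it is only guaranteed that $\M_{(n)}[1/u] = M_n$ is free over $\O_{\E,n}$. So the ``$\gs/p^n$-basis'' you propose to lift simply may not exist, and the construction as written does not go through.

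The paper's workaround is a d\'evissage into $\gs_1$-free pieces. One filters $\M_{(n)}$ by the submodules $\M_{(n)}^{i-1,i} := \ker(p^{i-1}\M_{(n)} \xrightarrow{p} p^i\M_{(n)})$, each of which \emph{is} finite free over $k[\![u]\!]$, and shows (Corollary~\ref{cor identify}) that $\M_{(n)}^{i-1,i} = \M_{(i,1)}$, the image of $\M_{(i)}$ in $M_1$. The cross-level comparison is then not ``cokernel of $\M_{(n)} \to \M_{(n-1)}$ killed by $u$ to a linear power,'' but rather the single fixed bound $u^h \M_{(n-1,1)} \subset \M_{(n,1)}$ with $h$ \emph{independent of $n$} (Proposition~\ref{prop-key}); this is where the detailed structure of the inductive loose lift of Theorem~\ref{thm p^n tor loose crys lift} is actually used. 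From this, Lemma~\ref{lem-devissage} produces explicit generators $\{\e^{(i)}_{(n),j}\}$ of $\M_{(n)}$ (one family for each graded piece) together with transition matrices in $\Md(\gs[p/u^{2h}])$, and one twists the top generators $(\e^{(n)}_{(n),j})$ by an accumulated product of these transition matrices to obtain an honest $\O_{\E,n}$-basis $(e^{(n)}_j)$ of $M_n$ which is compatible in $n$. The resulting matrices for $\varphi$ and $\tau$ then have denominators bounded by $u^{(n-1)\alpha}$ at level $n$, which is exactly what lands them in $\gs[\![p/u^\alpha]\!]$ and $W(R)[\![p/u^\alpha]\!]$ in the limit (Lemma~\ref{lem shrink ring}). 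So the missing ingredient in your sketch is precisely this d\'evissage machinery replacing the nonexistent $\gs_n$-bases.
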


\section{Loose crystalline lifts of torsion representations}\label{sec-lift}
In this section, we show that all $p$-power-torsion representations admit loose crystalline lifts. The proof is via a inductive method. We first show that for a mod $p$ representation of $G_K$, if we restrict it to a certain subgroup $G_{K'}$ where $K'/K$ is finite unramified, then it admits strict crystalline lift.

\subsection{Definition of strict and loose lifts}
Let $E$ be a finite extension of $\Q_p$, $\O_E$ its ring of integers, $\varpi_E$ a uniformizer, $\mathfrak m_E$ the maximal ideal and $k_E= \O_E/ \mathfrak m_E$ the residue field.

\begin{defn}\label{defn loose lift}
\begin{enumerate}
  \item For $\bar \rho: G_K \to \GL_d(k_E)$ a torsion representation, we say that a continuous representation $r : G_K \to \GL _d (\O_E)$ is a \emph{strict} $\O_E$-lift of $\bar \rho$ if $r(\bmod \mathfrak m_E) \simeq \bar \rho$.

  \item  Suppose that $\bar \rho$ is a finite $\O_E$-module with a continuous (discrete) $G_K$-action and  killed by some $p^n$. Then we say that $\bar \rho$ admits a \textbf{loose} $\O_E$-lift $r $ if $r$ is a finite free $\O_E$-representation of $G_K$ and there exists a \emph{surjective} morphism $f : r \onto \bar \rho$ as $\O_E[G_K]$-modules.
\end{enumerate}
\end{defn}

\begin{definition}\label{convention: big enough E}
For a $p$-torsion representation of the form $\bar \rho: G_K \to \GL_d(k_E)$, we say that ``$E$ is big enough" (for $\bar\rho$) if each direct summand of $\bar \rho^{\textnormal{ss}}$ has strict crystalline $\O_E$-lift with non-positive Hodge-Tate weights.
\end{definition}

\begin{lemma}\label{rmk: big E}
\begin{enumerate}
  \item Suppose $\bar \rho: G_K \to \GL_d(\Fp)$ is absolutely irreducible (\emph{i.e.}, $\bar\rho\otimes_{\Fp}\Fpbar$ is irreducible), then $\bar\rho \otimes_{\Fp} k_d$ admits crystalline strict $W(k_d)$-lifts with Hodge-Tate weights in $[-(p^{fd}-2), 0]$. Here, $k_d$ is the degree $d$ extension of $k$, and $W(k_d)$ is the ring of Witt vectors.
  \item Let $\bar \rho: G_K \to \GL_d(k_E)$ be a $p$-torsion representation. If $E$ contains $K_d:=W(k_d)[1/p]$, then $E$ is big enough for $\bar\rho$.
\end{enumerate}
\end{lemma}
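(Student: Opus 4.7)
For part (1), my plan is to invoke the classical structure theorem for absolutely irreducible mod $p$ representations of $G_K$: because wild inertia acts trivially on any semisimple $\Fpbar$-representation, any absolutely irreducible $\bar\rho$ of dimension $d$ is of the form $\Ind_{G_{K_d}}^{G_K} \bar\chi$, where $K_d$ is the unramified extension of $K$ of degree $d$ and $\bar\chi\colon G_{K_d} \to \Fpbar^{\times}$ is a tame character whose $\Gal(K_d/K)$-conjugates are pairwise distinct. The character $\bar\chi$ in fact takes values in $k_d^{\times}$, so extending scalars to $k_d$ is the natural way to work with it.

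Next, I would lift $\bar\chi$ to a crystalline character $\chi\colon G_{K_d} \to W(k_d)^{\times}$. Writing $\bar\chi|_{I_{K_d}}$ as a power $\omega_{fd}^{a}$ of a fundamental character of level $fd$ for some $0\le a\le p^{fd}-2$, Lubin--Tate theory (or equivalently a product of the canonical crystalline lifts of fundamental characters) provides a crystalline lift whose Hodge--Tate weights can be arranged to lie in $[-(p^{fd}-2),0]$; the unramified part of $\bar\chi$ lifts trivially via Teichm\"uller. Since $K_d/K$ is unramified, induction preserves crystallinity and does not enlarge the Hodge--Tate weight range, so $\Ind_{G_{K_d}}^{G_K}\chi$ is a crystalline $W(k_d)$-representation of $G_K$ lifting $\bar\rho\otimes_{\Fp}k_d$ with Hodge--Tate weights in $[-(p^{fd}-2),0]$, which proves (1).

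For part (2), I would decompose $\bar\rho^{\t{ss}}=\bigoplus_i \bar\rho_i$ into $k_E$-irreducible summands of $k_E$-dimensions $d_i\le d$. Each $\bar\rho_i$, base-changed to $\Fpbar$, is a direct sum of Galois-conjugate absolutely irreducible representations of some common dimension $d'\le d_i\le d$, and by part (1) each such piece admits a crystalline lift over $W(k_{d'})$. The hypothesis $E\supset K_d$ ensures (via the containment of residue fields $k_E\supset k_d$) that $\O_E$ accommodates the relevant $W(k_{d'})$, and the conjugate lifts can be assembled, Galois-averaged along the conjugation pattern, into a strict crystalline $\O_E$-lift of $\bar\rho_i$ itself with non-positive Hodge--Tate weights. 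This verifies that $E$ is big enough for $\bar\rho$.

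The main obstacle is part (1), specifically producing a crystalline lift of $\bar\chi$ with the stated Hodge--Tate weight bound: this rests on a careful analysis via Lubin--Tate characters and the level-$fd$ fundamental characters, together with the fact that induction from an unramified subextension stays inside the crystalline category. Part (2), once (1) is in place, is largely organizational, requiring only care to track which coefficient field each absolutely irreducible piece lives over and to descend correctly to $\O_E$ under the hypothesis $E\supset K_d$.
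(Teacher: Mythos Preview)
Your approach is essentially the same as the paper's: the paper simply cites \cite[Prop.~2.1.2]{Muller}, whose content is precisely the structure theorem (absolutely irreducible $=$ induction of a tame-times-unramified character from $G_{K_d}$) together with the Lubin--Tate/fundamental-character crystalline lift that you spell out. For part~(2) the paper reduces directly to the case where $\bar\rho$ is absolutely irreducible over $k_E$ and then lifts the tame part (which lands in $k_d^\times\subset k_E^\times$) and the unramified part (in $k_E^\times$) of the inducing character separately over $\O_E$, which is slightly cleaner than your decomposition over $\Fpbar$ followed by Galois-averaged descent; but the underlying argument is the same.
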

\begin{proof}
Both statements follow from \cite[Prop. 2.1.2]{Muller} (note that there is a minor error in the statement of \emph{loc. cit.}, namely, his $\bar \rho$ should have image in $\GL_d(k_d)$, not $\GL_d(k)$).
Indeed, for (2), without loss of generality, we can assume that $\bar \rho: G_K \to \GL_d(k_E)$ is absolutely irreducible. Then in the notation of \emph{loc. cit.}, the tame character $\omega_{\varpi}^r$ always lands in $k_d$ (before embedding to $\overline \Fp$), and so admits strict crystalline $W(k_d)$-lifts (and thus $\O_E$-lifts); the unramified character $\bar \mu: G_{K_d}\to k_E^{\times}$ certainly can be lifted.
\end{proof}

\begin{remark}  \label{rem: new rem E big}
  So in our Definition \ref{convention: big enough E}, any $E$ that contains $K_d:=W(k_d)[1/p]$ is big enough. However, in our writing, we will stick with $E$ and $k_E$ (rather than $K_d$ and $k_d$) with an eye for future application. In particular, in future applications, we might need to have $E$ to contain the Galois closure of $K_d$.
\end{remark}

\begin{remark}
\begin{enumerate}
 \item For $\bar \rho: G_K \to \GL_d(k_E)$ where $E$ is big enough, we tend to believe that strict crystalline $\O_E$-lift always exists. See the introduction of \cite{GHLS}. In particular, we know this is true for $d \le 3$ by \cite[Prop. 2.5.7]{Muller}.

  \item For $\bar \rho: G_K \to \GL_d(\Fp)$, we can certainly consider strict $\Zp$-lifts, but in general, even if  $\bar \rho$ is irreducible (or even absolutely irreducible), we do not know if such strict (crystalline) $\Zp$-lifts exist at all.

  \item Suppose that $\bar \rho$ is a torsion $\Zp$-representation, so we can talk about loose $\Zp$-lifts. Clearly,
      \begin{itemize}
        \item If a torsion $\Zp$-representation $\bar \rho$ admits loose (crystalline) $\Zp$-lifts, then $\bar\rho \otimes_{\Zp} k_E$ also admits loose (crystalline) $\O_E$-lifts.
        \item If a torsion $\O_E$-representation $\bar \rho$ admits loose (crystalline) $\O_E$-lifts, then if we regard $\bar \rho$ as a torsion $\Zp$-representation, it automatically admits loose (crystalline) $\Zp$-lifts.
      \end{itemize}
      So in general, when we say a torsion representation $\bar\rho$ admits loose (crystalline) lifts, it does not matter if we are regarding $\bar\rho$ as a $\Zp$-module or an $\O_E$-module.
\end{enumerate}
\end{remark}

The main theorem in this section is that, for $\bar \rho$ a torsion Galois representation, crystalline loose lifts always exist.
The proof uses an induction on $n$ where $p^n$ kills $\bar \rho$. The $n=1$ case will be (easily) deduced from the following Theorem \ref{thm-potential-lift}.

\subsection{Potentially strict lift for $p$-torsion representation}\label{subsect-pot-lift}
\begin{theorem}\label{thm-potential-lift}
Suppose $\bar \rho : G_K \to \GL_d (k_E)$ is a Galois representation where $E$ is big enough. Then there exists a finite unramified extension ${K'}$ of $K$, so that $\bar \rho |_{G_{{K'}}}$ admits a strict crystalline $\O_E$-lift.
\end{theorem}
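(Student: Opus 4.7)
The plan is to proceed by induction on $d = \dim_{k_E} \bar\rho$.

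The base case $d = 1$ is immediate: a character equals its own semisimplification, so the hypothesis that $E$ is big enough, combined with Lemma \ref{rmk: big E}, directly supplies a strict crystalline $\O_E$-lift after a suitable finite unramified base change.

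For $d \geq 2$, pick an irreducible $k_E[G_K]$-subrepresentation $\bar\rho_1 \subset \bar\rho$ with quotient $\bar\rho_2$, yielding the short exact sequence
\[
0 \to \bar\rho_1 \to \bar\rho \to \bar\rho_2 \to 0.
\]
Since $E$ is big enough for $\bar\rho$, it is big enough for both Jordan--H\"older factors $\bar\rho_1$ and $\bar\rho_2$. Applying the base case to $\bar\rho_1$ and the inductive hypothesis to $\bar\rho_2$, there is a finite unramified extension $K'/K$ over which $\bar\rho_1|_{G_{K'}}$ and $\bar\rho_2|_{G_{K'}}$ admit strict crystalline $\O_E$-lifts $r_1$ and $r_2$, both with non-positive Hodge--Tate weights. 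The task is then to produce, after possibly further unramified base change, a strict crystalline $\O_E$-lift $r$ of $\bar\rho|_{G_{K'}}$ of the form $0 \to r_1' \to r \to r_2 \to 0$ where $r_1'$ is a strict crystalline $\O_E$-lift of $\bar\rho_1|_{G_{K'}}$.

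The key device is a Hodge--Tate weight shift: set $r_1' := r_1 \otimes \varepsilon_p^{-N(p-1)}$ for a large integer $N$. Because the mod-$p$ cyclotomic character has order dividing $p-1$, $\varepsilon_p^{-N(p-1)}$ reduces to the trivial character, so $r_1'$ is still a strict crystalline $\O_E$-lift of $\bar\rho_1|_{G_{K'}}$, but all of its Hodge--Tate weights are shifted down by $N(p-1)$. Let $V := r_1' \otimes_{\O_E} r_2^\vee$. For $N$ large enough, all Hodge--Tate weights of $V^\vee(1)$ become strictly positive, whence $H^0(G_{K'}, V^\vee(1) \otimes \Qp) = 0$; local Tate duality then gives $H^2(G_{K'}, V) = 0$, and the Bloch--Kato dimension formula (together with $\Fil^0 D_{\dR}(V \otimes \Qp) = 0$) forces $H^1_f(G_{K'}, V \otimes \Qp) = H^1(G_{K'}, V \otimes \Qp)$. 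Consequently, the reduction map
\[
H^1(G_{K'}, V) \twoheadrightarrow H^1(G_{K'}, \bar\rho_1 \otimes_{k_E} \bar\rho_2^\vee)
\]
is surjective, and any preimage of the class $[\bar\rho|_{G_{K'}}]$ corresponds to an extension $r$ that is $\O_E$-free, crystalline after inverting $p$, and reduces to $\bar\rho|_{G_{K'}}$ modulo the uniformizer of $\O_E$: the desired strict crystalline $\O_E$-lift.

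The main obstacle is the cumulative control on $K'$: each inductive step can enlarge $K'$, and one must verify that the total extension remains finite unramified over $K$. This is controlled by Lemma \ref{rmk: big E} and \cite[Prop. 2.1.2]{Muller}, which bound the required unramified extension for each Jordan--H\"older factor purely in terms of $p$, $f$, and $d$; thus the accumulated extension over the $d$ inductive steps remains finite unramified over $K$. The choice of $N$ in the twisting step is independent of $K'$ and depends only on the Hodge--Tate weights of $r_1$ and $r_2$.
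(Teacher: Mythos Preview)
Your overall architecture—induction on the length of a Jordan--H\"older filtration, twisting the sub-lift by a large power of $\varepsilon_p^{-(p-1)}$, and lifting the extension class—parallels the paper's, and your Bloch--Kato argument for crystallinity is essentially the paper's Step~4. But there is a genuine gap where you claim $H^2(G_{K'}, V) = 0$. From $H^0(G_{K'}, V^\vee(1) \otimes \Qp) = 0$ you get only $H^2(G_{K'}, V \otimes \Qp) = 0$, i.e.\ that $H^2(G_{K'}, V)$ is a torsion $\O_E$-module, not that it vanishes. The obstruction to surjectivity of $H^1(G_{K'}, V) \to H^1(G_{K'}, V/\varpi V)$ is $H^2(G_{K'}, V)[\varpi]$, which by Tate duality is controlled by $(V^\vee(1) \otimes E/\O_E)^{G_{K'}}/\varpi$; this can be nonzero even when $(V^\vee(1) \otimes \Qp)^{G_{K'}} = 0$ (e.g.\ if some constituent of $V^\vee(1)$ is a nontrivial character congruent to $1$ modulo $\varpi$).

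The paper confronts exactly this obstruction and does not claim it vanishes for free. After the twist they observe only that $(M^\vee \otimes E/\O_E)^{I_K}$ is \emph{finite} (since $(M^\vee)^{I_K} = 0$), choose a finite unramified $K''$ so that this finite set equals the $G_{K''}$-invariants, and then pass to the degree-$p$ unramified extension $K'/K''$. The point is that corestriction from $K'$ to $K$ now factors through multiplication by $p$ on this group, so by Lemma~\ref{lem:res-cor} the restriction of the obstruction class in $H^2[\varpi]$ dies over $K'$. This additional unramified enlargement—not merely the cyclotomic twist—is what makes the argument work, and it is missing from your proof.
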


\begin{proof}
\textbf{Step 1.} We will first show that there exists a lift, and make it to be crystalline in the final step.
By Definition \ref{convention: big enough E}, we can suppose that $\bar \rho $ is a successive extension of a sequence of irreducible representations $\Vbar _i $. And for each $i$, we let $V_i$ denote a fixed strict crystalline $\O_E$-lift (with non-positive Hodge-Tate weights) of $\Vbar _i$.
It will suffice to prove the following:
\begin{itemize}[leftmargin=*]
  \item (Statement A): There exists a finite unramified extension ${K'}/K$ such that $\bar \rho|_{G_{{K'}}}$ can be strictly lift to a representation $L'$,  which is a successive extension of $\chi_i V_i$, where $\chi_i$ are characters satisfying $\chi_i \equiv 1 \mod \varpi$. Here $\varpi = \varpi_E$.
\end{itemize}

\textbf{Step 2.} By an obvious induction argument, (Statement A) is reduced to the following:
\begin{itemize}[leftmargin=*]
  \item (Statement B): Suppose that $\bar\rho$ sits in a short exact sequence of $G_K$-representations: $0\to\bar U \to \bar \rho \to \bar V \to 0$, where $U, V$ are $\O_E$-finite free $G_K$-representations. Then there exists a finite unramified extension $K'/K$, and a $G_{K'}$-representation $L'$ over $\O_E$ sitting in the short exact sequence  $0 \to U|_{G_{K'}} \to L' \to \chi V|_{G_{K'}} \to 0$ such that $\bar L=\bar\rho|_{G_{K'}}$.
\end{itemize}
Note that in order to use (Statement B) to prove (Statement A), we could have required either $\bar U$ or $\bar V$ to be irreducible; but actually we do not need this assumption.

As an extension, $\bar \rho$ corresponds to an element $\bar c$ in $H^1 (G_K, \Hom_{k_E} (\Vbar, \Ubar))$, and $\rho$ (if it exists) corresponds to  a element in $H^1 (G_{K'}, \Hom _{\cO_E} (\chi V , U ) )$.
First note the exact sequence
$$0 \to {\rm Hom}_{\cO_E} (\chi V, U) \overset {\varpi}{\to} {\rm Hom}_{\cO_E} (\chi V, U) \to {\rm Hom}_{k_E} (\Vbar, \Ubar) \to 0 .$$
induces the  exact sequence
$$ H^1 (G_K, \Hom _{\cO _E} (\chi V , U)) \to H^1 (G_K, \Hom_{k_E} (\Vbar , \Ubar ))\overset {\delta} {\to} H^2 (G_K, \Hom _{\cO_E}(\chi V , U))[\varpi]\to 0 $$
In general, the strategy is to select $\chi$ to make $\delta (\bar c) = 0 $. But we do not know if such $\chi$ always exists. After restricting everything to $G_{K'}$, we get the following commutative diagram:
$$\xymatrix@C=8pt {H^1 (G_K, \Hom _{\cO _E} (\chi V , U)) \ar[d]^{\res}\ar[r] &  H^1 (G_K, \Hom_{k_E} (\Vbar , \Ubar ))\ar[d]^{\res} \ar[r]^-{\delta} &  H^2 (G_K, \Hom _{\cO_E}(\chi V , U))[\varpi]\ar[d]^{\res}\ar[r] & 0 \\
H^1 (G_{K'}, \Hom _{\cO _E} (\chi V , U)) \ar[r] &  H^1 (G_{K'}, \Hom_{k_E} (\Vbar , \Ubar ))\ar[r]^-{\delta} &  H^2 (G_{K'}, \Hom _{\cO_E}(\chi V , U))[\varpi]\ar[r] & 0}  $$
So it suffices to choose a finite unramified extension $K'/K$, and a character $\chi$ so that $(\res \circ \delta) (\bar c) = 0$.
In fact, we will show that $\res: H^2 (G_K, \Hom _{\cO_E}(\chi V , U))[\varpi] \to  H^2 (G_{K'}, \Hom _{\cO_E}(\chi V , U))[\varpi]$ can be made into the zero map.

Now let $M:  = \Hom _{\cO_E} (\chi V , U)$ and $M^\vee : = \Hom_{\cO_E}(U , \chi V)  (1)$.
By Tate duality in Lemma \ref{lem:res-cor}, we are reduced to show (namely, the following implies Statement B):
\begin{itemize}[leftmargin=*]
  \item (Statement C): We can choose a finite unramified extension $K'/K$, and a character $\chi: G_K \to \cO^\times_E$ such that the corestriction map $\cor: (M ^\vee \otimes E / \cO_E)^{G_{K'}}/ \varpi \to  (M ^\vee \otimes E / \cO_E)^{G_K}/ \varpi$ is the zero map.
\end{itemize}

\textbf{Step 3} To prove (Statement C), we first let $\chi=\varepsilon_p^{-a}$ where $a>>0$ and $p-1\mid a$. We can choose $a$ large enough so that $ (M^\vee) ^{I_K} = \{0\}$ (just by making all Hodge-Tate weights of $M^\vee$ negative). This implies that $(M^\vee \otimes E/ \cO_E) ^{I_K}$ is a finite set, and so we can choose $K''/K$ finite and unramified such that $(M^\vee\otimes E/ \cO_E) ^{G_{K''}} =  (M^\vee \otimes E/ \cO_E) ^{I_K}$.

Let $K'/ K''$ be the unramified extension  with $[K':K'']= p$, and we verify Statement C in this situation.
Note that the corestriction map
$$\Cor : (M ^\vee \otimes E / \cO_E)^{G_{K'}}/ \varpi \to  (M ^\vee \otimes E / \cO_E)^{G_K}/ \varpi$$
is defined by the trace map.
Let $x \in (M^\vee \otimes E/ \cO_E) ^{G_{K'}}$, then  $\Cor (x) = \sum_{g \in \Gal (K'/K)} g(x)$.
Since we have $$ (M ^\vee \otimes E / \cO_E)^{G_{K'}} = (M ^\vee \otimes E / \cO_E)^{G_{K''}}  =(M^\vee \otimes E/ \cO_E) ^{I_K},$$
so  $g(x)= x$ for any $g \in \Gal (K'/K'')$. Hence,  $\Cor (x) = p \sum_{g \in \Gal (K''/K)} g(x)$. Since $x$ is also in $(M ^\vee \otimes E / \cO_E)^{G_{K''}}$, we see that $\sum_{g \in \Gal (K''/K)} g(x)$ is in
$ (M ^\vee \otimes E / \cO_E)^{G_K}$, and so  $\Cor (x)=0$. This completes the proof of Statement C.

\textbf{Step 4}
To make $L'$ to be crystalline, it suffices to choose $\chi=\varepsilon_p^{-a}$ where $a>>0$ and $p-1\mid a$. When $a$ is sufficiently large, e.g., when $-a+1$ is smaller than all Hodge-Tate weights of $U$, then $L'$ is crystalline by \cite[Prop 1.24, Prop 1.26]{Nek}.
\end{proof}

\begin{corollary} \label{rem min HT wts}
With notations as in Theorem \ref{thm-potential-lift}, then the strict crystalline lift of $\bar \rho|_{G_{K'}}$ can be constructed such that its Hodge-Tate weights are in the range $[-(p^{fd}-2), 0]$.
\end{corollary}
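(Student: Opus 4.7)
The plan is to refine the inductive construction in the proof of Theorem~\ref{thm-potential-lift} so as to keep the Hodge-Tate weights of $L'$ inside the claimed interval. The crucial observation is that the $V_i$'s appearing there are crystalline lifts of the $G_K$-Jordan--H\"older factors of $\bar\rho$, so by Lemma~\ref{rmk: big E}(1) they can be taken with Hodge-Tate weights in $[-(p^{fd_i}-2),0]$, where $d_i=\dim_{k_E}\bar V_i$ and $\sum_i d_i=d$. Setting $D_k:=d_1+\cdots+d_k$, the key identity is
\begin{equation*}
p^{fD_k}\,-\,p^{fd_k}\;=\;p^{fd_k}\bigl(p^{fD_{k-1}}-1\bigr)\;\ge\;p^{fD_{k-1}}-1,
\end{equation*}
which is precisely what will make the induction go through.

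More precisely, take $\chi_1=1$ and set $U_1:=V_1$, so that the HT weights of $U_1$ already lie in $[-(p^{fD_1}-2),0]$. For the inductive step, suppose the partial lift $U_{k-1}$ has Hodge-Tate weights in $[-(p^{fD_{k-1}}-2),0]$. Following Steps~3 and~4 of the proof of Theorem~\ref{thm-potential-lift}, the exponent $a_k$ of the twist $\chi_k=\varepsilon_p^{-a_k}$ must be chosen large enough so that $(\Hom_{\O_E}(U_{k-1},\chi_k V_k)(1))^{I_K}=0$ and so that the HT weights of $\chi_k V_k$ lie strictly below those of $U_{k-1}$ (which forces the extension to be crystalline via \cite[Prop.~1.24, 1.26]{Nek}); both conditions are implied by $a_k\ge p^{fD_{k-1}}-1$. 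On the other hand, for the HT weights of $U_k$ to remain inside $[-(p^{fD_k}-2),0]$ one needs
\begin{equation*}
a_k+(p^{fd_k}-2)\le p^{fD_k}-2,\qquad\text{i.e.,}\qquad a_k\le p^{fd_k}\bigl(p^{fD_{k-1}}-1\bigr).
\end{equation*}

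By the identity above, the interval $[p^{fD_{k-1}}-1,\,p^{fd_k}(p^{fD_{k-1}}-1)]$ is nonempty for all $k\ge 2$; its length $(p^{fd_k}-1)(p^{fD_{k-1}}-1)\ge(p-1)^2$ is at least $p-1$, so it contains a multiple of $p-1$, producing an admissible $a_k$ that also guarantees $\chi_k\equiv 1\pmod\varpi$ as required in the original construction. Iterating until $k$ reaches the length of the Jordan--H\"older filtration gives $D_k=d$, producing a strict crystalline lift of $\bar\rho|_{G_{K'}}$ with Hodge-Tate weights in $[-(p^{fd}-2),0]$. The main obstacle is precisely this reconciliation of the lower bound on $a_k$ (forced by the cohomological vanishing and crystallinity constraints) with the upper bound (imposed by the desired HT-weight window); the elementary inequality displayed above is exactly what bridges the two, and underlies the shape $p^{fd}-2$ of the final estimate.
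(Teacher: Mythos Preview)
Your argument is essentially the same as the paper's: both induct on the Jordan--H\"older length and bound the cyclotomic twist at each step so that the accumulated Hodge--Tate weights stay in $[-(p^{fD_k}-2),0]$. The paper is slightly more direct in that it simply sets $a=p^{fD_{k-1}}+p-2$ (visibly divisible by $p-1$) and checks the single inequality $(p^{fd_k}-2)+(p^{fD_{k-1}}+p-2)\le p^{fD_k}-2$, rather than delimiting an admissible interval for $a_k$ as you do.

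One small imprecision: the crystallinity criterion in Step~4 of Theorem~\ref{thm-potential-lift} requires $-a_k+1$ to be \emph{strictly} less than every Hodge--Tate weight of $U_{k-1}$, and the vanishing in Step~3 likewise needs all Hodge--Tate weights of $M^\vee$ to be negative; with $\min\mathrm{HT}(U_{k-1})$ possibly equal to $-(p^{fD_{k-1}}-2)$ this forces $a_k\ge p^{fD_{k-1}}$, not $p^{fD_{k-1}}-1$. This does not damage your argument: the corrected interval $[\,p^{fD_{k-1}},\,p^{fd_k}(p^{fD_{k-1}}-1)\,]$ is still nonempty (its length is $(p^{fD_{k-1}}-1)(p^{fd_k}-1)-1\ge (p-1)^2-1\ge p-2$) and hence still contains a multiple of $p-1$.
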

\begin{proof}
We prove by induction on the number of Jordan-H\"{o}lder factors of $\bar\rho$. When $\bar\rho$ is irreducible, this is Lemma \ref{rmk: big E}. When $\bar\rho$ sits in a short exact sequence as in (Statement B) above, then via induction hypothesis, the Hodge-Tate weights of $U$ (resp. $V$) are in the range $[-(p^{fd_1}-2), 0]$ (resp. $[-(p^{fd_2}-2), 0]$), where $d_1, d_2$ are the $k_E$-dimensions of $\bar U, \bar V$, and $d_1+d_2=d$. By the argument in Step 4 of the proof above, we can choose $a$ to be $p^{fd_1}+p-2$ (which is divisible by $p-1$), then it is easy to see that the minimal Hodge-Tate weight of $L'$ is $\ge -(p^{fd_2}-2) - (p^{fd_1}+p-2) \ge -(p^{fd}-2)$.
\end{proof}

\begin{lemma}\label{lem:res-cor}
Let $A$ be a finite $\O_E$-module killed by $p$-power with a continuous $\O_E$-linear $G_K$-action and $L/K$ a finite extension.  Write $A^\vee : = \Hom_{\cO_E} (A , E/ \cO_E) (1)$ the Tate dual of $A$.
Then the corestriction map $$\Cor: H^0 (G_L,  A^\vee) \to H^0 ( G_K, A^\vee)$$
 is the Tate dual of the map
 $$\res: H^2 (G_K, A) \to H^2 (G_L , A).$$
\end{lemma}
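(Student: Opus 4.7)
The plan is to deduce this directly from the functoriality of local Tate duality under finite base change. Recall that for any finite $G_K$-module $A$ killed by a power of $p$, local Tate duality provides perfect pairings
\[
\langle -, - \rangle_{K}\colon H^{i}(G_K, A) \times H^{2-i}(G_K, A^{\vee}) \longrightarrow H^{2}(G_K, \mu_{p^{\infty}}) \xrightarrow{\;\t{inv}\;} \Q_p/\Z_p,
\]
and similarly with $K$ replaced by $L$. These pairings are just cup product followed by the trace/invariant map, so they are manifestly functorial in the module. Thus the first step is to identify the claim as an instance of the standard compatibility between restriction and corestriction under these pairings.

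Second, I would invoke (or briefly verify) the adjointness formula
\[
\langle \res_{L/K}(\alpha),\, \beta \rangle_{L} \;=\; \langle \alpha,\, \cor_{L/K}(\beta) \rangle_{K}
\]
for $\alpha \in H^{i}(G_K, A)$ and $\beta \in H^{2-i}(G_L, A^{\vee})$. This is a general formal consequence of two facts: (i) the projection formula for cup products with respect to restriction and corestriction, namely $\cor(\res(x) \cup y) = x \cup \cor(y)$; and (ii) the fact that the invariant map $H^{2}(G_L, \mu_{p^{\infty}}) \to \Q_p/\Z_p$ factors through $\t{inv}_K \circ \cor_{L/K}$, which is the content of the compatibility of invariants under finite base change in local class field theory. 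Combining (i) and (ii) yields the desired adjointness.

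Third, I would specialize this adjointness to $i = 2$, $2 - i = 0$. The pairing identifies $H^{0}(G_K, A^{\vee})$ with the Pontryagin dual of $H^{2}(G_K, A)$ (and similarly over $L$), and under these identifications the adjointness formula is precisely the statement that $\cor\colon H^{0}(G_L, A^{\vee}) \to H^{0}(G_K, A^{\vee})$ is the $\Q_p/\Z_p$-linear dual of $\res\colon H^{2}(G_K, A) \to H^{2}(G_L, A)$. This gives the lemma.

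The only mildly nontrivial point is the projection formula / compatibility of invariants under base change, but both are well known; no estimates or constructions are needed, so there is no real obstacle — the proof is essentially a citation of the standard package of local Tate duality (e.g., as in Neukirch–Schmidt–Wingberg or Serre's \emph{Cohomologie Galoisienne}).
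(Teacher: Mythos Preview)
Your proposal is correct and follows essentially the same approach as the paper: both reduce to the projection formula $\cor(\res(x)\cup y)=x\cup\cor(y)$ together with the compatibility of the invariant maps under corestriction from local class field theory. The paper simply spells out the second ingredient explicitly via commutative diagrams (working with $\mu_{p^m}$ for a fixed $m$), whereas you cite it as standard.
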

\begin{proof}
This is well-known for experts but we include the proof here for completeness.
Suppose that $A$ is killed by $p ^m$. The Tate duality is induced by cup product $\cup: H ^ 2 (G _K , A ) \times H ^0 (G_K , A^\vee) \to H^2 (G_K, \mu _{p^m} (\Qpbar)) \simeq \Z/ p^m \Z$. Indeed we have the following commutative diagram:
$$\xymatrix{ H^2 (G_K, A ) \ar[d]^{\res} & \times & H^0 (G_K, A^\vee )  \ar[r] ^-\cup & H^2 (G_K , \mu_{p ^m } (\Qpbar)) \\ H^2 (G_L , A ) &   \times & H^0 (G_L , A^\vee )\ar[u]^{\Cor}  \ar[r] ^-\cup & H^2 (G_L , \mu_{p ^m } (\Qpbar)) \ar[u]^{\Cor}  }  $$
That is, for any $x \in H^2 (G_K, A)$ and any $y \in H^0 (G_L , A^\vee)$, we have
\begin{equation}\label{eqn:cup}
\Cor (\res(x) \cup y) = x \cup \Cor (y).
\end{equation}
 This is proved in, for example, \cite[Prop 9 (iv)]{AW}. Now suppose there is a commutative diagram:
\begin{eqnarray}\label{diag:cor}
\begin{split}
\xymatrix{  H^2 (G_K , \mu_{p ^m } (\Qpbar)) \ar[r] ^-{\sim}  &  \Z/ p^m \Z \ar@{=}[d] \\
H^2 (G_L  , \mu_{p ^m } (\Qpbar)) \ar[u] ^{\Cor}  \ar[r] ^-\sim &  \Z/p^m \Z }
\end{split}
\end{eqnarray}
Note that  we can identify $H^0 (G_K, A^\vee) $ with $\Hom_{\Z/ p^m \Z} ( H^2 (G_K, A ), \Z/ p^m \Z)$ via duality induced by cup product. More precisely, any $y \in H^0 (G_K, A^\vee)$ is identified with the map $f_y: H^2 (G_K, A )\to \Z/ p^m \Z$; $x \mapsto x \cup y  $.
We  also  identify $H^0 (G_L , A^\vee) $ with $\Hom_{\Z/ p^m \Z} ( H^2 (G_L, A ), \Z/ p^m \Z)$ in a similar way. Now we see that the map $\res : H^2 (G_K, A) \to H^2 (G_L , A)$ induces a dual map $\Hom_{\Z/ p^m \Z} ( H^2 (G_L, A ), \Z/ p^m \Z) \to \Hom_{\Z/ p^m \Z} ( H^2 (G_K, A ), \Z/ p^m \Z)$ by  $f \mapsto f\circ \res.$ Then for $x \in H^2 (G_K, A)$, we have $(f_y \circ \res)  (x)= \res (x) \cup y  $, which is just $x \cup \Cor (y)$ by \eqref{eqn:cup} and diagram \eqref{diag:cor}. This means the dual map of $\res$ is $f_y \mapsto f_{\Cor (y)}$. This proves the lemma.

Now it suffices to prove that diagram \eqref{diag:cor} is commutative. First recall that from the main theorem of local class field theory, we have the following commutative diagram
 $$\xymatrix{H^2 (G_K , \Qpbar ^\times)\ar[d]^{\inv_K}  \ar[r]^-{\res} &  H^2 (G_L , \Qpbar ^\times)\ar[d]^{\inv_L}\\ \Q/\Z \ar[r]^{[L:K]} &  \Q/\Z }$$
and (here we use Hilbert 90)
 $$\xymatrix{ 0 \ar[r] & H^2 (G_K , \mu_{p^m }(\Qpbar))\ar[d] ^{\sim} \ar[r] & H^2 (G_K , \Qpbar ^\times)\ar[d]^{\inv_K}  \ar[r]^-{p^m } &  H^2 (G_K , \Qpbar ^\times)\ar[d]^{\inv_K} \ar[r]& 0
 \\0 \ar[r] & \Z/ p^m \Z \ar[r]^-\sim & \Q/\Z \ar[r]^{p^m} &  \Q/\Z \ar[r] & 0  }$$
So in particular, the isomorphism $ H^2 (G_K , \mu_{p^m }(\Qpbar)) \simeq \Z/ p^m \Z$ is induced by $\inv_K$. Since $\Cor \circ \res =[L:K] $, we have the following diagram
$$\xymatrix{H^2 (G_L , \Qpbar ^\times)\ar[d]^{\inv_L}  \ar[r]^-{\Cor} &  H^2 (G_K , \Qpbar ^\times)\ar[d]^{\inv_K}\\ \Q/\Z \ar[r]^{1} &  \Q/\Z }$$
Combing this diagram and previous diagram, we have proved diagram \eqref{diag:cor}.
\end{proof}

\subsection{Loose crystalline lifts}
\begin{thm}\label{thm-cryslift}
Suppose $\bar \rho : G_K \to \GL_d (\Fp)$ is a Galois representation. Suppose $E$ is big enough for $\bar \rho\otimes_{\Fp}k_E$.
Then there exists loose crystalline lifts for $\bar\rho$, which can be made to be finite free over $\O_E$.
\end{thm}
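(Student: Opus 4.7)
The plan is to bootstrap Theorem \ref{thm-potential-lift} via induction from an unramified extension, exploiting the fact that crystallinity is insensitive to unramified base change.

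First, I would apply Theorem \ref{thm-potential-lift} to obtain a finite unramified extension $K'/K$ together with a strict crystalline $\O_E$-lift $\rho' : G_{K'} \to \GL_d(\O_E)$ of $\bar\rho|_{G_{K'}}$, with non-positive Hodge--Tate weights (in fact bounded by Corollary \ref{rem min HT wts}). Since strict means that $\rho' \bmod \varpi \simeq \bar\rho|_{G_{K'}}$, we have in particular a surjection $f : \rho' \twoheadrightarrow \bar\rho|_{G_{K'}}$ of $\O_E[G_{K'}]$-modules.

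Next I would form the induced representation
\[
L := \Ind_{G_{K'}}^{G_K} \rho' \;=\; \O_E[G_K] \otimes_{\O_E[G_{K'}]} \rho',
\]
which is finite free over $\O_E$ of rank $d \cdot [K':K]$. By Frobenius reciprocity the surjection $f$ corresponds to an $\O_E[G_K]$-linear map $F : L \to \bar\rho$, explicitly given on simple tensors by $F(g \otimes v) = g \cdot f(v)$; surjectivity of $f$ immediately gives surjectivity of $F$, since any $w \in \bar\rho$ is $F(1 \otimes v)$ for any $v$ with $f(v) = w$. Thus $L$ is a loose $\O_E$-lift of $\bar\rho$ in the sense of Definition \ref{defn loose lift}.

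It remains to check that $L$ is crystalline. Since $K'/K$ is Galois (being unramified), Mackey's formula gives
\[
L|_{G_{K'}} \;\simeq\; \bigoplus_{g \in \Gal(K'/K)} {}^g \rho',
\]
where ${}^g \rho'$ denotes $\rho'$ with $G_{K'}$-action precomposed by $h \mapsto g^{-1}hg$. Each ${}^g \rho'[1/p]$ is crystalline as a $G_{K'}$-representation, because conjugation by $g \in G_K$ is an automorphism of $G_{K'}$ that is compatible with the $G_K$-action on $B_{\cris}$, and hence preserves the subcategory of crystalline representations (one simply has $D_{\cris,K'}({}^g \rho'[1/p]) \simeq g \cdot D_{\cris,K'}(\rho'[1/p])$). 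Thus $L|_{G_{K'}}[1/p]$ is crystalline, and since $K'/K$ is unramified, the standard fact that crystallinity descends along unramified extensions (e.g.\ because $K'_0 \otimes_{K_0} D_{\cris,K}(L[1/p]) = D_{\cris,K'}(L|_{G_{K'}}[1/p])$ has the right dimension) implies that $L[1/p]$ is crystalline as a $G_K$-representation. This completes the proof.

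The only conceptual step is the crystallinity of the induction, which is standard; everything else is formal manipulation of Theorem \ref{thm-potential-lift} together with Frobenius reciprocity.
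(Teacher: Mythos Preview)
Your proof is correct and follows essentially the same approach as the paper: apply Theorem \ref{thm-potential-lift} to get a strict crystalline $\O_E$-lift $\rho'$ over an unramified $K'$, then take $L=\Ind_{G_{K'}}^{G_K}\rho'$ and check it surjects onto $\bar\rho$ and is crystalline. The only cosmetic differences are that the paper phrases the surjection via the counit $\Ind\circ\Res(\bar\rho\otimes_{\Fp}k_E)\twoheadrightarrow \bar\rho\otimes_{\Fp}k_E$ (Lemma \ref{lemma ind res}(2)) rather than Frobenius reciprocity, and cites \cite{Patrikis} (via Lemma \ref{lemma ind res}(5)) for crystallinity of the induction rather than arguing directly via Mackey and unramified descent as you do.
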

\begin{proof}
It suffices to show that $\bar \rho\otimes_{\Fp}k_E$ admits loose crystalline $\O_E$-lifts.
By Theorem \ref{thm-potential-lift}, there exists a ${K'}$ so that $\bar \rho\otimes_{\Fp}k_E|_{G_{{K'}}}$ admits a strict crystalline $\O_E$-lift $\rho'$.  Let $L =\Ind^{G_K}_{G_{{K'}}} \rho'$, then $ L / \varpi_E L = \Ind^{G_K}_{G_{{K'}} } \circ \Res^{G_K}_{G_{{K'}} } (\bar \rho\otimes_{\Fp}k_E)$,  which maps surjectively onto $\bar \rho\otimes_{\Fp}k_E$ (see Lemma \ref{lemma ind res}).
\end{proof}

\begin{thm}\label{thm p^n tor loose crys lift}
For $n \in \mathbb Z^{+}$, suppose $T_n$ is a $p^n$-torsion representation of $G_K$ ($T_n$ is not necessarily free over $\mathbb Z/p^n\mathbb Z$). Then there exists loose crystalline lifts for $T_n$.
If we let $T_1:=T_n/pT_n$ and suppose $E$ is big enough for $T_1\otimes_{\Fp}k_E$, then we can always make the loose crystalline lift to be finite free over $\O_E$.
\end{thm}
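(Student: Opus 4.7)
The plan is to induct on $n$, with the case $n=1$ being exactly Theorem~\ref{thm-cryslift}. So assume the result for $p^{n-1}$-torsion, and let $T_n$ be killed by $p^n$. I form the short exact sequence of $G_K$-modules
\[ 0 \to pT_n \to T_n \to T_n/pT_n \to 0, \]
in which $pT_n$ is $p^{n-1}$-torsion and $T_n/pT_n$ is $p$-torsion. Applying the inductive hypothesis to $pT_n$ and Theorem~\ref{thm-cryslift} to $T_n/pT_n$ produces finite free crystalline $\O_E$-representations $L_1,L_2$ of $G_K$ together with surjections $L_1 \twoheadrightarrow pT_n$ and $L_2 \twoheadrightarrow T_n/pT_n$.

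The next step is to build an $\O_E$-linear, $G_K$-equivariant lift $\tilde L_2 \to T_n$ of the surjection $L_2 \twoheadrightarrow T_n/pT_n$, possibly after twisting $L_2$ by a crystalline character and passing to a finite unramified extension. If such a $\tilde L_2$ exists, then $L_1 \oplus \tilde L_2 \to T_n$ is automatically surjective (its image contains $pT_n$ via $L_1$ and surjects onto $T_n/pT_n$ via $\tilde L_2$), and $L_1 \oplus \tilde L_2$ is visibly a finite free crystalline $\O_E$-representation, giving the desired loose lift.

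The obstruction to lifting $L_2 \to T_n/pT_n$ to $L_2 \to T_n$ is a class
\[ c \in H^1\bigl(G_K,\, \Hom_{\O_E}(L_2, pT_n)\bigr). \]
To kill it I would twist $L_2$ by a crystalline character $\chi = \varepsilon_p^{-a}$, with $a$ a positive multiple of $p-1$ chosen so that all Hodge--Tate weights of the Tate dual of $\Hom_{\O_E}(L_2 \otimes \chi, pT_n)$ are negative. The congruence $\chi \equiv 1 \pmod p$ guarantees that the surjection onto the $p$-torsion module $T_n/pT_n$ persists on $L_2 \otimes \chi$. Then, following the strategy of Steps 2--3 in the proof of Theorem~\ref{thm-potential-lift}, I would pass to a finite unramified extension $K''/K$ over which the $G_{K''}$-invariants of the Tate dual stabilize, and let $K'/K''$ be unramified of degree $p$. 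A restriction-corestriction analysis combined with Tate duality (Lemma~\ref{lem:res-cor}) should then force the pullback of $c$ to $H^1(G_{K'}, \Hom_{\O_E}(L_2 \otimes \chi, pT_n))$ to vanish.

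Once the lift $\tilde L_2$ exists over $G_{K'}$, I obtain a surjection $L_1|_{G_{K'}} \oplus \tilde L_2 \twoheadrightarrow T_n|_{G_{K'}}$ from a finite free crystalline $\O_E$-representation of $G_{K'}$. Finally I apply $\Ind_{G_{K'}}^{G_K}$: since $K'/K$ is unramified, induction preserves both crystallinity and $\O_E$-freeness, and the counit $\Ind_{G_{K'}}^{G_K}(T_n|_{G_{K'}}) \twoheadrightarrow T_n$ is surjective (Lemma~\ref{lemma ind res}); composing yields the required loose crystalline $\O_E$-lift of $T_n$.

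The main obstacle is the cohomological vanishing in the inductive step. In the $H^2$ situation of Theorem~\ref{thm-potential-lift}, the identity $\Cor\circ\res = [K':K]$ together with the $p$-torsion coefficients makes the trace argument essentially formal after the stabilization of $H^0$. The present $H^1$ obstruction is more delicate: a class in $H^1$ does not generically die upon restriction to a finite unramified extension, so a careful interplay between the choice of $a$ (to control the twisted Hom module), the choice of $K'/K$ (to exploit the stabilization of invariants), and Tate duality is what must drive the vanishing. This is where I would expect the real technical work to lie.
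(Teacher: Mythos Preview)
Your inductive scaffolding and the final induction step are sound, but the heart of the argument --- killing the class $c\in H^1\bigl(G_K,\Hom_{\O_E}(L_2\otimes\chi,pT_n)\bigr)$ --- does not go through, and you correctly sense this at the end. The mechanism in Theorem~\ref{thm-potential-lift} works because the obstruction there lies in $H^2[\varpi]$, which by Tate duality is controlled by $\cor$ on $H^0$; corestriction on $H^0$ is an honest trace and vanishes after a degree-$p$ unramified step once the invariants have stabilized. For $H^1$ there is no analogous mechanism: restriction on $H^1$ is dual to corestriction on $H^1$ of the dual module, and nothing forces that to vanish. Concretely, already for $n=2$ the module $pT_n$ is $p$-torsion, so the twist by $\chi\equiv 1\pmod p$ leaves the coefficient module $\Hom_{\O_E}(L_2\otimes\chi,pT_2)$ literally unchanged; if, say, $T_2=\Z/p^2$ carries the action $1+p\alpha$ for a homomorphism $\alpha:G_K\to\BF_p$ cutting out a ramified $\Z/p$-extension linearly independent of the cyclotomic one, and $L_2=\O_E$, then the obstruction class works out to $\alpha+a\bar\eta$ (with $\bar\eta$ the mod-$p$ logarithm of $\varepsilon_p$), and no choice of $a$ or of finite unramified $K'$ kills it.

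The paper sidesteps the $H^1$ problem entirely by a different d\'evissage. Instead of peeling off $T_n/pT_n$ and trying to lift a map through $T_n\twoheadrightarrow T_n/pT_n$, it peels off $T_{n-1}:=T_n/p^{n-1}T_n$, uses the inductive lift $\widetilde L_{n-1}\twoheadrightarrow T_{n-1}$, and forms the fiber product $\breve W_n:=\widetilde L_{n-1}\times_{T_{n-1}}T_n$, which surjects onto $T_n$ with no obstruction. Now $\breve W_n$ sits in $0\to p\widetilde L_{n-1}\to\breve W_n\to Z_n\to 0$ with $Z_n:=\breve W_n/p\breve W_n$ genuinely $p$-torsion, and one invokes Statement~B in the proof of Theorem~\ref{thm-potential-lift} to produce a strict crystalline lift $\widetilde Z_n'$ of $Z_n$ over some unramified $K'$; here the obstruction is honestly in $H^2$ and the trace argument applies. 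A second fiber product $\widetilde L_n':=\widetilde Z_n'\times_{Z_n}\breve W_n$ is then $\O_E$-free (extension of free by free), crystalline (since $\widetilde L_n'[\frac 1 p]\simeq\widetilde Z_n'[\frac 1 p]\oplus\widetilde L_{n-1}[\frac 1 p]$), and surjects onto $T_n|_{G_{K'}}$; inducing back to $G_K$ finishes. The conceptual point: fiber products convert ``lift a map through a surjection'' (an $H^1$ problem) into ``realize a $p$-torsion extension class'' (an $H^2$ problem), and only the latter yields to unramified restriction.
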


\begin{convention} \label{conv rep mod}
In the proof of Theorem \ref{thm p^n tor loose crys lift} and \S \ref{subsec exist h}, we will use a lot of representation as well as their associated modules (\'etale $\varphi$-modules or Kisin modules). We will adopt the following convention on notations of these representations and their associated modules (only in Theorem \ref{thm p^n tor loose crys lift} and \S \ref{subsec exist h}).
\begin{enumerate}
  \item We use a letter with $n$ on subscript to mean a torsion object killed by certain $p$-power. For example, we can write $A_n$ as a torsion representation, and $\mathcal A_n$ as its associated \'etale $\varphi$-module. $A_n$ is not necessarily killed by $p^n$ (although sometimes it is); the subscript $n$ could also mean the induction step.
      Note that there does not necessarily exist a finite free object having $A_n$ as its ``reduction".
  \item We use a letter with a tilde to mean a finite free object. For example, we can write $\widetilde B_n$ as a finite free crystalline $\Zp$-representation, $\wt{\mathcal B}_n$ as its associated \'etale $\varphi$-module, and $\wt{\mathfrak B}_n$ as its associated Kisin module. Note here that the subscript $n$ usually indicate the induction step.
  \item We use a letter with breve accent to mean an object having both free part and torsion part (actually, we only use this convention once). For example, we can write $\breve W_n$ as a representation with both $\Zp$-free part and $p$-power torsion part, and $\breve{\mathcal W_n}$ as its associated \'etale $\varphi$-module.
\end{enumerate}
Note that we only adopt this notation system with representations and modules, although it is compatible with usual notations on rings. For example, throughout the paper, we use notations like $\gs_n, \mathcal{O_{\mathcal E}}_n$ to mean reduction modulo $p^n$ of $\gs, \mathcal{O_{\mathcal E}}$.
\end{convention}

\begin{proof}[Proof of Theorem \ref{thm p^n tor loose crys lift}]
We prove by induction on $n$.
When $n=1$, this is Theorem \ref{thm-cryslift}. Suppose the statement is true for $n-1$. Denote $T_{n-1}:=T_n/p^{n-1}T_n$.
By induction hypothesis, we can suppose $f_{n-1} : \widetilde L_{n-1} \onto T_{n-1}$ is a loose crystalline lift such that $\widetilde L_{n-1}$ is finite free over $\O_E$.

Let $\breve W_{n}$ be the cartesian product of $\widetilde L_{n-1} \onto T_{n-1}$ and $T_n \onto T_{n-1}$. We have the following diagram of short exact sequences (of $\Zp[G_K]$-modules).
\begin{equation}\label{Diag-0}
\begin{split}
\xymatrix{ 0 \ar[r] & p^{n-1} T_{n}\ar[d] \ar[r] & \breve W_{n} \ar@{->>}[d]\ar[r] & \widetilde L_{n -1}\ar@{->>}[d] \ar[r] & 0 \\
0 \ar[r] & p^{n-1} T_{n} \ar[r] & T_{n} \ar[r] & T_{n-1} \ar[r] & 0}
\end{split}
\end{equation}
It is obvious that we have $p \widetilde L_{n-1} =p\breve W_{n}$, which has a section to $\breve W_{n}$. And we have the exact sequence
\begin{equation}\label{Eq-exactsq-W-n}
 0 \to p \widetilde L_{n-1}\to \breve W_{n} \to Z_{n} \to 0.
\end{equation}
Here $Z_{n}: = \breve W_{n}/ p \breve W_{n}$, and it sits in the following exact sequence of $G_K$-representations over $\Fp$:
 \begin{equation}\label{Eq-exactsq-Z-n}
 0 \to p ^{n-1} T_n \to Z_{n} \to \widetilde L_{n-1}/ p \widetilde L_{n-1} \to 0.
\end{equation}

Let us point out that at this point, there is a relatively shorter way to create a loose crystalline lift for $T_n$, see Remark \ref{remark shorter way}. However, we need to create a special kind of loose crystalline lift, whose construction will be critically used later (see the proof of Proposition \ref{prop-key}).

Since $T_1 \onto p ^{n -1} T_{n}$ (when $T_n$ is finite free over $\mathbb Z/p^n\mathbb Z$, this surjection is bijective), so $E$ is also big enough for $p ^{n -1} T_{n}\otimes_{\Fp}k_E$.
By Theorem \ref{thm-potential-lift}, there exists $K''/K$ finite unramified such that there exists a strict crystalline $\O_E$-lift
$\widetilde N_{n}''  \onto p ^{n -1} T_{n}\otimes_{\Fp}k_E|_{G_{K''}} $.
Also, $\widetilde L_{n-1}$ is a strict crystalline $\O_E$-lift of $\widetilde L_{n-1}/\varpi_E\widetilde L_{n-1}$. By (Statement B) in the proof of Theorem \ref{thm-potential-lift}, there exists $K'/K''$ finite unramified and $s>>0$, such that we have the following diagram of short exact sequences of $G_{K'}$-representations:
\begin{equation}\label{Diag-2 first apperance}
\begin{split}
\xymatrix{0 \ar[r] &  \widetilde N_{n}' (:=\widetilde N_{n}''|_{G_{K'}})   \ar@{->>}[d] \ar[r] &   \widetilde Z_{n}'  \ar@{->>}[d]\ar[r] & \widetilde L_{n-1}(-s)  \ar@{->>}[d]\ar[r]  & 0 \\
0 \ar[r] & p ^{n -1} T_{n}\otimes_{\Fp}k_E \ar@{->>}[d]\ar[r] & Z_{n}\otimes_{\Fp}k_E    \ar@{->>}[d]\ar[r] & \widetilde L_{n-1}/ p\widetilde L_{n-1}\otimes_{\Fp}k_E   \ar@{->>}[d]\ar[r] & 0\\
0 \ar[r] & p ^{n -1} T_{n}\ar[r] & Z_{n}   \ar[r] & \widetilde L_{n-1}/ p\widetilde L_{n-1}  \ar[r] & 0  ,}
\end{split}
\end{equation}
where the first row are strict crystalline $\O_E$-lifts of the second row, and the maps from the second row to the third row are compatible projections to chosen $\Fp$-direct summands. Note that here for $s$, by Corollary \ref{rem min HT wts}, we only need to have $(p-1)|s$ and $s \ge p^{fd}$ where $d=\dim_{\Fp} T_1 \ge \dim_{\Fp} p^{n-1}T_n$, \emph{i.e.}, we can simply let
\begin{equation} \label{eq s}
  s:= p^{fd}+p-2.
\end{equation}

Define $ \widetilde L_{n}': =  \widetilde Z_{n}' \times_{Z_n} \breve W_{n} $, where $\widetilde Z_{n}' \to {Z_n}$ comes from diagram \eqref{Diag-2 first apperance}.
The $G_{K'}$-representation $ \widetilde L_{n}'$ sits in the following diagram of short exact sequences:
\begin{equation}\label{Diag-1-NEW}
\begin{split}
\xymatrix{ 0 \ar[r] &  p \widetilde L_{n-1}|_{G_{K'}}\ar[d] \ar[r] & \widetilde L_{n}' \ar@{->>}[d]\ar[r] & \widetilde Z_{n}'\ar@{->>}[d] \ar[r] & 0 \\   0 \ar[r] & p \widetilde L_{n-1}|_{G_{K'}} \ar[r] & \breve W_{n}|_{G_{K'}} \ar[r] & Z_{n}|_{G_{K'}} \ar[r] & 0}
\end{split}
\end{equation}
Clearly $\widetilde L_{n}'$ has to be $\mathcal O_E$-finite free, because it is extension by two $\mathcal O_E$-finite free modules. Note that $\widetilde L_{n}'[\frac 1 p]= \widetilde Z_{n}'[\frac 1 p] \times_{Z_n[\frac 1 p]} \breve W_{n}[\frac 1 p]
 \simeq \widetilde Z_{n}'[\frac 1 p] \oplus \widetilde L_{n-1} [\frac 1 p]$, so $\widetilde L_{n}'$ is crystalline. Clearly,
$\widetilde L_{n}'$ maps surjectively onto $\breve W_n|_{G_{K'}}$, and so onto $T_n |_{G_{K'}}$.

Let $\widetilde L_{n}:= \Ind_{G_{K'}}^{G_K} \widetilde L_{n}' $; it is a crystalline $G_K$-representation, and it maps surjectively to $T_n$.
\end{proof}

\begin{remark}\label{remark shorter way}
 As we point out earlier, after equation \eqref{Eq-exactsq-Z-n}, we could give a shorter proof of the above theorem. Indeed, we could directly use a loose $G_K$-crystalline lift $\widetilde Z_{n}^{(2)}$ of $Z_n$, and take $\widetilde L_{n}^{(2)}$ as the pull back in the following.
\begin{equation}\label{Diag-1}
\begin{split}
\xymatrix{ 0 \ar[r] &  p \widetilde L_{n-1}\ar[d] \ar[r] & \widetilde L_{n}^{(2)} \ar@{->>}[d]\ar[r] & \widetilde Z_{n}^{(2)}\otimes_{\Zp}\O_E \ar@{->>}[d] \ar[r] & 0 \\
0 \ar[r] & p \widetilde L_{n-1} \ar[r] & \breve W_{n} \ar[r] & Z_{n} \ar[r] & 0}
\end{split}
\end{equation}
Similarly, $\widetilde L_{n}^{(2)}$ is $\mathcal O_E$-finite free and crystalline, and it is a loose crystalline lift of $T_n$.
\end{remark}

\begin{remark} \label{rem linear growth}
\begin{enumerate}
  \item For any $n \ge 1$, let $-h_n$ be the minimal Hodge-Tate weight of $\widetilde L_{n}$ .
Then from the construction in diagrams \eqref{Diag-2 first apperance} and \eqref{Diag-1-NEW}, it is easy to see that $h_n=h_{n-1}+s$. So we have $h_n = h_1 +(n-1)s, \forall n \ge 1$, \emph{i.e.}, $h_n$ grows \emph{linearly}!  We will also see later (e.g., in the proof of Lemma \ref{lemma bound h}), that the ``growth rate" $s$ plays a role in overconvergence.

  \item  By Corollary \ref{rem min HT wts} and Equation \eqref{eq s}, we have
  $h_n \le n(p^{fd}+p-2)-p.$

  \item Suppose we have a finite free $\Zp$-representation $T$, and let $T_n=T/p^nT$. As we see,
  the ``crystalline weight" $h_n$ of $T_n$ grows linearly. It is intriguing to ask if $h_n$ can grow even more slowly. By the main result in \cite{Gao17}, the linear growth is indeed the best we can hope for; in fact, if $h_n$ grows in a certain log-growth as in \emph{loc. cit.}, then $T$ will be forced to be crystalline (or semi-stable, in the more general setting of \emph{loc. cit.}).
\end{enumerate}
\end{remark}

\section{Maximal Kisin models} \label{sec max kisin}
In this section, we study liftable Kisin models in torsion \'etale $\varphi$-modules, and show that they admit a (unique) maximal object. We study the finite free $\gs_1$-pieces of these maximal models. We also show that these maximal models are ``invariant" under finite unramified base change.

\subsection{Maximal Kisin models and devissage}
For $n \in \mathbb Z^{+}$, suppose $T_n$ is a $p^n$-torsion representation of $G_K$ ($T_n$ is not necessarily free over $\mathbb Z/p^n\mathbb Z$). Let $M_n$ be the corresponding \'etale $\varphi$-module. Recall that in the torsion case, a Kisin module $\M \subset M_n $ is called a \emph{Kisin model} if $\M[\frac 1 u] = M_n$.

\begin{defn}
  A Kisin model $\M$ is called \emph{loosely liftable} (in short, liftable), if there exists a $G_{K}$-stable $\Z_p$-lattice $L$ inside a crystalline representation and surjective map $f: L \sto T_n$ such that the corresponding map of \'etale $\varphi$-modules $f : \mathcal L \sto  M_n$  satisfies $f(\L)=\M$, where $\mathcal L$ (resp. $\L$) is the \'etale $\varphi$-module (resp. Kisin module) for $L$ (see the discussion before Example \ref{ex-cyclotomic}).
In this case, we say that $\M$ can be \emph{realized} by the surjection $f : L \sto T_n$.
\end{defn}

\begin{lemma} \label{lem-liftexist} \begin{enumerate}\item For any $M_n$, a liftable model $\M \subset M_n$ exists.
\item The set of liftable models inside $M_n$ has a unique maximal object (which we will denote as $\M_{(n)}$).
\end{enumerate}
\end{lemma}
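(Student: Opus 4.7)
For part (1), the plan is to invoke Theorem \ref{thm p^n tor loose crys lift} directly. It yields a finite free crystalline $G_K$-representation $\wt L$ with a $G_K$-equivariant surjection $f : \wt L \twoheadrightarrow T_n$. Let $\L$ be the Kisin module attached to $\wt L$ by Theorem \ref{thm-old-covariant}, and let $\mathcal L := \O_\E \otimes_{\gs} \L$ be the associated \'etale $\varphi$-module. By Proposition \ref{prop-cT*}, the map $f$ induces a surjection of \'etale $\varphi$-modules $\mathcal L \twoheadrightarrow M_n$, which restricts to a $\varphi$-equivariant map $\L \to M_n$; its image $\M := f(\L)$ is a finite type $\gs$-submodule of $M_n$ with $\M[1/u] = M_n$ and with $E(u)$-height no larger than that of $\L$. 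Hence $\M$ is a Kisin model, and it is liftable by construction.

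For part (2), the plan has two steps. The first, which is routine, is closure under finite sums. If $\M_1, \M_2$ are liftable, realized by $f_i : \wt L_i \twoheadrightarrow T_n$ with Kisin modules $\L_i$, then $\wt L := \wt L_1 \oplus \wt L_2$ is a lattice in a crystalline representation, the map $(x, y) \mapsto f_1(x) + f_2(y)$ is a $G_K$-equivariant surjection $\wt L \twoheadrightarrow T_n$, and the associated Kisin module is $\L_1 \oplus \L_2$, whose image in $M_n$ is $\M_1 + \M_2$. Hence $\M_1 + \M_2$ is liftable. Uniqueness of the maximum will then be automatic: if it exists it must equal the sum of all liftable models.

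The second step, which I expect to be the main obstacle, is a boundedness claim: every liftable model $\M \subset M_n$ should be contained in a single finite type $\gs$-submodule $\mathcal B \subset M_n$ depending only on $T_n$. Granted this, the directed family of liftable models lies in $\mathcal B$, which is Noetherian over the Noetherian ring $\gs$, so the sum of all liftable models stabilizes at a finite sum and is therefore liftable by the first step, giving the desired maximum $\M_{(n)}$. The difficulty is that the crystalline lifts realizing different liftable models may have unbounded $E(u)$-heights, so one cannot directly bound the heights uniformly. My approach would be to fix the model $\M_0$ from part (1), use the standard commensurability of Kisin models to write $\M \subset u^{-b(\M)} \M_0$ for each liftable $\M$, and then bound $b(\M)$ uniformly by exploiting that all such $\M$ share the common generic fiber $M_n$, together with Remark \ref{rem linear growth}, which controls the minimal crystalline height needed to realize $T_n$.
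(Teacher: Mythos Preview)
Your argument for part (1) and for closure under finite sums in part (2) is correct and matches the paper exactly.

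The gap is in your second step. Remark \ref{rem linear growth} does not do what you want it to do: it bounds the Hodge--Tate weights of the \emph{specific} inductively constructed lifts $\wt L_n$, but says nothing about an arbitrary crystalline lift $L \twoheadrightarrow T_n$. A liftable model $\M$ may be realized by a lift of arbitrarily large $E(u)$-height (for instance, replace $L$ by $L(-s)$ for $s$ a large multiple of $(p-1)p^{n-1}$), so ``the minimal crystalline height needed to realize $T_n$'' gives no control over the height of the particular realization, and hence no direct control over $b(\M)$. Sharing the generic fiber $M_n$ only gives commensurability $\M \subset u^{-b(\M)}\M_0$ for some $b(\M)$ depending on $\M$; the whole difficulty is uniformity, and nothing in your sketch supplies it.

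The paper sidesteps this entirely. Rather than bounding heights, it invokes \cite[Cor.~3.2.6]{liu-car1}: any ascending chain in the set $\textnormal{F}_{\gs}^{\infty}(M_n)$ of \emph{all} Kisin models of $M_n$ (with no height restriction) stabilizes. This is a structural result about torsion Kisin modules, proved in \cite{liu-car1}, and it is the genuine input here. Since liftable models form a subset of $\textnormal{F}_{\gs}^{\infty}(M_n)$, the ascending chain condition transfers, and combined with closure under finite sums one gets the unique maximal object. If you want to avoid citing \cite{liu-car1}, you would need to reprove that stabilization result, which is not a consequence of the height bounds in \S\ref{sec-lift}.
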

\begin{proof}
 Item (1) is direct corollary of Theorem \ref{thm p^n tor loose crys lift} (see also the paragraph after Lemma \ref{lem model}).

For Item (2),
denote the set of all liftable models inside $M_n$ as $\textnormal{LF}_{\gs}^{\infty}(M_n)$. Indeed, this notation imitates the notations in \cite[Def. 3.2.1]{liu-car1}, where $\textnormal{F}_{\gs}^{\infty}(M_n)$ denotes the set of all Kisin models (not necessarily liftable).
Let us emphasize here that in \cite[Def. 3.2.1]{liu-car1}, $r$ is allowed to be $\infty$ (\emph{cf.} \cite[\S 2.1]{liu-car1}).
Clearly, $\textnormal{LF}_{\gs}^{\infty}(M_n)$ (which is a subset of $\textnormal{F}_{\gs}^{\infty}(M_n)$) is partially ordered.

Let$\M$ and $\M'$ be two liftable Kisin models of $M_n $ realized by $f : L \to T_n$ and $f' : L'\to T_n$ respectively. Then the lift $g: L \oplus L' \to T_n $ defined by $g (x, y)= f(x)+ f'(y)$ realizes $\M+\M'$. So the set $\textnormal{LF}_{\gs}^{\infty}(M_n)$ admits finite supremum.

For a chosen $\m \in \textnormal{LF}_{\gs}^{\infty}(M_n)$, any ascending chain in $\textnormal{LF}_{\gs}^{\infty}(M_n)$ of the form $\M \subset \M' \subset \M'' \cdots \subset M_n$ must stablize after finite steps, since by \cite[Cor. 3.2.6]{liu-car1}, it stablizes after finite steps as a chain in $\textnormal{F}_{\gs}^{\infty}(M_n)$. This implies that $\textnormal{LF}_{\gs}^{\infty}(M_n)$ has a unique maximal object.
\end{proof}

\subsubsection{Some notations}\label{subsubsec notations pieces}
In this subsubsection, we introduce some notations which will be used later.

Let $\M \in {'\sfi}$ be a torsion Kisin module such that $M: = \M[\frac 1 u] $ is a finite free $\gs_n [\frac 1 u]$-module (i.e., the torsion $G_\infty$-representation associated to $M$ is finite free over $\mathbb Z/p^n \mathbb Z$).
In general, $\M$ does not have to be $\gs_n$-free. By we can always use the following technique to d\'evissage $\M$ to finite $\gs_1$-free pieces.

For $i \le n$, let $M_i:=M/p^i M$, and let $q_i : M \to M _i$ be the natural projection. Then $q_i ( \M) \subset M_i$ is a Kisin model of $M_i$. Obviously, $q_i (\M)$
is the image of the natural map $\M/ p ^i\M \to M_i$. Following the discussion above \cite[Lem. 4.2.4]{liu2}, for each $0 \le i < j \le n$, we define
$$ \M ^{i, j}:= \t{Ker}  (p ^i \M \overset{p ^{j-i}}{\longrightarrow} p ^ j \M ).$$
By  the natural isomorphism $p ^{n-i}M/ p ^{n -j} M \simeq M_{i-j}$, we have $p ^{n-i} \M\simeq q_i  (\M)$ and  $p ^l \M^{i, j} \simeq \M ^{i+l , j}$.
Hence we have
$$p ^{n-1}\M = \M ^{n-1, n} \subset \M ^{n-2, n-1} \subset \cdots \subset \M^{0,1 }\subset M_1. $$
Indeed, it is not hard to see that $$\M ^{i, i +1} = p ^i \M \cap \Ker (p) = p ^i \M \cap p ^{n -1} M. $$
Note that $q_i (\M)$, $\Ker (q_i|_{\M})$ and $\M^{i, i+1}$ are all objects in $'\sfi$.

\subsubsection{Some more notations}\label{subsubsec notations free T}
In this subsubsection, we introduce some more notations which will be used later.

Let $T$ be a $\Zp$-finite free $G_K$-representation, and set $T_n : = T/p ^n T$.
Let $M$ be the finite free \'etale $\varphi$-module corresponding to $T$, and set $M_n:=M/p^n M$.

Denote the natural projection $q_{j ,i } : M_j \onto  M_i$ for $i < j$ induced by modulo $p^i$.
Recall that we use $\M_{(j)}$ to denote the maximal liftable Kisin model of $M_j$.
Set $\M _{(j , i ) } = q_{j ,i} (\M_{(j)})$.
It is easy to see that $\M_{(j , i)}$ is liftable. Indeed, if $f : L \to T_j$ realize $\M_{(j)}$, then $f' : L \to T_j \onto T_i$ realize $\M_{(j, i)}$.
So $\M_{(j, i)} \subset \M_{(i)}$ by construction.

For $i<j$, we denote $\iota_{i, j}: M_i \to M_j$ the injective map where for $x \in M_i$, we choose any lift $\hat x \in M_j$, and let $\iota_{i, j}(x) =p^{j-i} \hat x$. This is clearly well-defined, and we will use it to identify $M_i$ with $\iota_{i, j}(M_i)= M_j[p^i]=p^{j-i}M_j$ (recall that the notation $M_j[p^i]$ denotes the $p^i$-torsion elements). The maps $\iota_{i, j}$ are clearly transitive; namely, $\iota_{i, j}\circ \iota_{j, k}=\iota_{i, k}$. Also, the composite
$ M_i \overset{\iota_{i, j}}{\longrightarrow}  M_j \overset{q_{j, i}}{\longrightarrow}   M_i $ is precisely the map $ M_i \overset{\times p^{j-i}}{\longrightarrow} M_i$.

\begin{lemma}
Use notations in \ref{subsubsec notations free T}. In particular, for $i<j$, we identify $M_i$ with $M_j[p^i]$. Then $\m_{(j)}[p^i] = \m_{(i)}$ as Kisin models of $M_i$.
\end{lemma}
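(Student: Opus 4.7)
My plan is to prove both inclusions between $\m_{(j)}[p^i]$ (sitting inside $M_j[p^i]$) and $\iota_{i,j}(\m_{(i)})$ by exhibiting each as the image of a suitable realization and then invoking the maximality of Lemma \ref{lem-liftexist}. Throughout, fix a realization $f_j : L_j \twoheadrightarrow T_j$ of $\m_{(j)}$ and a realization $f_i : L_i \twoheadrightarrow T_i$ of $\m_{(i)}$, with associated Kisin modules $\mathfrak L_i, \mathfrak L_j$ and \'etale $\varphi$-modules $\mathcal L_i, \mathcal L_j$. I also write $\iota_{i,j} : T_i \hookrightarrow T_j$ for the $G_K$-equivariant embedding given by the same recipe ($t \mapsto p^{j-i}\hat t$) as at the module level.

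For the inclusion $\iota_{i,j}(\m_{(i)}) \subset \m_{(j)}[p^i]$, I combine the two realizations into a single surjection $F : L_i \oplus L_j \twoheadrightarrow T_j$, $(a,b) \mapsto \iota_{i,j}(f_i(a)) + f_j(b)$. The direct sum is a lattice in a crystalline representation with Kisin module $\mathfrak L_i \oplus \mathfrak L_j$, so its image $F(\mathfrak L_i \oplus \mathfrak L_j) = \iota_{i,j}(\m_{(i)}) + \m_{(j)}$ is a liftable Kisin model of $M_j$. Maximality of $\m_{(j)}$ forces $\iota_{i,j}(\m_{(i)}) \subset \m_{(j)}$, and since $\iota_{i,j}(M_i) \subset M_j[p^i]$ by construction, we conclude $\iota_{i,j}(\m_{(i)}) \subset \m_{(j)}[p^i]$.

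The reverse inclusion is the more delicate one; the idea is to carve out a sublattice of $L_j$ whose realization picks out exactly the $p^i$-torsion of $\m_{(j)}$. Let $K_L := \ker f_j \subset L_j$, still a $G_K$-stable lattice in the same crystalline representation (since $T_j$ is $p$-power torsion), with Kisin module $\mathfrak K_L$. Set
\[
L_j^{(i)} := f_j^{-1}(T_j[p^i]) = \{ y \in L_j : p^i y \in K_L \},
\]
which is again a $G_K$-stable lattice in the same crystalline representation. Restricting $f_j$ and post-composing with the canonical isomorphism $T_j[p^i] \simeq T_i$ provided by $\iota_{i,j}^{-1}$ yields a realization $g : L_j^{(i)} \twoheadrightarrow T_i$. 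Invoking the standard fact that for sublattices $L \subset L_j$ in a common crystalline representation the Kisin modules satisfy $\mathfrak L = \mathfrak L_j \cap \mathcal L$ inside $\mathcal L_j$, I identify the Kisin module of $L_j^{(i)}$ with
\[
\mathfrak L_j^{(i)} = \{ y \in \mathfrak L_j : p^i y \in \mathfrak K_L \},
\]
using in particular $\mathfrak K_L = \mathcal K_L \cap \mathfrak L_j$ where $\mathcal K_L := \ker(\mathcal L_j \to M_j)$. A direct computation then yields
\[
g(\mathfrak L_j^{(i)}) = \iota_{i,j}^{-1}\bigl(f_j(\mathfrak L_j^{(i)})\bigr) = \iota_{i,j}^{-1}\bigl(\{ x \in \m_{(j)} : p^i x = 0 \}\bigr) = \iota_{i,j}^{-1}(\m_{(j)}[p^i]),
\]
exhibiting $\iota_{i,j}^{-1}(\m_{(j)}[p^i])$ as a liftable Kisin model of $M_i$. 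Maximality of $\m_{(i)}$ now delivers $\m_{(j)}[p^i] \subset \iota_{i,j}(\m_{(i)})$.

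The main obstacle is precisely the identification of the Kisin module of the sublattice $L_j^{(i)}$ with the algebraic candidate $\{ y \in \mathfrak L_j : p^i y \in \mathfrak K_L \}$: this rests on the functorial compatibility of Kisin's construction with passage to sublattices in a common crystalline representation, which I would invoke from Kisin's original framework (or the Caruso--Liu/Liu machinery used elsewhere in the paper) rather than re-derive. Granted this compatibility, the rest of the argument is a purely formal bookkeeping of realizations, organized around the sublattice $L_j^{(i)}$.
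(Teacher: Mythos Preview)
Your proof is correct and follows essentially the same route as the paper. Your sublattice $L_j^{(i)} = f_j^{-1}(T_j[p^i])$ is exactly the paper's $K := \ker\bigl(L \to T_j \overset{\times p^i}{\longrightarrow} p^i T_j\bigr)$, and the ``intersection formula'' $\mathfrak L = \mathfrak L_j \cap \mathcal L$ you invoke for sublattices is equivalent to the exactness of $0 \to \mathfrak K \to \mathfrak L \to p^i\m_{(j)} \to 0$ that the paper extracts from \cite[Thm.~3.1.3(3), Lem.~3.1.4]{liu-car2}; both formulations amount to the injectivity of $\mathfrak L_j/\mathfrak K \hookrightarrow \mathcal L_j/\mathcal K$.
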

\begin{proof}
Suppose $f: L \onto T_j$ realizes $\m_{(j)}$. Let $g: L \onto T_j \onto p^iT_j$ be the composite map (where the second map is the $\times p^i$ map), and let $K: =\Ker g$. We have the following commutative diagram
\begin{equation}
\begin{split}
\xymatrix{ 0 \ar[r] & K\ar[d]^{f} \ar[r] & L \ar@{->>}[d]\ar[r]^{g} & p^iT_j\ar[d]^{=} \ar[r] & 0 \\   0 \ar[r] & T_j[p^i] \ar[r]& T_j \ar[r] & p^iT_j \ar[r] & 0 }
\end{split}
\end{equation}
where both rows are short exact sequences of $G_K$-representations.
The above diagram induces the following diagram of Kisin modules:
\begin{equation}\label{diag blah}
\begin{split}
\xymatrix{ 0 \ar[r] & \mathfrak K\ar@{->>}[d]^{f} \ar[r] & \mathfrak L \ar@{->>}[d]\ar[r]^{g} &
p^i\m_{(j)}\ar[d]^{=} \ar[r] & 0 \\
& f(\mathfrak K) \ar[r]& \m_{(j)} \ar[r] & p^i\m_{(j)} \ar[r] & 0 }
\end{split}
\end{equation}
Now, the top row of \eqref{diag blah} is short exact by \cite[Thm. 3.1.3(3), Lem. 3.1.4]{liu-car2} (or see the nice summary in \cite[Thm. 5.2]{GLS}). This implies that $f(\mathfrak K)= \m_{(j)}[p^i]$, and so $\m_{(j)}[p^i]$ is a liftable model. By maximality of $\m_{(i)}$, we have $\m_{(j)}[p^i] \subset \m_{(i)}$.

For the other direction, it is clear that $\m_{(i)}+\m_{(j)}$ is a liftable model in $M_j$. So we have $\m_{(i)}\subset \m_{(j)}$, and so $\m_{(i)}\subset \m_{(j)}[p^i]$.
\end{proof}

\begin{corollary} \label{cor identify}
With notations in \ref{subsubsec notations free T}, we have $ \m_{(j)}^{i-1, i} = \m_{(i, 1)}  $ for $j\ge i$.
\end{corollary}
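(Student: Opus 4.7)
The plan is to derive this directly from the previous lemma ($\m_{(j)}[p^i] = \m_{(i)}$, under the identification $M_j[p^i] \simeq M_i$) by unwinding the definition of $\m_{(j)}^{i-1,i}$ and composing the identification maps.

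First I would rewrite $\m_{(j)}^{i-1,i}$ in a more convenient form. By definition,
\[
\m_{(j)}^{i-1,i} \;=\; \Ker\bigl(p^{i-1}\m_{(j)} \xrightarrow{\,p\,} p^{i}\m_{(j)}\bigr) \;=\; p^{i-1}\m_{(j)} \cap \m_{(j)}[p].
\]
A short verification (if $y = p^{i-1}w \in \m_{(j)}$ with $py=0$, then $p^i w = 0$, so $w \in \m_{(j)}[p^i]$; the converse is immediate) gives the identity
\[
p^{i-1}\m_{(j)} \cap \m_{(j)}[p] \;=\; p^{i-1}\bigl(\m_{(j)}[p^i]\bigr).
\]

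Next I would apply the preceding lemma. Under the identification $\iota_{i,j}\colon M_i \xrightarrow{\sim} M_j[p^i] = p^{j-i}M_j$, the lemma yields $\iota_{i,j}(\m_{(i)}) = \m_{(j)}[p^i]$. Hence
\[
\m_{(j)}^{i-1,i} \;=\; p^{i-1}\,\iota_{i,j}(\m_{(i)}).
\]
Now I would compute $p^{i-1}\iota_{i,j}(x)$ for $x \in M_i$: picking a lift $\tilde{x} \in M_j$ of $x$, we have $\iota_{i,j}(x) = p^{j-i}\tilde{x}$, so
\[
p^{i-1}\iota_{i,j}(x) \;=\; p^{j-1}\tilde{x} \;=\; \iota_{1,j}\bigl(q_{j,1}(\tilde{x})\bigr) \;=\; \iota_{1,j}\bigl(q_{i,1}(x)\bigr),
\]
using the transitivity $q_{j,1} = q_{i,1}\circ q_{j,i}$ together with $q_{j,i}(\tilde{x}) = x$. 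Therefore
\[
\m_{(j)}^{i-1,i} \;=\; \iota_{1,j}\bigl(q_{i,1}(\m_{(i)})\bigr) \;=\; \iota_{1,j}(\m_{(i,1)}),
\]
and translating via the identification $M_1 \simeq M_j[p] = p^{j-1}M_j$ (i.e.\ absorbing $\iota_{1,j}$) gives the desired equality $\m_{(j)}^{i-1,i} = \m_{(i,1)}$.

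There is no real obstacle: the entire argument is a short formal computation combining the $p^i$-torsion description of the liftable maximal model (the previous lemma) with the explicit formula relating $\iota_{i,j}$ and $q_{i,1}$. If anything, the only point to be careful about is keeping track of where each object lives ($M_j$ versus $M_i$ versus $M_1$), which is exactly what the identification convention in \ref{subsubsec notations free T} is designed to absorb.
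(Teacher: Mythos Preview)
Your proposal is correct and follows essentially the same approach as the paper: the paper's proof is the one-line observation that $\m_{(j)}^{i-1,i} = p^{i-1}(\m_{(j)}[p^i]) = p^{i-1}\m_{(i)} = \m_{(i,1)}$, and you have simply unpacked each of these equalities (and the implicit identifications via $\iota$ and $q$) in full detail.
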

\begin{proof}
By definition, we have $ \m_{(j)}^{i-1, i}  = p^{i-1}(\m_{(j)}[p^i])$. This is equal to $p^{i-1}\m_{(i)}$ (=$ \m_{(i, 1)}$) by the above lemma.
\end{proof}

\subsection{\'Etale descent of  Kisin modules and Kisin models}
Since the constructions in the previous section involve restriction and induction of representations over finite unramified extensions, in this subsection, we discuss how these operations affect the corresponding Kisin modules.
Since we are only dealing with unramified field extension, the situation here is not too difficult.

Let ${K'}$ be a finite unramified extension of $K$ with residue field ${k'}$,  $K'_0 = W({k'})[\frac 1 p]$ and  $\Gamma : = \gal ({K'}/K) \simeq \gal (K'_0/K_0) \simeq \gal ({k'}/k)$. Set $\gs ' = W(k ') \otimes_{W(k)} \gs$. Then $\Gamma$ acts on $\gs'$ and $(\gs') ^\Gamma = \gs$.

Set  $K'_\infty: =K'K_\infty$, and since $K'\cap K_\infty=K$, we have $G'_\infty : =\Gal(\overline K/K'_\infty)= G_{K'} \cap G_\infty$, and $G_\infty/G'_\infty \simeq \Gamma$. For each element $\gamma \in \Gamma$, we fix a lift in $G_\infty$, which we still denote as $\gamma$. In the remainder of this subsection, without further notice, we will use $\gamma$ to mean its lift in $G_\infty$. Since $G_\infty$ acts on $u=[\underline \pi]$ trivially, the $\Gamma$ action we mentioned in the previous paragraph is now the same as the induced action from $G_\infty$ on $\gs \subset W(R)$.

Since we will use induction and restriction of representations a lot, we include the following easy lemma.

\begin{lemma} \label{lemma ind res}  Let $G$ be a topological group, $H \subset G$ a closed normal subgroup of finite index. We use $\Ind$ and $\Res$ to denote the functors $\Ind_H^G$ and $\Res_H^G$ respectively.

Suppose $T$ is a representation of $H$ and $\rho_T : H \to \GL(T)$ denote the action $H$ on $T$. For any $\gamma \in G/H$, let $T^{\gamma}$ be the representation of $H$ acting on $T$ so that  $\rho_{T^\gamma} (h) = \rho_T({\gamma^{-1} h \gamma})$.

  \begin{enumerate}
    \item The functor $\Ind$ is both left and right adjoint of $\Res$.
    \item If $V$ is a $G$-representation, then $\Ind\circ\Res V$ naturally maps surjectively onto $V$.
    \item If $W$ is a $H$-representation, then $\Res\circ \Ind W =\oplus_{\gamma \in G/H}W^\gamma$.
    \item If $V$ is a $G$-representation, then $\Res\circ \Ind \circ \Res V   =\oplus_{\gamma \in G/H} \Res V$.

        \item Suppose we have $G=G_K, H=G_{K'}$. If $L$ is a crystalline $G_{K'}$-representation, then so is $L^\gamma$.
  \end{enumerate}
\end{lemma}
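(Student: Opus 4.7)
All five items are standard, and I would dispatch them briefly in order. For (1), since $[G:H] < \infty$, $\Ind_H^G$ coincides with the coinduction functor, so Frobenius reciprocity exhibits $\Ind$ simultaneously as a left and a right adjoint to $\Res$. For (2), the counit of the adjunction yields the $G$-equivariant map $\Ind \circ \Res V \to V$, $g \otimes v \mapsto g \cdot v$, which is surjective because every $v \in V$ is the image of $1 \otimes v$.

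For (3), I would apply Mackey's double-coset decomposition: since $H$ is normal in $G$, the double-coset space $H \backslash G / H$ collapses to $G/H$, and Mackey's formula directly gives the asserted decomposition $\bigoplus_{\gamma \in G/H} W^\gamma$, with each summand carrying the conjugation twist by a coset representative $\gamma$. For (4), apply (3) to $W = \Res V$ and note that the ambient $G$-action on $V$ furnishes an $H$-equivariant isomorphism $(\Res V)^\gamma \ito \Res V$ via $v \mapsto \gamma \cdot v$; the intertwining identity $\gamma \cdot \rho_V(\gamma^{-1} h \gamma) = \rho_V(h) \cdot \gamma$ is immediate from associativity in $G$.

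For (5), I would fix a lift of $\gamma$ to $G_K$, still denoted $\gamma$; it then acts on all standard period rings, in particular on $B_{\cris}$. The $\Qp$-linear map $1_L \otimes \gamma \colon L \otimes_{\Qp} B_{\cris} \to L^\gamma \otimes_{\Qp} B_{\cris}$ carries a $G_{K'}$-invariant $B_{\cris}$-basis of the source (which exists because $L$ is crystalline) to a basis of the target whose elements are fixed by the $\gamma$-twisted $G_{K'}$-action; this is verified directly using $\gamma \cdot (\gamma^{-1} h \gamma) = h \cdot \gamma$ inside $G_K$. Hence $D_{\cris}(L^\gamma)$ has full dimension over $K'_0$, and $L^\gamma$ is crystalline. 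Only (5) has any real content; the other four items are pure bookkeeping, and even in (5) the key observation --- that the twist on $L$ is absorbed by transporting a trivialization along the natural action of $\gamma$ on $B_{\cris}$ --- presents no genuine obstacle.
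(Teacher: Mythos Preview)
Your treatment of (1)--(4) matches the paper's exactly: finite index makes induction self-adjoint, (2) follows from the counit, (3) is Mackey, and (4) specializes (3) to $W=\Res V$.

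For (5) your argument is correct but takes a different route from the paper. The paper argues indirectly: since $\Ind_{G_{K'}}^{G_K} L$ is crystalline (citing \cite[Lem.~2.2.9]{Patrikis}), its restriction to $G_{K'}$ is crystalline, and by (3) this restriction is $\bigoplus_\gamma L^\gamma$; a summand of a crystalline representation is crystalline, so each $L^\gamma$ is crystalline. Your argument is direct: you transport a $G_{K'}$-invariant $B_{\cris}$-basis along $1_L\otimes\gamma$, using normality of $G_{K'}$ in $G_K$ to verify the twisted invariance. Your approach is self-contained and avoids the external citation, and indeed is essentially the computation underlying the cited lemma; the paper's approach has the virtue of recycling (3) and reducing to a known black box. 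Both are fine for a result this soft.
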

\begin{proof}
(1) is because $H \subset G$ is of finite index. (2) is easy corollary of (1). (3) is via Mackey decomposition. (4) is easy corollary of (3).
To prove (5), note that by \cite[Lem. 2.2.9]{Patrikis}, $\Ind_{G_{K'}}^{G_K} L$ is crystalline, and so $\Res _{G_{K'}}^{G_K} (\Ind_{G_{K'}}^{G_K} L) = \oplus_{\gamma \in \Gamma} L^\gamma$ is a crystalline $G_{K'}$-representation, hence $L^\gamma$ is also crystalline.
\end{proof}

\begin{defn}
Suppose $\M'$ is a Kisin module over $\gs'$. We say that $\M'$ admits a \emph{descent data} if $\Gamma$ acts on $\M'$ semi-linearly and the action commutes with $\varphi_{\M'}$.
\end{defn}

Obviously, if $\M$ is a Kisin module over $\gs$, then $\M' = \gs' \otimes_{\gs} \M$ naturally admits a descent data, and so we can define a functor $\M \rightsquigarrow \M'$.

\begin{prop}\label{prop-etaledescent}
\begin{enumerate}
\item The functor $\M \rightsquigarrow \M' : =  \gs' \otimes_{\gs} \M$ induces an equivalence between the category of torsion Kisin modules over $\gs$ and the category of torsion Kisin modules over $\gs'$ with descent data. The quasi-inverse of the functor is given by
$\M' \rightsquigarrow (\M') ^\Gamma $. We will say $\M'$ \emph{descends} to $\M$.

\item $\M$ is finite $\gs_n$-free if and only if $\M'$ is $\gs'_n$-finite free.
\item $T^*_\gs (\M)|_{G'_\infty} \simeq T ^* _{\gs'} (\M ')$.
\end{enumerate}
\end{prop}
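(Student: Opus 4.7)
The plan is to deduce all three parts from standard faithfully flat (indeed finite étale Galois) descent along $\gs\hookrightarrow\gs'$. First I would record the basic structural facts: $\gs'=W(k')\otimes_{W(k)}\gs$ is finite free over $\gs$ of rank $[k':k]$, one has $(\gs')^{\Gamma}=\gs$, and the Frobenius on $\gs'$ commutes with the $\Gamma$-action (since $\Gamma$ acts by $W(k)$-algebra automorphisms on $W(k')$, and any such automorphism commutes with the canonical Witt-vector lift of the $p$-power map on $k'$). Thus $\gs\hookrightarrow\gs'$ is a $\Gamma$-Galois faithfully flat extension, and one has the usual equivalence between $\gs$-modules and $\gs'$-modules equipped with a compatible semilinear $\Gamma$-action.

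For (1), the base change $\M\mapsto\gs'\otimes_\gs\M$ clearly yields a Kisin module over $\gs'$ together with a descent datum: $\Gamma$ acts on the left tensor factor, commutes with $\varphi_{\M'}=\varphi_{\gs'}\otimes\varphi_\M$, and both defining conditions of ${}'\sfi$ are preserved because $E(u)\in\gs$ and $\gs'$ is $\gs$-flat. For the quasi-inverse I would set $\M:=(\M')^{\Gamma}$; Galois descent (Hilbert 90 for finite free modules over a Galois étale extension) gives $\gs'\otimes_\gs(\M')^{\Gamma}\xrightarrow{\sim}\M'$ and $(\gs'\otimes_\gs\N)^{\Gamma}=\N$ for any $\gs$-module $\N$, so the two functors are mutually quasi-inverse. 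Frobenius restricts to $(\M')^{\Gamma}$ because $\Gamma$ commutes with $\varphi_{\M'}$, and the height and injectivity axioms descend by exactness of $\Gamma$-invariants on $\gs'$-modules with descent data combined with faithful flatness.

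For (2), if $\M\cong\gs_n^d$ then $\M'\cong(\gs'_n)^d$ is $\gs'_n$-free of the same rank. Conversely, if $\M'$ is $\gs'_n$-free of rank $d$, then faithfully flat descent ensures that $\M=(\M')^{\Gamma}$ is a finitely generated projective $\gs_n$-module of rank $d$; since $\gs_n$ is local (with maximal ideal $(p,u)$), $\M$ is $\gs_n$-free. For (3), the embedding $\gs\hookrightarrow W(\FrR)$ (sending $u\mapsto[\upi]$) extends canonically to $\gs'\hookrightarrow W(\FrR)$ via the natural inclusion $k'\subset R$ coming from $K'\subset\overline K$, and hence
\[
\M\otimes_\gs W(\FrR)\;=\;(\gs'\otimes_\gs\M)\otimes_{\gs'}W(\FrR)\;=\;\M'\otimes_{\gs'}W(\FrR),
\]
so passing to $\varphi=1$ invariants identifies $T^*_\gs(\M)$ with $T^*_{\gs'}(\M')$ as $\Zp$-modules. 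The $G'_\infty$-equivariance is automatic because $G'_\infty=G_\infty\cap G_{K'}$ fixes $\gs'\subset W(\FrR)$ pointwise: it fixes $u=[\upi]$ (since $G'_\infty\subset G_\infty$) and it fixes $W(k')\subset W(R)$ (since $k'$ lies in the fixed field of $G_{K'}$ inside $R$).

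The only point that really requires care is verifying that the embedding $\gs'\hookrightarrow W(\FrR)$ used implicitly in (3) is compatible with the $\Gamma=G_\infty/G'_\infty$-action used to form descent data in (1) — in other words, that the descent action on $\gs'$, viewed as a subring of $W(R)$, coincides with the natural $\Gal(k'/k)$-action on $W(k')\otimes_{W(k)}\gs$ (and is trivial on $u$). Once this compatibility is made explicit, everything reduces to routine faithfully flat Galois descent and the three statements fall out.
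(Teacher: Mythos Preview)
Your proposal is correct and follows essentially the same route as the paper: part (1) is attributed to standard \'etale descent (the paper cites \cite[\S6.2, Example~B]{neron}), and part (3) is immediate from the definition of $T^*_\gs$ once one notes that $\M\otimes_\gs W(\FrR)=\M'\otimes_{\gs'}W(\FrR)$ with $G'_\infty$ fixing $\gs'$. The compatibility of the two $\Gamma$-actions on $\gs'$ that you single out at the end is exactly what the paper records just before stating the proposition.

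The only genuine difference is in part (2). You argue abstractly: faithfully flat descent reflects finite projectivity, and since $\gs_n$ is local, projective implies free. The paper instead gives a direct Nakayama-style argument: reduce modulo $(u,p)$ to get $\M'/(u,p)\simeq k'\otimes_k\M/(u,p)$, lift a $k$-basis of $\M/(u,p)$ to a surjection $f:(\gs_n)^{\oplus d}\twoheadrightarrow\M$, observe that $1\otimes f:(\gs'_n)^{\oplus d}\to\M'$ is an isomorphism (being so modulo the maximal ideal), and conclude that $f$ is an isomorphism by faithful flatness of $W(k')/W(k)$. Your argument is cleaner and more conceptual; the paper's is more explicit and avoids invoking descent of projectivity as a black box. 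Both are completely valid.
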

\begin{proof} (1) is a standard consequence of \'etale descent (see \cite[\S 6.2 Example B]{neron}) as $\gs'$ is finite \'etale over $\gs$.
For statement (2), we only need to prove the ``if" direction. We thank an anonymous referee for the following concise proof. Since $\M' =  \gs' \otimes_{\gs} \M$, we have $\M'/(u, p)\M' \simeq k'\otimes_k \M/(u, p)\M$. Lifting any $k$-basis of $\M/(u, p)\M$ (suppose the $k$-dimension is $d$) to $\M$ gives a surjective map $f:(\gs_n)^{\oplus d} \onto \M$. Consider the map $1\otimes f: W(k')\otimes_{W(k)} (\gs_n)^{\oplus d} = (\gs'_n)^{\oplus d} \onto W(k')\otimes_{W(k)} \M=\M'$. $1\otimes f$ is an isomorphism modulo $(u, p)$ and hence an isomorphism itself. Since $W(k')$ is faithfully flat over $W(k)$, $f$ is an isomorphism.
Statement (3) is clear by the definition of $T^*_\gs$ via \eqref{Eq-define TM}.
\end{proof}

\begin{co}\label{cor-descent-free} Suppose that $\M'$ is a finite free Kisin module over $\gs'$ with descent data. Then $\M'$ descends to a finite free Kisin module $\M$ over $\gs$.
\end{co}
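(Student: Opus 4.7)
The plan is to reduce to the torsion case established in Proposition \ref{prop-etaledescent} via a $p$-adic inverse limit construction.

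For each $n \ge 1$, consider the reduction $\M'_n := \M'/p^n\M'$, a torsion Kisin module over $\gs'$ which inherits descent data from $\M'$. By Proposition \ref{prop-etaledescent}(1), $\M'_n$ descends to a torsion Kisin module $\M_n := (\M'_n)^{\Gamma}$ over $\gs$, and by (2) it is finite $\gs_n$-free of rank $d := \rank_{\gs'}\M'$. The canonical surjection $\M'_{n+1} \twoheadrightarrow \M'_n$ is $\Gamma$-equivariant and $\varphi$-compatible, so it descends to a morphism $\M_{n+1} \to \M_n$, which remains surjective: after the faithfully flat base change to $\gs'$ it recovers the original surjection $\M'_{n+1} \twoheadrightarrow \M'_n$, whence surjectivity descends.

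Set $\M := \varprojlim_n \M_n$. Choose any $\gs_1$-basis of $\M_1$ and lift it through the surjective tower $\cdots \twoheadrightarrow \M_{n+1} \twoheadrightarrow \M_n \twoheadrightarrow \cdots$ to a compatible system in $\M$, producing a $\gs$-linear map $\gs^d \to \M$ whose reduction modulo $p^n$ is an isomorphism $\gs_n^d \simto \M_n$ by Nakayama's lemma. Hence $\M$ is $\gs$-finite free of rank $d$. The Frobenius endomorphisms $\varphi_{\M_n}$ are compatible in $n$ and assemble into a map $\varphi_\M : \M \to \M$; the cokernel of $1 \otimes \varphi_\M$ is killed by $E(u)^r$ because this already holds modulo each $p^n$ and $\M$ is $\gs$-flat.

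Finally, the natural map $\gs' \otimes_\gs \M \to \M'$ is an isomorphism: modulo each $p^n$ it is the descent isomorphism $\gs'_n \otimes_{\gs_n} \M_n \simto \M'_n$ supplied by Proposition \ref{prop-etaledescent}(1), and both sides are $p$-adically separated and complete (being finite free over $\gs$ and $\gs'$ respectively), so passing to the limit yields the required isomorphism. The delicate point is the exactness of the descent equivalence, i.e.\ the surjectivity of $\M_{n+1} \to \M_n$; this is precisely where one uses that $\gs \hookrightarrow \gs'$ is a finite \'etale Galois extension with group $\Gamma$, which makes the quasi-inverse $(-)^\Gamma$ exact via faithfully flat descent.
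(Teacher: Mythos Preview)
Your proof is correct. The paper gives no explicit proof of this corollary; the intended argument is that the \'etale (Galois) descent invoked in the proof of Proposition~\ref{prop-etaledescent}(1) (the reference to \cite[\S 6.2, Example B]{neron}) applies to arbitrary $\gs'$-modules, not just torsion ones, so one may set $\M := (\M')^{\Gamma}$ directly and then run the same Nakayama argument as in the proof of part~(2) to see that $\M$ is finite free over $\gs$.

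Your route is genuinely different: you treat Proposition~\ref{prop-etaledescent} as a black box valid only in the torsion case and rebuild the free case by a $p$-adic inverse limit. This is longer but has the virtue of using only the proposition exactly as stated. One small organizational point: your verification of the height condition (``the cokernel of $1\otimes\varphi_\M$ is killed by $E(u)^r$ because this already holds modulo each $p^n$'') tacitly uses that a finitely generated $\gs$-module is $p$-adically separated (via Artin--Rees and the fact that $1+p\gs\subset\gs^\times$); it would be cleaner to first establish the isomorphism $\gs'\otimes_\gs\M\simeq\M'$ and then read off both the height bound and the injectivity of $1\otimes\varphi_\M$ by faithfully flat descent, which is what the paper's direct approach gives for free.
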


\begin{co}\label{lem-descent}
Suppose that $T$ is a torsion representation of $G_K$ and $M$ is the corresponding \'etale $\varphi$-module.
Then $\Gamma$-acts on  $M': = W(k') \otimes_{W(k)}M$ semilinearly.
Suppose $\M' \subset M'$ is a Kisin model such that $\gamma (\M') \subset \M', \ \forall \gamma \in \Gamma$.  Then $\M'$ descends to a Kisin model $\M \subset M$.
\end{co}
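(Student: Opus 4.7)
The plan is to apply Proposition~\ref{prop-etaledescent} directly; the corollary amounts to checking that the hypotheses on $\M'$ package into a descent datum in the sense of that proposition, and then identifying the resulting descended Kisin module as a Kisin model of $M$.

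First I would check that the restricted $\Gamma$-action on $\M'$ is $\gs'$-semilinear and commutes with $\varphi_{\M'}$. The $\Gamma$-action on $M' = W(k')\otimes_{W(k)} M$ acts through the first tensor factor, so it is automatically $\gs'$-semilinear; it commutes with $\varphi_{M'}=\varphi_{W(k')}\otimes\varphi_M$ because $\Gamma = \Gal(k'/k)$ commutes with the Witt-vector Frobenius on $W(k')$. The hypothesis $\gamma(\M')\subset \M'$ for every $\gamma\in\Gamma$ then says that these structures restrict to $\M'$, so $(\M',\Gamma)$ is a torsion Kisin module over $\gs'$ equipped with a descent datum.

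Next, by Proposition~\ref{prop-etaledescent}(1) I would set $\M := (\M')^{\Gamma}$; this is a torsion Kisin module over $\gs$ satisfying $\gs'\otimes_{\gs}\M \simeq \M'$. It remains to see that $\M$ is in fact a Kisin model of $M$, i.e.\ that $\M\subset M$ and $\M[\tfrac1u] = M$. For the inclusion, I would observe $\M\subset (M')^{\Gamma} = W(k')^{\Gamma}\otimes_{W(k)}M = M$ by ordinary \'etale descent along $W(k')/W(k)$. For the second property, I would invert $u$ in $\gs'\otimes_\gs \M \simeq \M'$ to get an identification
\[
\O_{\E'}\otimes_{\O_\E}\M[\tfrac1u] \;\simeq\; \M'[\tfrac1u] \;=\; M' \;=\; \O_{\E'}\otimes_{\O_\E} M,
\]
and then appeal to the faithful flatness of $\O_{\E'}$ over $\O_\E$ (which is the $p$-adic completion of a free extension) to conclude $\M[\tfrac1u] = M$ inside $M$.

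The main point is just the first step of promoting the set-theoretic stability condition to a genuine descent datum; once that is done, everything reduces formally to the equivalence of Proposition~\ref{prop-etaledescent} and basic descent for the unramified base change $W(k')/W(k)$. I do not anticipate any real obstacle beyond bookkeeping.
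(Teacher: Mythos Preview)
Your proposal is correct and is precisely the argument the paper has in mind: the statement is recorded as a corollary of Proposition~\ref{prop-etaledescent} without a separate proof, and your write-up simply unpacks the routine verifications (that the $\Gamma$-stability yields a descent datum, and that the descended module $(\M')^\Gamma$ sits inside $M$ with $\M[\tfrac1u]=M$ by faithful flatness of $\O_{\E'}=W(k')\otimes_{W(k)}\O_\E$ over $\O_\E$).
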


\begin{defn}
Let $\M '$ be a Kisin module over $\gs'$. For any $\gamma \in \Gamma$, set $\M'_{\gamma} : = \gs' \otimes_{\gamma , \gs'}\M'$.
\end{defn}

\begin{lemma}  \label{lemma twist Kisin module}
  $T^*_{\gs'} (\M'_\gamma) \simeq (T^*_{\gs'} (\M'))^\gamma$ as $G'_\infty$-representations.
\end{lemma}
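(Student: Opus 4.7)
The plan is to produce an explicit $\varphi$-equivariant isomorphism
$$ \Psi:\ \M' \otimes_{\gs'} W(\FrR) \;\longrightarrow\; \M'_\gamma \otimes_{\gs'} W(\FrR) $$
built out of the chosen lift of $\gamma$ to $G_\infty \subset G_K$, and to check that after taking $\varphi = 1$ invariants it intertwines the $G'_\infty$-action on the right with the twisted $G'_\infty$-action on the left. Recall from Proposition~\ref{prop-etaledescent} (and the discussion preceding it) that $G_\infty$ preserves $\gs' = W(k')[\![u]\!] \subset W(R)$, with $G'_\infty$ acting trivially and $G_\infty/G'_\infty \simeq \Gamma$; this is what makes the lifted $\gamma \in G_\infty$ interact correctly with the twist.

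First I would rewrite the target: by associativity of tensor product,
$$ T^*_{\gs'}(\M'_\gamma) = \bigl((\gs' \otimes_{\gamma,\gs'} \M') \otimes_{\gs'} W(\FrR)\bigr)^{\varphi=1} = \bigl(\M' \otimes_{\gamma,\gs'} W(\FrR)\bigr)^{\varphi=1}, $$
where in the last tensor product $\gs'$ acts on $W(\FrR)$ through the composition $\gs' \xrightarrow{\gamma} \gs' \hookrightarrow W(\FrR)$. Next I would define $\Psi$ on simple tensors by $\Psi(m \otimes w) := m \otimes \gamma(w)$, using the action of $\gamma \in G_\infty$ on $W(\FrR)$. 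Well-definedness follows from the relation in $\otimes_{\gamma,\gs'}$: for $a \in \gs'$ one has
$$ \Psi(am \otimes w) = am \otimes \gamma(w) = m \otimes \gamma(a)\gamma(w) = m \otimes \gamma(aw) = \Psi(m \otimes aw), $$
the middle equality being exactly the defining relation of $\otimes_{\gamma,\gs'}$. Frobenius-equivariance follows because $\varphi$ and the $G_K$-action on $W(\FrR)$ commute, and $\Psi$ is a bijection with inverse $m \otimes w \mapsto m \otimes \gamma^{-1}(w)$.

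Finally, I would transport the $G'_\infty$-action. For $h \in G'_\infty$ and $m \otimes w \in \M' \otimes_{\gs'} W(\FrR)$,
$$ \Psi\bigl((\gamma^{-1}h\gamma)\cdot(m\otimes w)\bigr) = m \otimes \gamma\gamma^{-1}h\gamma(w) = m \otimes h\gamma(w) = h \cdot \Psi(m\otimes w), $$
so viewing the source as the $\gamma$-twist $(\,\cdot\,)^\gamma$, the map $\Psi$ becomes $G'_\infty$-equivariant (one should note here that $\gamma^{-1} h \gamma$ indeed lies in $G_\infty$ since $G'_\infty$ is normal in $G_\infty$, so the left-hand side makes sense). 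Taking $\varphi=1$ invariants yields the desired $G'_\infty$-equivariant isomorphism $(T^*_{\gs'}(\M'))^\gamma \simeq T^*_{\gs'}(\M'_\gamma)$. The only subtle point, which I expect to be the main thing to check carefully, is the compatibility of the lifted $\gamma$ with the two $\gs'$-module structures in the tensor product; everything else is formal.
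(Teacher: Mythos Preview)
Your argument is correct and is essentially the same as the paper's: the paper simply writes down the inverse of your $\Psi$ (namely $\sum_i x_i \otimes m_i \mapsto \sum_i \gamma^{-1}(x_i) \otimes m_i$) directly on $\varphi$-invariants and asserts the check is routine, while you spell out well-definedness, $\varphi$-equivariance, and the intertwining with the twisted $G'_\infty$-action. One small sharpening: where you note that $\gamma^{-1}h\gamma$ lies in $G_\infty$, what is actually needed (and true, since $K'_\infty/K_\infty$ is Galois) is that it lies in $G'_\infty$, so that the twisted action really defines a $G'_\infty$-representation.
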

\begin{proof}
From the definition of $T^*_{\gs'}$, we can define a map from $T^*_{\gs'} (\M'_\gamma)$ to $(T^*_{\gs'} (\M'))^\gamma$ so that $\sum_i x_i \otimes m_i\mapsto \sum \gamma ^{-1} (x_i) \otimes m_i$ with $x_i \in W(\FrR)$ and $m_i \in \M'$. One can easily check that the map is an isomorphism of $G'_\infty$-representations.
\end{proof}

\subsubsection{} \label{subsubsec notation descent}
Suppose $T_n$ is a $p^n$-torsion representation of $G_K$, $M_n$ the corresponding \'etale $\varphi$-module. Then it is easy to see that $M'_n  := W(k ') \otimes _{W(k)} M_n $ is the corresponding \'etale $\varphi$-module for $T_n|_{G_{K'}}$.

\begin{lemma}\label{new-lem-realization-descent}
With notations in \ref{subsubsec notation descent}. Suppose $f: L \onto T_n|_{G_{K'}}$ is a surjection of $G_{K'}$-representations where $L$ is crystalline. Let $\mathfrak L$ be the Kisin module corresponding to $L$, and denote $\m':=f(\mathfrak L)$. If $\m'$ is $\Gamma$-stable as in Corollary \ref{lem-descent}, namely, $\m'$ descends to a $\gs$-module $\m$, then $\m$ is a liftable Kisin model of $M_n$.
\end{lemma}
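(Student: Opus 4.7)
The plan is to produce a $G_K$-equivariant crystalline surjection onto $T_n$ that realizes $\m$, obtained by inducing $L$ up from $G_{K'}$. Put $\widetilde L := \Ind_{G_{K'}}^{G_K} L$; by the proof of Lemma \ref{lemma ind res}(5), $\widetilde L$ is a crystalline, $\Zp$-finite free representation of $G_K$. Composing $\Ind(f)$ with the counit of Lemma \ref{lemma ind res}(2) yields a $G_K$-equivariant surjection $\widetilde f : \widetilde L \onto T_n$. Let $\widetilde{\mathfrak L}$ be the Kisin module of $\widetilde L$, and set $\widetilde \m := \widetilde f(\widetilde{\mathfrak L})$; then $\widetilde \m$ is tautologically a liftable Kisin model of $M_n$, so it remains to prove $\widetilde \m = \m$.

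I would verify this equality after base change to $\gs'$ and invoke faithful flatness. By Proposition \ref{prop-etaledescent}(3), the module $\gs' \otimes_\gs \widetilde{\mathfrak L}$ is the $\gs'$-Kisin module attached to $\widetilde L|_{G'_\infty}$. Combining the Mackey decomposition $\widetilde L|_{G_{K'}} \simeq \bigoplus_{\gamma \in \Gamma} L^\gamma$ from Lemma \ref{lemma ind res}(3) with Lemma \ref{lemma twist Kisin module}, one obtains a canonical identification
\[
\gs' \otimes_\gs \widetilde{\mathfrak L} \;\simeq\; \bigoplus_{\gamma \in \Gamma} \mathfrak L_\gamma,
\]
under which the natural semilinear $\Gamma$-action on the left-hand side permutes the summands on the right. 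Correspondingly, the base-changed map $\widetilde f : \gs' \otimes_\gs \widetilde{\mathfrak L} \to M'_n$ splits into pieces $\widetilde f_\gamma : \mathfrak L_\gamma \to M'_n$. By the $\Ind$–$\Res$ adjunction, the unit $L \hookrightarrow \Res\,\Ind\,L$ embeds $L$ as the $\gamma = e$ summand, and the composition with $\widetilde f$ recovers $f$, so $\widetilde f_e(\mathfrak L) = \m'$.

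For general $\gamma$, the $\Gamma$-equivariance of the base-changed $\widetilde f$ (with $\Gamma$ permuting the summands on the source and acting semilinearly on $M'_n$) forces $\widetilde f_\gamma(\mathfrak L_\gamma) = \gamma(\m')$. But $\m'$ is $\Gamma$-stable by hypothesis, so $\gamma(\m') = \m'$ for every $\gamma \in \Gamma$, and summing over $\gamma$ gives
\[
\gs' \otimes_\gs \widetilde \m \;=\; \sum_{\gamma \in \Gamma} \widetilde f_\gamma(\mathfrak L_\gamma) \;=\; \m' \;=\; \gs' \otimes_\gs \m.
\]
Faithful flatness of $\gs \hookrightarrow \gs'$ then yields $\widetilde \m = \m$, so $\m$ is realized by $\widetilde f : \widetilde L \onto T_n$ and is therefore liftable. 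The main technical obstacle will be carefully verifying the $\Gamma$-equivariance of the identification $\gs' \otimes_\gs \widetilde{\mathfrak L} \simeq \bigoplus_\gamma \mathfrak L_\gamma$ and the resulting claim $\widetilde f_\gamma(\mathfrak L_\gamma) = \gamma(\m')$; once this functorial bookkeeping is in place the conclusion follows formally.
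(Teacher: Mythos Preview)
Your proposal is correct and follows essentially the same route as the paper: induce $L$ from $G_{K'}$ to $G_K$, compose with the counit to obtain a crystalline surjection onto $T_n$, and verify that the resulting Kisin model agrees with $\m$ after base change to $\gs'$ using faithful flatness. Your write-up is in fact more explicit than the paper's, which compresses the Mackey decomposition and the identification $\widetilde f_\gamma(\mathfrak L_\gamma)=\gamma(\m')$ into the single phrase ``$h|_{G_{K'}}$ clearly factors through $L \onto T_n|_{G_{K'}}$, which realizes $\M'$''; the extra bookkeeping you flag as the ``main technical obstacle'' is precisely what the paper leaves to the reader.
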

\begin{proof}
Since $f: L \onto T_n|_{G_{K'}}$, so we have $\Ind_{G_{K'}}^{G_K} L  \onto \Ind_{G_{K'}}^{G_K} \Res_{G_{K'}}^{G_K} T_n \onto T_n$, where the last surjection is via Lemma \ref{lemma ind res}(2). It suffices to show that the composite $h: \Ind_{G_{K'}}^{G_K} L \onto T_n$ realizes $\M$. Suppose $h$ realizes $\M^{(2)}$, then we have $h|_{G_{K'}}$ realizes $W(k')\otimes_{W(k)}\M^{(2)}$. By Lemma \ref{lemma ind res}(3), $h|_{G_{K'}}$ clearly factors through $L \onto  T_n|_{G_{K'}}$, which realizes $\M'$, and so $\M'=W(k')\otimes_{W(k)}\M^{(2)}$. Since $W(k')$ is faithfully flat over $W(k)$, we must have $\M=\M^{(2)}$.
\end{proof}

Our next lemma shows that the maximal object $\M_{(n)}$ is compatible with unramified base change.
Recall that we use $\M_{(n)}$ and $\M'_{(n)}$ to denote the maximal liftable Kisin models of $M_n$ and $M_n'$ respectively.

\begin{lemma}\label{lem-max-descent}
With notations in \ref{subsubsec notation descent}, we have
$\M'_{(n)} \simeq W(k') \otimes_{W(k)} \M_{(n)}$.
\end{lemma}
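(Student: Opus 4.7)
The plan is to prove the equality by establishing both inclusions, using maximality on each side. For the inclusion $W(k') \otimes_{W(k)} \M_{(n)} \subset \M'_{(n)}$, I would start with a loose crystalline lift $f : L \onto T_n$ realizing $\M_{(n)}$, with Kisin module $\mathfrak L$. Restricting to $G_{K'}$ gives a crystalline surjection $f|_{G_{K'}} : L|_{G_{K'}} \onto T_n|_{G_{K'}}$, whose associated Kisin module is $\gs' \otimes_{\gs} \mathfrak L$ by Proposition \ref{prop-etaledescent}. The induced map on Kisin modules has image $W(k') \otimes_{W(k)} \M_{(n)}$, exhibiting it as a liftable Kisin model of $M'_n$, and maximality of $\M'_{(n)}$ yields the inclusion.

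For the reverse inclusion, the strategy is to show that $\M'_{(n)}$ is $\Gamma$-stable, then descend to a liftable model of $M_n$. Fix $f' : L' \onto T_n|_{G_{K'}}$ realizing $\M'_{(n)}$ with Kisin module $\mathfrak L'$, and for $\gamma \in \Gamma$ pick a lift $\tilde\gamma \in G_K$. Define the twist $f'_\gamma : (L')^\gamma \to T_n|_{G_{K'}}$ by $v \mapsto \tilde\gamma \cdot f'(v)$; a direct check shows this is a $G_{K'}$-equivariant surjection, and $(L')^\gamma$ remains crystalline by Lemma \ref{lemma ind res}(5). On the Kisin module side, the Kisin module of $(L')^\gamma$ is $\mathfrak L'_\gamma = \gs' \otimes_{\gamma, \gs'} \mathfrak L'$ (the analogue of Lemma \ref{lemma twist Kisin module} for finite free crystalline lifts), and the induced map $\mathfrak L'_\gamma \to M'_n$ sends $a \otimes m$ to $a \tilde\gamma(f'(m))$, whose image is precisely $\gamma(\M'_{(n)}) \subset M'_n$ (with $\gamma$ acting semi-linearly on $M'_n$ via its extension from $G_K$). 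Thus $\gamma(\M'_{(n)})$ is liftable, so by maximality $\gamma(\M'_{(n)}) \subset \M'_{(n)}$; equality then follows since $\gamma$ is a bijection of finite order.

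With $\Gamma$-stability in hand, Corollary \ref{lem-descent} produces a Kisin model $\M \subset M_n$ with $W(k') \otimes_{W(k)} \M = \M'_{(n)}$, and Lemma \ref{new-lem-realization-descent} (applied to $f'$, whose image $\M'_{(n)}$ is now known to be $\Gamma$-stable) ensures $\M$ is itself liftable. Maximality of $\M_{(n)}$ then forces $\M \subset \M_{(n)}$, whence $\M'_{(n)} \subset W(k') \otimes_{W(k)} \M_{(n)}$. The main obstacle I expect is the twist identification in the second step: one has to carefully track that the covariant Kisin module functor of Theorem \ref{thm-old-covariant} sends $(L')^\gamma$ to $\mathfrak L'_\gamma$, and that the induced map on Kisin modules realizes the semi-linear translate $\gamma(\M'_{(n)})$ rather than some other submodule. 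Once these naturality statements are verified, everything else assembles from results already in place.
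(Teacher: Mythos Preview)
Your proposal is correct and follows essentially the same approach as the paper's proof: both establish the two inclusions via maximality, proving $\Gamma$-stability of $\M'_{(n)}$ by showing that each $\gamma$-translate is again liftable (via the twisted lift $(L')^\gamma$), then descending via Corollary~\ref{lem-descent} and invoking Lemma~\ref{new-lem-realization-descent}. Your treatment is in fact more explicit than the paper's, which simply gestures at Lemma~\ref{lemma twist Kisin module} for the twist identification that you have spelled out carefully.
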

\begin{proof}Since $\M_{(n)}$ can be realized by a loose crystalline lift $L \onto T_n$,   after restricting to $G_{K'}$, we see that $W(k') \otimes_{W(k)} \M_{(n)}  $ is liftable.
So $W(k') \otimes_{W(k)} \M_{(n)} \subset \M'_{(n)}$.
Conversely, we claim that $\M'_{(n)}$ is stable under $\Gamma$-action. If so, then by Lemma \ref{lem-descent}, $\M'_{(n)}$ descent to an $\M$ such that $\M$ is a Kisin model of $M_n$.
Furthermore, $\M$ is liftable by Lemma \ref{new-lem-realization-descent},  and so $\M \subset \M_{(n)}$, concluding the proof.

Now it suffices to prove the claim. Suppose $f: L \to T_n|_{G_{K' }}$ is surjection of $G_{K'}$-representations which realizes $ \M'_{(n)}$, then it is not hard (c.f. Lemma \ref{lemma twist Kisin module}) to see that $f ^{\gamma} : L ^{\gamma} \to T _n ^{\gamma}$ realizes $ \gamma (\M'_{(n)})$. That is to say, $\gamma (\M'_{(n)})$ is  liftable, and so $\gamma (\M'_{(n)}) \subset \M'_{(n)}$ by maximality of $\M'_{(n)}$ (and hence indeed, $\gamma (\M'_{(n)}) =  \M'_{(n)}$). In other words, $\M'_{(n)}$ is $\Gamma$-stable.
\end{proof}

\section{Torsion theory} \label{sec torsion thy}

In this section, we use the results on loose crystalline lifts to study Kisin models in the \'etale $\varphi$-modules corresponding to $p^n$-torsion representation of $G_K$. We use many facts on torsion Kisin modules heavily. The reader may consult \cite[\S 2.3]{liu2} for general facts on torsion Kisin modules.

\subsection{Generators of torsion Kisin modules}\label{subsect-generator}

\newcommand{\bolde}{\boldsymbol{e}}

\newcommand{\boldfe}{\boldsymbol{\mathfrak e}}

In this subsection, we will freely use notations from \ref{subsubsec notations pieces}. Recall that for $\m$ a $p^n$-torsion Kisin module such that $M: = \M[\frac 1 u] $ is a finite free $\gs_n [\frac 1 u]$-module, we have defined the modules $\m^{i, j}$ for $i<j$.
For each $1\le i \le n$, we can choose elements $\{\fe_j^{(i)} \in \m\}_{j=1}^d$, such that $\{p^{i-1}\fe_j^{(i)}\}_{j=1}^d$ forms a $\ku$-basis of $\m^{i-1, i}$.
The following easy lemma will be used later.

\begin{lemma}\label{lem-generators}
\begin{enumerate}
  \item For $m = 1, \dots , n$,  the module $\m[p ^{m}]$ is generated (over $\gs$) by $\{\fe_j^{(i)},  1\le i \le m\}_{j=1}^d$. Note that when $m=n$, $\m[p ^{m}]=\m$.
\end{enumerate}
\end{lemma}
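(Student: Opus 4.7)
The plan is to proceed by induction on $m$, using the defining exact sequences $0 \to \m^{i-1,i} \to p^{i-1}\m \xrightarrow{p} p^{i}\m \to 0$ together with the fact that $\m^{i-1,i}$ is by construction a $\ku$-module admitting the $\ku$-basis $\{p^{i-1}\fe_j^{(i)}\}_{j=1}^d$.

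For the base case $m=1$, note that $\m[p]$ coincides with $\m^{0,1}$ by the formula $\m^{i,i+1} = p^i\m \cap \ker(p)$ applied with $i=0$. Since $\m^{0,1}$ is killed by $p$, it is a module over $\ku = \gs/p\gs$, and by the very choice of $\{\fe_j^{(1)}\}_{j=1}^d$ as elements whose images $\{p^0\fe_j^{(1)}\}_{j=1}^d$ form a $\ku$-basis of $\m^{0,1}$, the $\gs$-span of $\{\fe_j^{(1)}\}_{j=1}^d$ is all of $\m[p]$.

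For the inductive step, assume $\m[p^{m-1}]$ is $\gs$-generated by $\{\fe_j^{(i)} : 1 \le i \le m-1\}_{j=1}^d$ and let $x \in \m[p^m]$. Then $p^{m-1}x \in p^{m-1}\m \cap \ker(p) = \m^{m-1,m}$, so I can write $p^{m-1}x = \sum_j c_j\, p^{m-1}\fe_j^{(m)}$ for some $c_j \in \ku$. Lifting each $c_j$ to $\tilde c_j \in \gs$ (the choice of lift does not matter because $p^{m-1}\fe_j^{(m)}$ is $p$-torsion), the element $y := x - \sum_j \tilde c_j \fe_j^{(m)}$ satisfies $p^{m-1}y = 0$, so $y \in \m[p^{m-1}]$. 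Applying the induction hypothesis to $y$ then expresses $x$ as a $\gs$-linear combination of $\{\fe_j^{(i)} : 1 \le i \le m\}_{j=1}^d$, completing the induction. The case $m=n$ recovers the full module $\m$ since $\m$ is $p^n$-torsion.

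There is no real obstacle here: the argument is a straightforward d\'evissage along the $p$-adic filtration of $\m$, and the only subtle point to keep track of is that a priori the $\fe_j^{(i)}$ are only chosen so that their $p^{i-1}$-multiples form a $\ku$-basis of $\m^{i-1,i}$, but this is precisely what powers the inductive step.
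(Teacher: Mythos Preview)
Your proof is correct and follows essentially the same d\'evissage argument as the paper: induct on $m$, use $\m[p]=\m^{0,1}$ for the base case, and for the step write $p^{m-1}x$ in terms of the basis of $\m^{m-1,m}$ to reduce to $\m[p^{m-1}]$. Your version is slightly more explicit about lifting the $\ku$-coefficients to $\gs$, but otherwise the arguments coincide.
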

\begin{proof} We prove by induction on $m$. The case $m = 1$ is trivial since $\m[p]$ is precisely $\m^{0, 1}$.
Suppose the statement is valid for $m-1 $, and consider $\m[p ^{m}]$.
For any $x \in \m[p ^{m}]$, we have $p^{m -1} x  \in \M ^{m -1, m}$. Hence   there exists $y_j  \in \gs$ so that  $ p ^{m -1}  (x - \sum_j y_j \fe^{(m)}_j ) = 0  $, and so $x - \sum_j y_j \fe^{(m)}_j \in \m[p ^{m -1}]$. By induction hypothesis, $x - \sum_j y_j \fe^{(m)}_j$ can be written as a linear combination of $\fe^{(i)}_{j}$ for $i \leq  m -1$; this completes the induction.
\end{proof}

The following lemma is the technical key of this subsection. Its assumption (``existence of $h$") will be verified for certain Kisin modules in Proposition \ref{prop-key}.

\begin{lemma}\label{lem-devissage} Using notations from above.
Suppose that there exists an $h \in \mathbb Z^{>0}$ such that $u ^h \M ^{i-1, i} \subset \M ^{i, i+1}$ for $i = 1, \dots, n-1$. Then the following holds.
\begin{enumerate}
\item For each $i$, we have $$(  \e^{(i)}_1, \dots \e^{(i)}_{d}) = (\e^{(n )}_1 , \dots , \e^{(n )}_d) (\frac {p}{u ^h})^{n-i}Y_{i, n}$$
     with $Y_{i, n} \in \Md (\gs[\frac{p}{u^{2h}}]) $.

\item For each $i= 1 ,\dots,  n -1$, we have  $$ p (\e^{(i+1)}_1 , \dots , \e_d ^{(i+1)}) = (\e ^{(i)}_1 , \dots , \e^{(i )}_d) \Lambda_{i  } \left( I_d +  \frac{p}{u ^{2h}} Y'_{i+1, i} \right) , $$
where $\Lambda_{i}\in \Md(\gs) $ such that $u ^h \Lambda _i ^{-1} \in \Md (\gs)$, and $Y'_{i+1, i} \in \Md (\gs[\frac{p}{u^{2h}}])$ .
\end{enumerate}

\end{lemma}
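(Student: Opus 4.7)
The plan is to translate the hypothesis into matrix identities between the chosen $\ku$-bases of $\M^{i-1, i}$ and $\M^{i, i+1}$, extract a multiplicative identity of the shape $CD = DC = u^h I_d$, and then iterate.

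First, I would apply the hypothesis to the basis $\{p^{i-1}\e^{(i)}_j\}_j$ of $\M^{i-1, i}$ to obtain a matrix $C^{(i)} \in \Md(\ku)$ with $u^h p^{i-1}(\e^{(i)}_1, \ldots, \e^{(i)}_d) = p^i (\e^{(i+1)}_1, \ldots, \e^{(i+1)}_d) C^{(i)}$ in $\M$; dually, the tautological inclusion $\M^{i, i+1} \subset \M^{i-1, i}$ (both live in $\M[p]$ and $p^i\M \subset p^{i-1}\M$) yields $D^{(i)}\in \Md(\ku)$ with $p^i(\e^{(i+1)}_1, \ldots, \e^{(i+1)}_d) = p^{i-1}(\e^{(i)}_1, \ldots, \e^{(i)}_d) D^{(i)}$. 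Setting $M := \M[\frac{1}{u}]$, both $\{p^{i-1}\e^{(i)}_j\}$ and $\{p^{n-1}\e^{(n)}_j\}$ become $\gs_1[\frac{1}{u}]$-bases of the common module $\M^{i-1, i}[\frac{1}{u}] = p^{n-1}M$; comparing change-of-basis matrices gives the central identity $C^{(i)} D^{(i)} = D^{(i)} C^{(i)} = u^h I_d$ in $\Md(\ku)$, since both products represent multiplication by $u^h$ on $p^{n-1}M$.

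For (2), I would take $\Lambda_i$ to be a suitable lift $\tilde D^{(i)}$ of $D^{(i)}$ to $\Md(\gs)$. The condition $u^h \Lambda_i^{-1}\in \Md(\gs)$ follows from lifting the multiplicative identity: starting from $\tilde D^{(i)}\tilde C^{(i)} \equiv u^h I_d \pmod p$, one iteratively corrects $\tilde C^{(i)}$ by $p$-multiples to produce $N\in \Md(\gs)$ with $\Lambda_i N \equiv u^h I_d \pmod{p^n}$, the correction terminating because $p^n$ kills $\M$. The relation $p(\e^{(i+1)}_1, \ldots, \e^{(i+1)}_d) = (\e^{(i)}_1, \ldots, \e^{(i)}_d)\tilde D^{(i)} + \boldsymbol{w}^{(i)}$ with $\boldsymbol{w}^{(i)}\in \M[p^{i-1}]^d$ then reduces the problem to rewriting the error $\boldsymbol{w}^{(i)}$ as $(\e^{(i)}_1, \ldots, \e^{(i)}_d)\tilde D^{(i)}\cdot \frac{p}{u^{2h}} Y'_{i+1, i}$ with $Y'_{i+1, i}\in \Md(\gs[\frac{p}{u^{2h}}])$. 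I would combine this with the companion relation $u^h (\e^{(i)}_1, \ldots, \e^{(i)}_d) = p (\e^{(i+1)}_1, \ldots, \e^{(i+1)}_d)\tilde C^{(i)} + \boldsymbol{z}^{(i)}$ (extracted from the hypothesis, with $\boldsymbol{z}^{(i)}\in \M[p^{i-1}]^d$), and use $\tilde C^{(i)}\tilde D^{(i)} = u^h I_d + pF^{(i)}$ to trade factors of $u^h$ for factors of $p$; the ``doubling'' in $u^{2h}$ records one $u^h$ absorbed by inverting $\Lambda_i$ and a second absorbed by inverting $\tilde C^{(i)}$.

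Part (1) then follows from (2) by downward induction on $i$, with the trivial base $i = n$ (take $Y_{n, n} = I_d$). Given the statement at level $i+1$, inverting the equation in (2) gives
\[ (\e^{(i)}_1, \ldots, \e^{(i)}_d) = p (\e^{(i+1)}_1, \ldots, \e^{(i+1)}_d)\bigl(I_d + \tfrac{p}{u^{2h}} Y'_{i+1, i}\bigr)^{-1} \Lambda_i^{-1}. \]
The geometric series $\bigl(I_d + \tfrac{p}{u^{2h}} Y'_{i+1, i}\bigr)^{-1}$ terminates modulo $p^n$ inside $\Md(\gs[\frac{p}{u^{2h}}])$, while $\Lambda_i^{-1} = u^{-h}\cdot (u^h \Lambda_i^{-1})$ contributes a single factor of $u^{-h}$. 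Assembling this $u^{-h}$ with the leading $p$ and the inductive prefactor $(p/u^h)^{n-i-1}$ produces the desired $(p/u^h)^{n-i}$, while the residual matrix $Y_{i, n}$ lies in $\Md(\gs[\frac{p}{u^{2h}}])$ as a product of three factors each in that ring.

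The main obstacle will be the precise $u$-denominator control in (2): the identity $CD = u^h I_d$ is exactly what confines the pole of the error term to $u^{-2h}$ rather than something worse, and arranging the algebra to expose this bound will require careful manipulation of the two companion relations simultaneously.
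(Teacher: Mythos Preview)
Your overall architecture is close to the paper's, but there are two genuine gaps.

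\textbf{The lift of $\Lambda_i$.} Your iterative correction ``starting from $\tilde D^{(i)}\tilde C^{(i)}\equiv u^h I_d \pmod p$, correct $\tilde C^{(i)}$ by $p$-multiples to get $N\in\Md(\gs)$ with $\Lambda_i N\equiv u^h I_d\pmod{p^n}$'' does not stay inside $\Md(\gs)$. At each step you must solve $\tilde D^{(i)}\cdot(?)\equiv -E\pmod p$ in $\Md(\ku)$; since $(D^{(i)})^{-1}=u^{-h}C^{(i)}$, the solution is $?\equiv -u^{-h}C^{(i)}\bar E$, which generically lies in $u^{-h}\ku$, not $\ku$. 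Iterating $n$ times piles up $u^{-nh}$. Even if this worked it would only yield $u^h\Lambda_i^{-1}\in\Md(\gs_n)$, whereas the statement asks for $\Md(\gs)$. The paper avoids this entirely: since $\ku$ is a PID, write $\bar\Lambda_i=XAY$ in Smith normal form with $X,Y\in\GL_d(\ku)$ and $A=\mathrm{diag}(u^{a_1},\dots,u^{a_d})$, $a_t\le h$; lift $X,Y$ to invertible $\tilde X,\tilde Y\in\Md(\gs)$ and set $\Lambda_i:=\tilde X A\tilde Y$. Then $u^h\Lambda_i^{-1}=\tilde Y^{-1}\,\mathrm{diag}(u^{h-a_t})\,\tilde X^{-1}\in\Md(\gs)$ on the nose.

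\textbf{The order of (1) and (2).} You propose to prove (2) first, then deduce (1) by downward induction. But your sketch for (2) never explains how to control the error $\boldsymbol{w}^{(i)}\in\M[p^{i-1}]^d$. By the generator lemma, $\boldsymbol{w}^{(i)}=\sum_{j\le i-1}(\e^{(j)}_1,\dots,\e^{(j)}_d)X_j$ with $X_j\in\Md(\gs)$; to rewrite this as $(\e^{(i)}_1,\dots,\e^{(i)}_d)\Lambda_i\cdot\frac{p}{u^{2h}}(\cdots)$ you need to express each $(\e^{(j)})$ in terms of $(\e^{(i)})$ with the advertised $(p/u^h)$-denominator bound --- which is exactly statement (1). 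The ``companion relation'' $u^h(\e^{(i)})=p(\e^{(i+1)})\tilde C^{(i)}+\boldsymbol z^{(i)}$ does not break this circle: it only links consecutive levels and carries its own error $\boldsymbol z^{(i)}\in\M[p^{i-1}]^d$, so iterating it reproduces the same problem. The paper resolves this by running an \emph{upward} induction on the top index, proving the relation $(\e^{(i-k)})=(\e^{(i)})(p/u^h)^k Y_{i-k,i}$ for all $k<i$ simultaneously; the identity in (2) then drops out of the inductive step (it is precisely the equation the paper labels \eqref{Eq-transit}). Your downward induction deriving (1) from (2) is fine once (2) is in hand, but you cannot get (2) without already having (1) for lower indices.
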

\begin{proof}
Since $u ^h \M ^{i-1, i} \subset \M ^{i, i+1} \subset \M ^{i-1, i}$, we obviously have
$$ (p ^{i  }\e^{(i+1)} _1 , \dots , p^{i  } \e^{(i+1 )}_d )= (p ^{i-1  }\e^{(i )} _1 , \dots , p ^{i-1}\e^{(i  )}_d ) \bar\Lambda _i, \ i = 1 , \dots , n-1, $$ where $\bar\Lambda_i \in \Md(\gs_1)$ is a matrix such that $u ^h \bar\Lambda _i ^{-1} \in \Md (\gs_1)$.

Now for each $i$, we construct a lift $\Lambda_i$ of $\bar \Lambda_i$ as follows.
Since $\ku$ is PID, we can write $\bar \Lambda_i = XAY$ where $X, Y$ are invertible matrices, and $A$ is diagonal matrix with elements on the diagonal of the form $u ^{a_t}$ such that $a_t \leq h$ for $1 \le t \le d$. Let $\tilde X, \tilde Y \in \Md(\gs)$ be some fixed lifts of $X$, $Y$ respectively. Then $\Lambda_i:= \tilde X A \tilde Y  \in \Md(\gs)$ is a lift of $\Lambda_i$ and satisfies $u ^h  \Lambda_i ^{-1} \in \Md (\gs)$. And so we have
$$ (p ^{i  }\e^{(i+1)} _1 , \dots , p^{i  } \e^{(i+1 )}_d )= (p ^{i-1  }\e^{(i )} _1 , \dots , p ^{i-1}\e^{(i  )}_d )  \Lambda _i, \ i = 1 , \dots , n-1, $$
since $p^i \e^{(i)}_j=0, 1\le j \le d$.

Now we prove by induction on $i$ ($\ge 2$) that
\begin{eqnarray} \label{induction eqn}
  (\e^{(i -k)}_1, \dots , \e^{(i -k)}_d ) = ( \e^{(i)}_1, \dots , \e^{(i)}_d) (\frac {p}{u ^h})^k  Y_{i-k, i}
\end{eqnarray}
with $Y_{i-k, i} \in \Md (\gs[\frac{p}{u^{2h}}]) $ for $1 \le k \le i-1$.
When $i =2$, we have  $(p \e^{(2)}_1 , \dots , p \e^{(2)}_d) = (\e^{(1)}_1, \dots , \e^{(1 )}_d) \Lambda_{1}$. The statement is valid as $Y_{1, 2}:=u ^h \Lambda_1 ^{-1}  \in \Md (\gs)$.
Now suppose the statement is valid for $i \le m -1$ ($m \ge 3$), we consider the situation of $i = m$. Since
$$(p^{m-1}  \e^{(m )}_1 , \dots , p^{m-1}  \e^{(m  )}_d) = (p ^{m -2 }\e^{(m -1 )}_1, \dots , p ^{m -2}\e^{(m -1  )}_d) \Lambda_{m -1}, $$
we have $p (\e_j^{(m)} )- (\e^{(m-1 )}_j ) \Lambda _{m -1} \in \M[p ^{m -2}] .$
By Lemma \ref{lem-generators}, $\M[p ^{m -2}]$ is generated by $ \e^{(i)}_j $ for $1 \leq i \leq  m - 2  $ and $j = 1, \dots , d$.
So, we have
\begin{eqnarray*}
p (\e_1^{(m)}, \dots , \e_d ^{(m)} ) -   (\e_1^ {(m-1 )}, \dots, \e_d ^{(m -1)} ) \Lambda _{m -1} =   \sum_{j =0} ^{m-2 } (\e_1 ^{(j)}, \dots \e_d ^{(j)}) X_j, \text{ for some } X_j \in \Md(\gs) \\
  =  \sum_{j =1} ^{m-2 } (\e_1 ^{(m -1) }, \dots , \e^{(m -1) }_d) (\frac {p}{u ^h })^{m-1-j }  Y_{j, m-1} X_j, \text{ by induction hypothesis.}
\end{eqnarray*}
Hence we have
\begin{eqnarray*}
 (\e_1^{(m)}, \dots , \e_d ^{(m)} )p\Lambda_{m -1} ^{-1} =    (\e_1^ {(m-1 )}, \dots, \e_d ^{(m -1)} )   \left( I_d +
    \sum_{j =1 } ^{m -2  }  (\frac {p}{u ^h })^{ m -1-j }    Y_{j, m-1} X_j \Lambda^{-1}_{m -1} \right).
\end{eqnarray*}
Since $u ^h \Lambda ^{-1}_{m -1}\in \Md (\gs)$, we can write (using $\frac{p}{u^h}=\frac{p}{u^{2h}} u^h$)
\begin{equation}\label{Eq-transit}
   (\e_1^{(m)}, \dots , \e_d ^{(m)} )p\Lambda_{m -1} ^{-1} =    (\e_1^ {(m-1 )}, \dots, \e_d ^{(m -1)} )
   \left( I _d + \frac{p}{u ^{2h}}  Y'_{m, m-1} \right),
\end{equation}
with $ Y'_{m, m-1} \in \Md(\gs[\frac{p}{u^{2h}}])$.
Hence
\begin{eqnarray*}
 (\e_1^ {(m-1 )}, \dots, \e_d ^{(m -1)} )&=&  (\e_1^{(m)}, \dots , \e_d ^{(m)} ) p \Lambda_{m -1} ^{-1}
  \left ( I _d + \frac{p}{u ^{2h}}  Y'_{m, m-1} \right)^{-1}\\
   &= & (\e_1^{(m)}, \dots , \e_d ^{(m)} ) \frac{p}{u ^h }
   Y_{m-1, m}
\end{eqnarray*}
with $Y_{m-1, m} \in \Md(\gs[\frac{p}{u^{2h}}])$ (Note that when calculating $(I _d + \frac{p}{u ^{2h}}  Y'_{m, m-1} )^{-1}$, we can throw away terms with high $p$-powers, because $p^m$ kills $(\e_1^{(m)}, \dots , \e_d ^{(m)} )$).
Now for $k =2, \dots , m-1$, we iterate the above  to get
\begin{eqnarray*}
 (\e_1 ^{(m -k)}, \dots \e_d ^{(m - k)})
  =   (\e_1 ^{(m - 1) }, \dots , \e^{(m -1) }_d) (\frac {p}{u ^h })^{k-1 }  Y_{m-k, m-1},
\end{eqnarray*}
and so \eqref{induction eqn} is proved.

It is clear \eqref{induction eqn} implies Item (1). And Item (2) is already proved in Equation \eqref{Eq-transit}.
\end{proof}

\subsection{Existence of $h$} \label{subsec exist h}
The following proposition will play a key role to prove the later overconvergence result. As we mentioned earlier, in this subsection, we will use the notation system in Convention \ref{conv rep mod}.

\begin{prop}\label{prop-key}
Use notations in \ref{subsubsec notations free T}.
There exists a constant $h$ only depending on $p$, $f$, $e$ and $d$ such that $u ^h \M_{(n-1, 1)} \subset \M_{(n ,1)}$ for all $n\geq 1$. Consequently $u ^h \M_{(n)} ^{i -1, i} \subset  \M_{(n)} ^{i , i +1}$ for all $i= 1, \dots , n -1$ by Corollary \ref{cor identify}.
\end{prop}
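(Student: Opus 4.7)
The plan is to leverage the linearly growing loose crystalline lifts of Theorem \ref{thm p^n tor loose crys lift} (with growth constant $s := p^{fd}+p-2$ from Remark \ref{rem linear growth}) and isolate the ``incremental'' contribution in passing from level $n-1$ to level $n$. The final constant will be produced as $h := es$ (possibly plus a small additive term), depending only on $p, f, e, d$ as required.

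First I would fix a loose crystalline lift $f_{n-1}: \widetilde L_{n-1} \onto T_{n-1}$ realizing $\m_{(n-1)}$. The inductive construction in the proof of Theorem \ref{thm p^n tor loose crys lift} then produces, after passing to a finite unramified extension $K'/K$, a $G_{K'}$-crystalline representation $\widetilde L_n'$ fitting in the short exact sequence
$$0 \longrightarrow p\widetilde L_{n-1}|_{G_{K'}} \longrightarrow \widetilde L_n' \longrightarrow \widetilde Z_n' \longrightarrow 0$$
together with a surjection $f_n': \widetilde L_n' \onto T_n|_{G_{K'}}$, where $\widetilde Z_n'$ itself sits in an extension $0 \to \widetilde N_n' \to \widetilde Z_n' \to \widetilde L_{n-1}(-s)|_{G_{K'}} \to 0$ as in diagram \eqref{Diag-2 first apperance}. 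The structural point is that $p\widetilde L_{n-1}|_{G_{K'}}$ lies in the kernel of $f_n'$, so $f_n'$ factors as $\widetilde Z_n' \onto T_n|_{G_{K'}}$; a further diagram chase shows that the composite $\widetilde Z_n' \to T_n|_{G_{K'}} \to T_{n-1}|_{G_{K'}}$ kills $\widetilde N_n'$ (whose image lies entirely in $p^{n-1}T_n$) and thus factors through $\widetilde L_{n-1}(-s)|_{G_{K'}}$, agreeing (up to the $\varepsilon_p^{-s}$-twist) with the restriction of $f_{n-1}$.

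Next I would pass to Kisin modules over $\gs' := W(k')[\![u]\!]$ using exactness of the Kisin module functor on lattices in crystalline representations. Writing $\mathfrak L_n', \mathfrak Z_n', \mathfrak L_{n-1}'$ for the respective Kisin modules (all finite free over $\gs'$), I obtain a short exact sequence
$$0 \longrightarrow p\mathfrak L_{n-1}' \longrightarrow \mathfrak L_n' \longrightarrow \mathfrak Z_n' \longrightarrow 0,$$
together with a quotient $\mathfrak Z_n' \onto \mathfrak L_{n-1}'(-s)$. The twist $(-s)$ only affects the $\varphi$-structure, multiplying it by $(c_0^{-1}E(u))^s$; modulo $p$, since $E(u)\equiv u^e$ and $c_0$ is a unit in $W$, this is a unit multiple of $u^{es}$. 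Tracing the realization maps $f_n'$ and $f_{n-1}$ through the diagram of Kisin modules and their mod-$p$ reductions, this $u^{es}$-twist-factor translates precisely into the inclusion
$$u^{es}\, (W(k')\otimes_{W(k)} \m_{(n-1,1)}) \subset W(k')\otimes_{W(k)} \m_{(n,1)} \quad \text{inside } W(k')\otimes_{W(k)} M_1,$$
where I have already used Lemma \ref{lem-max-descent} to identify $\m_{(j,1)}^{K'} = W(k')\otimes_{W(k)} \m_{(j,1)}$ for $j \in \{n-1, n\}$. This inclusion then descends faithfully to $u^{es}\, \m_{(n-1,1)} \subset \m_{(n,1)}$, and one sets $h := es$.

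The main obstacle is the third step, namely the careful tracking of how the $\varepsilon_p^{-s}$-twist on the quotient $\mathfrak L_{n-1}'(-s)$ of $\mathfrak Z_n'$ interacts with both $f_n'$ and the realization of $\m_{(n-1,1)}$ via $f_{n-1}$, in order to conclude that the $u^{es}$-factor appears in the right place. A subsidiary concern is that $\widetilde Z_n' \to T_n|_{G_{K'}}$ need not be injective, and one must verify that its kernel -- together with the piece contributed by $\widetilde N_n'$ -- is absorbed into the $\m_{(n,1)}^{K'}$ side without weakening the bound; this should follow by restricting attention to the $\m_{(n-1,1)}$-direction, which is exactly the piece sitting in $\mathfrak L_{n-1}'(-s)$ after quotienting out $\widetilde N_n'$ and $p\mathfrak L_{n-1}'$.
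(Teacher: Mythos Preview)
Your overall strategy --- pass to the unramified extension $K'$, trace the loose crystalline lift of Theorem~\ref{thm p^n tor loose crys lift} through diagrams~\eqref{Diag-2 first apperance} and~\eqref{Diag-1-NEW} to isolate the twist contribution, then descend via Lemma~\ref{lem-max-descent} --- is the one the paper follows. However, two of your steps invoke properties of Kisin modules that simply fail, and each corresponds to a nontrivial lemma the paper has to prove separately.

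First, you appeal to ``exactness of the Kisin module functor on lattices in crystalline representations'' to produce both the short exact sequence $0 \to p\mathfrak L'_{n-1} \to \mathfrak L'_n \to \mathfrak Z'_n \to 0$ and a surjection $\mathfrak Z'_n \onto \mathfrak L'_{n-1}(s)$. But this functor is only \emph{left} exact (\cite[Lem.~2.19]{liu-wd}, recalled just before Lemma~\ref{lem-surjection}): given a short exact sequence of lattices $0 \to L^{(1)} \to L^{(2)} \to L^{(3)} \to 0$, the Kisin-module map $\mathfrak L^{(2)} \to \mathfrak L^{(3)}$ need not be onto. The surjectivity of $\mathfrak L'_n \to \mathfrak Z'_n$ is the entire content of Lemma~\ref{lem-surjection}, proved by constructing an explicit Kisin model $\breve{\mathfrak W}_n \subset \breve{\mathcal W}_n$ by hand and using uniqueness of finite free Kisin models. (Incidentally, your claim that $p\widetilde L_{n-1}$ lies in the kernel of $f'_n: \widetilde L'_n \to T_n$ is also false --- it only lies in the kernel of the composite to $T_1$ --- so $f'_n$ does not factor through $\widetilde Z'_n$; the paper's chain of equalities after~\eqref{easy 1} works because it passes to $T_1$ first.)

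Second, and this is the core obstruction, the map $\mathfrak Z'_n \to \mathfrak L'_{n-1}(s)$ is genuinely not surjective, so the image $\mathring{\mathfrak L}'_{n-1}$ of the composite into $\mathfrak L'_{n-1}/p\mathfrak L'_{n-1}$ has basis $u^{a_i}\bar e_i$ with $a_i = \frac{es}{p-1} + b_i$ and $b_i \geq 0$ a priori unbounded. (The twist itself contributes $\frac{es}{p-1}$, not $es$; cf.\ Example~\ref{ex-cyclotomic}.) Your argument amounts to assuming $b_i = 0$. Bounding the $b_i$ uniformly in $n$ is exactly Lemma~\ref{lemma bound h}: one compares the determinant-valuation invariant $\alpha(\cdot)$ of Lemma~\ref{lemma alpha} across the sequence~\eqref{seq-1} and controls the defect by the Hodge--Tate weights of $\widetilde N'_n$, which are bounded by Corollary~\ref{rem min HT wts}. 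Without this argument there is no reason $h$ should be independent of $n$, and your proposed $h = es$ does not suffice.
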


\begin{proof}
The proof is quite involved, so we break the proof into several steps. We first present the main strategy of the proof, assuming two difficult lemmas which will be proved later.

We first fix an $n$, so we can apply Theorem \ref{thm p^n tor loose crys lift} to our $T_n$, and we will freely use notations there.
By Lemma \ref{lem-max-descent}, for any $n$, if we let $M_n':=W(k')\otimes_{W(k)} M_n$, then the maximal liftable Kisin model of $M_n'$ is $\M'_{(n)} \simeq W(k') \otimes_{W(k)} \M_{(n)}$. Thus, it is easy to see that $\M'_{(n, 1)}=W(k')\otimes_{W(k)} \M_{(n, 1)} $ for any $n$. So to prove our proposition, it suffices to show that
$$u ^h \M'_{(n-1, 1)} \subset \M'_{(n ,1)}.$$

We divide the following argument into two steps. In Step 1, we will construct another Kisin model (denoted as $\mathring{\M}'_{n, 1}$) of $M_1'$ such that $\mathring{\M}'_{n, 1} \subset \M'_{(n ,1)}$. Then in Step 2, we show that $u ^h \M'_{(n-1, 1)} \subset \mathring{\M}'_{n, 1}.$

\textbf{Step 1.} The loose crystalline lift $\widetilde L'_n \onto \breve W_n|_{G_{K'}} \onto  T_n|_{G_{K'}}$ realizes a liftable Kisin model $\mathring{\M}'_n$ of $M'_n$, and so $\mathring{\M}'_n \subset \M_{(n)}'$.
The following composite of $G_{K'}$-representations
\begin{equation} \label{easy 1}
\xymatrix{
\widetilde L'_n\ar@{->>}[r] & \breve W_n|_{G_{K'}} \ar@{->>}[r] &T_n|_{G_{K'}} \ar@{->>}[r] & T_1|_{G_{K'}}
.}
\end{equation}
realizes a Kisin model $\mathring{\M}'_{n, 1}$ in $M_1'$, and we certainly have
$$\mathring{\M}'_{n, 1} \subset \M'_{(n, 1)}.$$

By diagram \eqref{Diag-0}, The composite \eqref{easy 1} is the same as
\begin{equation*}
\xymatrix{
\widetilde L'_n \ar@{->>}[r]  & \breve W_n \ar@{->>}[r] & \widetilde L_{n -1}  (\ar@{->>}[r] & T_{n-1}) \ar@{->>}[r] & T_1,
}
\end{equation*}
which is the same as
\begin{equation*}
\xymatrix{
\widetilde L'_n \ar@{->>}[r]  &  \breve W_n \ar@{->>}[r]  &  \breve W_n/p\breve W_n=Z_n \ar@{->>}[r]  & \widetilde L_{n -1}/p\widetilde L_{n -1}  \ar@{->>}[r]  & T_1,
}
\end{equation*}
which, by \eqref{Diag-1-NEW}, is the same as
\begin{equation*}
\xymatrix{
\widetilde L'_n \ar@{->>}[r]  & \widetilde Z_n'  \ar@{->>}[r]  & Z_n \ar@{->>}[r]  & \widetilde L_{n -1}/p\widetilde L_{n -1}  \ar@{->>}[r]  & T_1,
}
\end{equation*}
which, by diagram \eqref{Diag-2 first apperance}, is the same as
\begin{equation}\label{Eq-sequence-m'}
 \xymatrix{\widetilde L'_n\ar@{->>}[r] &  \widetilde Z_n' \ar@{->>}[r] & \widetilde L_{n -1}(-s)  \ar@{->>}[r] &\widetilde L_{n -1}/p\widetilde L_{n -1}  \ar@{->>}[r] & T_1.}
\end{equation}
And so \eqref{Eq-sequence-m'} also realizes $\mathring{\M}'_{n, 1}$.

\textbf{Step 2.} The last surjection of \eqref{Eq-sequence-m'} is induced by the following composite of $G_{K}$-representations.
\begin{equation}\label{Eq-sequence-mn-1}
\xymatrix{\widetilde L_{n-1}\ar@{->>}[r] & T_{n-1} \ar@{->>}[r] & T_1}
\end{equation}
Since we may assume that $\wt L_{n-1}\twoheadrightarrow T_{n-1}$ realize $\M_{(n -1)}$, and the composite \eqref{Eq-sequence-mn-1} restricted to $G_{K'}$ realizes $\M'_{(n-1, 1)}$.

So in order to prove $u ^h \M'_{(n-1, 1)} \subset \M'_{(n ,1)}$, it suffices to show $u ^h \M'_{(n-1, 1)} \subset \mathring{\M}'_{n, 1}$.
And so it suffices to show that the cokernel of the following composite (which are maps of Kisin modules corresponding to \eqref{Eq-sequence-m'}) is killed by $u^h$
\begin{equation}\label{Eq-uh}
\xymatrix{\widetilde{\L}'_{n} \ar[r] &  \widetilde{\mathfrak{Z}}_n' \ar[r] & \widetilde \L'_{n -1}(s)     \ar[r] & \L'_{n-1}/p\L'_{n-1}}.
\end{equation}
By Lemma \ref{lem-surjection} below, the map $\widetilde{\L}'_{n} \to  \widetilde{\mathfrak{Z}}_n'$ is in fact surjective, so we only need to consider the cokernel of the following composite of maps:
\begin{equation}\label{Eq-uhuh}
\xymatrix{\widetilde{\mathfrak{Z}}_n' \ar[r] & \widetilde \L'_{n -1}(s)     \ar[r] & \L'_{n-1}/p\L'_{n-1}}.
\end{equation}
Denote the image of the composite \eqref{Eq-uhuh} as $\mathring \L'_{n-1}$, which is contained in $\L'_{n-1}(s)/p\L'_{n-1}(s)$.
So we can choose basis $e_1, \ldots e_m$ of $\L'_{n-1}$, such that $\mathring \L'_{n-1}$ has a $k'[\![u ]\!]$-basis formed by $u^{a_1}\bar e_1, \ldots u^{a_m}\bar e_m$, where $a_i=\frac{es}{p-1}+b_i$ with $b_i\ge 0$ (note that the rank of $\mathring \L'_{n-1}$ is also $m$, because of the surjective maps in \eqref{Eq-sequence-m'}).

Finally, we will show in Lemma \ref{lemma bound h} that $a_i$'s are bounded by a constant $h$, and this will conclude the proof.
\end{proof}

Now we will prove Lemma \ref{lem-surjection} and Lemma \ref{lemma bound h} to complete the proof of Proposition \ref{prop-key}.
Before we prove Lemma \ref{lem-surjection}, let us first recall the following subtle fact: suppose that we have an exact sequence of lattices inside semi-stable representations:
$$0 \to L^{(1)} \to L^{(2)}\to L ^{(3)} \to 0$$
Then the corresponding sequence of Kisin modules
$$0\to \L^{(1)} \to \L^{(2)} \to \L^{(3)} \to 0 $$ is only \emph{left} exact \cite[Lem. 2.19.]{liu-wd}, \emph{i.e.}, the map $\L^{(2)} \to \L^{(3)}$ is not necessarily surjective.

\begin{lemma}\label{lem-surjection}
The sequence of finite free Kisin modules corresponding to the first row of Diagram\eqref{Diag-1-NEW}
$$ 0 \to p \widetilde \L'_{n-1} \longrightarrow \widetilde \L'_{n}  \overset{\theta}\longrightarrow \widetilde{\mathfrak{Z}}'_{n} \longrightarrow 0 $$
is exact, \emph{i.e.}, $\theta$ is surjective.
\end{lemma}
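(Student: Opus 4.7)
I would prove surjectivity of $\theta$ by showing that the image $\mathfrak{I} := \theta(\widetilde{\mathfrak{L}}'_n) \subseteq \widetilde{\mathfrak{Z}}'_n$ coincides with $\widetilde{\mathfrak{Z}}'_n$, using the equivalence of Theorem~\ref{thm-old-covariant} between finite free $(\varphi, \hat G)$-modules and lattices in semi-stable $G_K$-representations. The crucial input is the rational splitting: since $\widetilde L'_n = \widetilde Z'_n \times_{Z_n} \breve W_n$ with $Z_n$ killed by $p$, inverting $p$ yields $\widetilde L'_n[1/p] \simeq \widetilde Z'_n[1/p] \oplus \widetilde L_{n-1}|_{G_{K'}}[1/p]$ as crystalline $G_{K'}$-representations. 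Via functoriality of Kisin's correspondence, $\theta[1/p]$ is then surjective on rational Kisin modules, so $\mathfrak{I}$ and $\widetilde{\mathfrak{Z}}'_n$ have the same $\gs$-rank. Likewise, Fontaine's equivalence applied after inverting $u$ (to the exact sequence of $G_\infty$-representations) gives $T^*_\gs(\mathfrak{I}) \simeq \widetilde Z'_n|_{G_\infty}$.

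\textbf{Execution.} First, $\mathfrak{I}$ is $\gs$-finite free: as a $\gs$-submodule of the finite free Kisin module $\widetilde{\mathfrak{Z}}'_n$ it is torsion-free, and applying $\theta$ to the inclusion $E(u)^r \widetilde{\mathfrak{L}}'_n \subseteq (1 \otimes \varphi)(\varphi^* \widetilde{\mathfrak{L}}'_n)$ gives $E(u)^r \mathfrak{I} \subseteq (1 \otimes \varphi_{\mathfrak{I}})(\varphi^* \mathfrak{I})$, so $\mathfrak{I}$ inherits finite $E(u)$-height from $\widetilde{\mathfrak{L}}'_n$; the standard freeness result for torsion-free $\gs$-modules of finite $E(u)$-height (see \cite[\S 2.3]{liu2}) then applies. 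Next, I would equip $\mathfrak{I}$ with the $\hat G$-action inherited from $\widetilde{\mathfrak{Z}}'_n$: the base-changed map $\hat\theta := \hR \otimes_{\varphi, \gs} \theta$ is $\hat G$-equivariant, and because $\mathfrak{I}$ and $\widetilde{\mathfrak{Z}}'_n$ are finite free of the same $\gs$-rank (the change-of-basis matrix has nonzero determinant), the natural map $\hR \otimes_{\varphi, \gs} \mathfrak{I} \hookrightarrow \hR \otimes_{\varphi, \gs} \widetilde{\mathfrak{Z}}'_n$ is injective, allowing us to identify $\hat{\mathfrak{I}} := \hR \otimes_{\varphi, \gs} \mathfrak{I}$ with the image of $\hat\theta$ inside $\hat{\widetilde{\mathfrak{Z}}}'_n$. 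The axioms of a finite free $(\varphi, \hat G)$-module for $\mathfrak{I}$ then follow by inheritance: axiom~(4) from $1 \otimes \mathfrak{I} \subseteq 1 \otimes \widetilde{\mathfrak{Z}}'_n \subseteq (\hat{\widetilde{\mathfrak{Z}}}'_n)^{H_K}$, and axiom~(5) from the surjection $\widetilde{\mathfrak{L}}'_n/u\widetilde{\mathfrak{L}}'_n \twoheadrightarrow \mathfrak{I}/u\mathfrak{I}$ together with the triviality of the $\hat G$-action on $\widetilde{\mathfrak{L}}'_n/u\widetilde{\mathfrak{L}}'_n$. The inclusion $\iota : \mathfrak{I} \hookrightarrow \widetilde{\mathfrak{Z}}'_n$ is then a morphism of $(\varphi, \hat G)$-modules; applying $\hat T^*$, the composite $\widetilde L'_n \twoheadrightarrow \hat T^*(\mathfrak{I}) \hookrightarrow \widetilde Z'_n$ equals $\hat T^*(\theta)$, the original surjection, which forces $\hat T^*(\iota)$ to be an isomorphism. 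Since Theorem~\ref{thm-old-covariant} is an equivalence of categories, it reflects isomorphisms; hence $\iota$ is an isomorphism and $\mathfrak{I} = \widetilde{\mathfrak{Z}}'_n$.

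\textbf{Main obstacle.} The principal technical hurdle is promoting $\mathfrak{I}$ to a $(\varphi, \hat G)$-module---specifically, identifying $\hR \otimes_{\varphi, \gs} \mathfrak{I}$ with a $\hat G$-invariant submodule of $\hat{\widetilde{\mathfrak{Z}}}'_n$ and then verifying axioms~(4) and~(5). Once this is handled, the combination of rational splitting (giving equal $\gs$-ranks), Fontaine's equivalence after inverting $u$ (giving matching $G_\infty$-representations), and reflection of isomorphisms by the equivalence of Theorem~\ref{thm-old-covariant} yields surjectivity with essentially no further work.
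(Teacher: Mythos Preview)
Your argument is correct and follows a genuinely different path from the paper's. The paper proceeds constructively: it first writes down an explicit $\ku$-basis $\{\alpha_j, u^{a_i}\bar e_i\}$ for the torsion Kisin module $\gZ_n$, lifts the $\bar e_i$ to elements $\hat e_i \in \breve{\cW}_n$, and checks by hand that the $\gs$-module $\breve{\W}_n$ generated by $\{p\hat e_i, u^{a_i}\hat e_i, \alpha_j\}$ is $\varphi$-stable. It then forms the fiber product $\widetilde{\L}^{(2)}_n := \breve{\W}_n \times_{\gZ_n} \widetilde{\gZ}_n$, which by construction sits in a short exact sequence between finite free Kisin modules and hence is itself finite free; uniqueness of the finite free Kisin model inside $\widetilde{\cL}_n$ then forces $\widetilde{\L}^{(2)}_n = \widetilde{\L}_n$. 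Your approach, by contrast, works directly with the image $\mathfrak{I} = \theta(\widetilde{\L}'_n)$: you argue it is $p$-torsion-free and inherits finite $E(u)$-height (using flatness of $\varphi:\gs\to\gs$ to identify $\varphi^*\mathfrak I$ with the image of $\varphi^*\theta$), hence is finite $\gs$-free by the standard structure theorem, and then promote it to a $(\varphi,\hat G)$-module in order to invoke the equivalence of Theorem~\ref{thm-old-covariant}. Your route is more categorical and bypasses the explicit generator-chasing; the paper's route, though longer, reuses the basis $u^{a_i}\bar e_i$ already set up in the surrounding proof of Proposition~\ref{prop-key}. One simplification worth noting: the $(\varphi,\hat G)$-module detour is not strictly necessary. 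Once you know $\mathfrak{I}$ is finite $\gs$-free with $\mathfrak{I}[1/u] = \widetilde{\cZ}'_n = \widetilde{\gZ}'_n[1/u]$, the very uniqueness principle the paper itself invokes (``the finite free Kisin model of $\mathcal{L}_n$ is unique''), equivalently Kisin's full faithfulness of $T^*_\gs$ on finite free Kisin modules, already yields $\mathfrak{I} = \widetilde{\gZ}'_n$ without verifying axioms (4) and (5).
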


\begin{convention}
In the proof of this lemma, \emph{all} the representations that we consider are $G_{K'}$-representations, and all Kisin modules are over $\gs'=W(k')\otimes_{W(k)} \gs$. To be completely rigorous, we will need to restrict many representations from $G_K$ to $G_{K'}$, and use prime notation over Kisin modules (\emph{i.e.}, notations like $\M'$). For notational simplicity, from now on (\emph{i.e.}, in the proof of Lemma \ref{lem-surjection} and Lemma \ref{lemma bound h}), we will drop these prime notations.
\end{convention}

\begin{proof}
\textbf{Step 0:} \emph{strategy.} We first sketch the strategy of the proof as follows.
We will not directly show that $\theta$ is surjective. Instead, we will construct a Kisin model $\breve \W_{n}$ inside $\breve \cW_{n}$, which sits in a short exact sequence
\begin{equation}\label{Eq-exactsq-gW-n}
0 \to p \widetilde \L_{n-1} \to\breve \W_{n} \to \gZ_{n} \to 0,
\end{equation}
corresponding to the exact sequence \eqref{Eq-exactsq-W-n} (restricted to $G_{K'}$),  where $\gZ_n$ is the Kisin model realized by $\wt Z_n\twoheadrightarrow Z_n$.

We then define the product $\widetilde \L^{(2)}_{n}:=  \breve \W_{n} \times_{\mathfrak Z_n} \widetilde{\mathfrak Z}_n$, which is a Kisin model of $\widetilde{\mathcal L}_n$, and naturally sits inside the short exact sequence:
$$ 0 \to p \widetilde \L_{n-1} \longrightarrow \widetilde \L^{(2)}_{n}  \overset{\theta}\longrightarrow \widetilde{\mathfrak{Z}}_{n} \longrightarrow 0. $$
This forces $\widetilde \L^{(2)}_{n}$ to be finite free, and so it has to be equal to $\widetilde \L_{n}$ (because the finite free Kisin model of $\mathcal{L}_n$ is unique), concluding the proof.

\textbf{Step 1:} \emph{a basis for $\mathfrak Z_n$}. As the strategy suggests, we first construct a basis for $\gZ_{n}$.

From diagram \eqref{Diag-2 first apperance}, we have the following
\begin{equation}\label{Diag-2}
\begin{split}
\xymatrix{0 \ar[r] &  \widetilde N_{n} \ar@{->>}[d] \ar[r] &   \widetilde Z_{n}  \ar@{->>}[d]\ar[r] & \widetilde L_{n-1}(-s) \ar@{->>}[d]\ar[r]  & 0 \\
0 \ar[r] & p ^{n -1} T_{n} \ar[r] & Z_{n}  \ar[r] & \widetilde L_{n-1}/ p\widetilde L_{n-1} \ar[r] & 0 .}
\end{split}
\end{equation}
Note that in this diagram, only the map $\widetilde L_{n-1}(-s) \onto \widetilde L_{n-1}/ p\widetilde L_{n-1}$ is a \emph{strict} crystalline $\Zp$-lift (\emph{i.e.}, it is a $\bmod p$ map); the other two projections $\widetilde N_{n}\onto p ^{n -1} T_{n}$ and  $\widetilde Z_{n} \onto Z_{n}$ are just loose crystalline lifts.

Recall that as in Step 2 of the proof of Proposition \ref{prop-key}, $\mathring \L_{n-1}$ is the torsion Kisin module induced by the surjection $\widetilde Z_{n} \onto \widetilde L_{n-1}(-s) \onto \widetilde L_{n-1}/ p\widetilde L_{n-1}$.
Let $\gZ _{n}$ be the torsion Kisin module induced by the map $\widetilde Z_n \onto Z_{n}$.
Let $\N_{n}$ be the kernel of the surjection $\gZ_{n} \onto  \mathring \L_{n-1}$.
The following short exact sequence
\begin{equation}\label{seq-1}
0 \to  \N_{n} \to \gZ_{n} \to  \mathring \L_{n-1} \to 0.
\end{equation}
corresponds to the bottom row of \eqref{Diag-2}, and so $\N_{n}$ is a Kisin model of $p^{n-1}M_n$, and hence finite free over $\ku$.
We already have the basis $u ^{a_1} \bar e_1, \dots , u^{a_m} \bar e_m$ for $\mathring \L_{n-1} $.
Pick any $\ku$-basis $\alpha_1, \dots , \alpha_t$ for $\N _{n}$, and then $\{\alpha_1, \dots , \alpha_t, u ^{a_1} \bar e_1, \dots , u^{a_m} \bar e_m\}$ form a basis of $\gZ_{n}$.

\textbf{Step 2:} \emph{a Kisin model inside $\breve \cW_{n}$.}
Now we construct a Kisin model $\breve \W_{n}$ inside $\breve \cW_{n}$.
Consider $e_1 , \dots , e_m$ the $\gs$-basis of $\widetilde \L_{n-1 }$, since $\breve \cW_{n} \onto \widetilde \L_{n-1 }$, we can pick lifts $\hat e_i \in \breve \cW_{n}$ of $e_i$.
Consider the $\gs$-module $\breve \W_{n}$ generated by $p \hat e_i, u ^{a_i} \hat e_i $ and $\alpha _j $; we claim this is a Kisin model of $\breve \cW_{n}$.

We see that $\hat e, \alpha$ generate $\breve \cW_{n}$ as \'etale $\varphi$-modules, so it suffices to check that $\breve \W_{n}$ is $\varphi$-stable.
To see this, we first observe that
$$\varphi (\hat e_1, \dots, \hat e_m) = (\hat e_1, \dots, \hat e_m) A + (\alpha_1, \dots , \alpha_t) B, $$ where $A \in \textnormal{M}_{m\times m}(\gs), B \in \textnormal{M}_{t\times m}(\O_{\mathcal E})$. Now we check step by step.
\begin{itemize}
  \item Because $p\alpha_i=0$, we have $\varphi (p\hat e_1, \dots, p\hat e_m) = (p\hat e_1, \dots, p\hat e_m) A $ with $A \in \textnormal{M}_{m\times m}(\gs)$.

  \item $\varphi (u ^{a_1} \hat e_1 , \dots , u ^{a_m}\hat e_m  )= (\hat e_1, \dots \hat e_m ) A \varphi (\Lambda) + (\alpha_1, \dots , \alpha_t) B \varphi (\Lambda) $ where $\Lambda$ is a the diagonal matrix $[u ^{a_1}, \dots, u ^{a_m}]$. Since $\{\alpha_i,  u ^{a_j}\bar e_j\}$ forms a $\ku$-basis of $\gZ_{n}$, we must have
      $$(e_1, \dots , e_m  )A \varphi(\Lambda) \equiv (e_1,\dots , e_m )\Lambda C\mod p$$
      with $C$ having coefficients in $\gs$ and $B \varphi (\Lambda)$ have entries in $\ku$. Hence $\{\varphi (u ^{a_i} e_i)\} \subset \breve \W_{n} $.

  \item Finally it is obvious that $\varphi (\alpha_1, \dots , \alpha_t) \subset  \N_n$.
\end{itemize}
So we conclude that $\breve \W_{n}$ is indeed a Kisin model of $\cW_{(n)}$, and the sequence \eqref{Eq-exactsq-gW-n} is short exact; as we noted in Step 0, this concludes the proof.
\end{proof}

We now proceed to Lemma \ref{lemma bound h}. First, let us introduce a useful definition.
\begin{defn}
Let $\M$ be a $p$-torsion Kisin module (over $k'[\![u]\!]$). Then $v_R (\det (\varphi |_\M))$ makes sense and does not depend on the choice of $k'[\![u]\!]$-basis of $\M$ (here we normalize $v_R $ on $k'[\![u]\!]$ by setting $v_R(u )=1$). Denote $\alpha(\M) = v_R (\det (\varphi |_\M))$.
\end{defn}

\begin{lemma}\label{lemma alpha}
\begin{enumerate}
  \item If we have an exact sequence of $p$-torsion Kisin modules $0 \to \M^{(1)} \to \M^{(2)}\to \M^{(3)} \to 0$, then $\alpha(\M^{(2)}) = \alpha(\M^{(1)}) + \alpha(\M^{(3)})$.

  \item  Let $L$ be a $G_{K'}$-stable $\Z_p$-lattice in a semi-stable representation $V$  with non-positive Hodge-Tate weights $\t{HT} (V)$, and $\L$ the corresponding Kisin module.
      Then $\alpha(\L/p\L) = e(\sum_{i \in \t{HT} (V)} -i)$ where $e=e(K')$ the ramification index.
\end{enumerate}
\end{lemma}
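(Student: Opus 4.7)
For part (1), the first observation is that a $p$-torsion Kisin module is finitely generated over $k'[\![u]\!]$ and $u$-torsion free (by the injectivity $\M\hookrightarrow \O_\E\otimes_{\gs'}\M$ in the definition of a Kisin module), hence free over the DVR $k'[\![u]\!]$. In particular $\M^{(3)}$ is free, so the short exact sequence in the hypothesis splits as a sequence of $k'[\![u]\!]$-modules (though not, in general, as a sequence of $\varphi$-modules). My plan is to choose a $k'[\![u]\!]$-basis $e_1,\dots,e_r$ of $\M^{(1)}$ and to lift a $k'[\![u]\!]$-basis of $\M^{(3)}$ to elements $f_1,\dots,f_s\in\M^{(2)}$. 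In the resulting basis $(e_1,\dots,e_r,f_1,\dots,f_s)$ of $\M^{(2)}$, the matrix of $\varphi|_{\M^{(2)}}$ is block upper-triangular,
$$A^{(2)}=\begin{pmatrix}A^{(1)} & C\\ 0 & A^{(3)}\end{pmatrix},$$
with $A^{(1)}$ the matrix of $\varphi|_{\M^{(1)}}$ in the basis $(e_i)$ and $A^{(3)}$ the matrix of the induced $\varphi|_{\M^{(3)}}$ in the basis given by the images of the $f_j$. Since $\det A^{(2)}=\det A^{(1)}\cdot\det A^{(3)}$, applying $v_R$ yields (1).

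For part (2), the plan is to compute the determinant of the linearization $1\otimes\varphi:\varphi^*\L\to\L$, which in any $\gs'$-basis of $\L$ has the same matrix as the semi-linear $\varphi|_\L$; therefore $\det(\varphi|_{\L/p\L})$ equals $\det(1\otimes\varphi)\bmod p$ up to a unit. The key input I will invoke is the structural theorem for Kisin modules attached to lattices in semi-stable representations (compare \cite{kisin2}, and see, e.g., \cite[\S 2]{liu2}): in our covariant convention, the elementary divisors of $1\otimes\varphi$ are precisely $E(u)^{-h_1},\dots,E(u)^{-h_d}$, so
$$\det(1\otimes\varphi)=c\cdot E(u)^{\sum_i(-h_i)},\qquad c\in(\gs')^\times.$$
Reducing mod $p$ and using that $E(u)\equiv u^e\pmod p$ (which holds since $E$ is Eisenstein, and since $K'/K$ is unramified both $E(u)$ and the ramification index $e=e(K')=e(K)$ are unchanged), I obtain
$$\det(\varphi|_{\L/p\L})\equiv\bar c\cdot u^{e\sum_i(-h_i)}\pmod p$$
with $\bar c\in(k'[\![u]\!])^\times$, and applying $v_R$ gives $\alpha(\L/p\L)=e\sum_{i\in\t{HT}(V)}(-i)$.

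Neither part is expected to present a serious difficulty. The subtler points to verify are (i) that $\det(\varphi|_\M)$, although only well-defined up to a factor of the form $\varphi(\xi)/\xi$ with $\xi\in(k'[\![u]\!])^\times$ coming from change of basis, nevertheless has well-defined $v_R$, since $\varphi$ preserves units in $k'[\![u]\!]$; and (ii) the sign bookkeeping in the covariant convention, so that the elementary divisor exponents genuinely are $-h_i\ge 0$ rather than $+h_i$. This sign is consistent with the sample case $\varepsilon_p^{-s}$ (see the discussion after Theorem~\ref{thm-old-covariant}), in which $\varphi(\fe)=(c_0^{-1}E(u))^s\fe$ yields $\alpha=es$, matching $e\sum_i(-h_i)=es$.
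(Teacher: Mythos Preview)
Your proof is correct. The paper states this lemma without proof, treating both parts as standard facts; your argument supplies exactly the expected justification. For part (1), the block upper-triangular reduction is the natural approach, and your observation that $p$-torsion Kisin modules are free over the DVR $k'[\![u]\!]$ (hence the sequence splits $k'[\![u]\!]$-linearly) is the right starting point. For part (2), your use of the elementary-divisor description of $1\otimes\varphi$ is valid, though a slightly leaner route is to pass directly to the top exterior power: $\wedge^d\L$ is the rank-one Kisin module attached to $\det V$, whose Hodge--Tate weight is $\sum_i h_i$, so by the rank-one case (your sanity check $\varepsilon_p^{-s}$) one gets $\det(\varphi|_\L)=c\,E(u)^{\sum_i(-h_i)}$ with $c\in(\gs')^\times$, avoiding the need to invoke the full elementary-divisor statement. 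Either way the conclusion is immediate after reducing mod $p$ and using $E(u)\equiv u^e$. Your remarks on well-definedness of $v_R(\det\varphi)$ under semi-linear change of basis and on the sign convention are both on point.
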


\begin{lemma}\label{lemma bound h}
As we claimed at the end of proof of Proposition \ref{prop-key}, there exists a constant $h$ only depending on $d = \dim_{\BF_p} T_1 $, $p$, $f$ and $e$ such that $a_i \leq h$.
\end{lemma}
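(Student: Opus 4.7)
The plan is to compute the invariant $\alpha(\mathring{\L}_{n-1}) = v_R(\det\varphi)$ in two different ways and compare. The direct basis-side computation will express $\alpha(\mathring{\L}_{n-1})$ in terms of $\sum a_i$; the exact-sequence-side computation (via Lemma \ref{lemma alpha}(1) applied to \eqref{seq-1}) will express it through Hodge--Tate weights of $\widetilde Z_n$ and $\widetilde N_n$. The crucial observation is that the Hodge--Tate weight contribution coming from $\widetilde L_{n-1}$ (which grows unboundedly with $n$) will appear on both sides and cancel, leaving only a contribution controlled by $\widetilde N_n$, whose rank and Hodge--Tate weights are bounded by Corollary \ref{rem min HT wts} in terms of $p,f,d$ alone.

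Concretely, fix a $k'[\![u]\!]$-basis $\bar e_1,\dots,\bar e_m$ of $\L_{n-1}(s)/p\L_{n-1}(s)$ so that $u^{a_j}\bar e_j$ forms a basis of $\mathring{\L}_{n-1}$, and let $A=(A_{ij})$ denote the matrix of $\varphi$ in the $\bar e_j$ basis. Then $\varphi(u^{a_j}\bar e_j)=\sum_i u^{pa_j-a_i}A_{ij}(u^{a_i}\bar e_i)$, whose determinant is $u^{(p-1)\sum_j a_j}\det A$, giving
\[
\alpha(\mathring{\L}_{n-1})=(p-1)\textstyle\sum_j a_j+\alpha(\L_{n-1}(s)/p\L_{n-1}(s)).
\]
On the other hand, applying Lemma \ref{lemma alpha}(1) to the exact sequence \eqref{seq-1} yields $\alpha(\mathring{\L}_{n-1})=\alpha(\gZ_n)-\alpha(\N_n)$. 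Since the \'etale $\varphi$-module of $Z_n$ is $p$-torsion, the surjection $\widetilde{\gZ}_n\twoheadrightarrow\gZ_n$ factors through $\widetilde{\gZ}_n/p\widetilde{\gZ}_n$, and because $\alpha$ is non-negative on any finite free $p$-torsion Kisin module, one obtains $\alpha(\gZ_n)\leq\alpha(\widetilde{\gZ}_n/p\widetilde{\gZ}_n)$ and $\alpha(\N_n)\geq 0$. Combining,
\[
(p-1)\textstyle\sum_j a_j\leq\alpha(\widetilde{\gZ}_n/p\widetilde{\gZ}_n)-\alpha(\L_{n-1}(s)/p\L_{n-1}(s)).
\]

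To finish, I would invoke Lemma \ref{lemma alpha}(2). The Hodge--Tate weights of $\widetilde Z_n$ consist of those of $\widetilde N_n$, say $n_1,\dots,n_t$, together with those of $\widetilde L_{n-1}$ each shifted by $-s$. The contribution to $\alpha(\widetilde{\gZ}_n/p\widetilde{\gZ}_n)$ from the shifted $\widetilde L_{n-1}$-weights is exactly $\alpha(\L_{n-1}(s)/p\L_{n-1}(s))$ and cancels, leaving $(p-1)\sum_j a_j\leq e\sum_j(-n_j)$. Since $\widetilde N_n$ has $\cO_E$-rank at most $\dim_{\Fp}(p^{n-1}T_n)\leq d$ and Hodge--Tate weights in $[-(p^{fd}-2),0]$ by Corollary \ref{rem min HT wts}, the right-hand side is $\leq ed(p^{fd}-2)$. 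Combined with $a_i\geq 0$, we obtain $a_i\leq h:=\lceil ed(p^{fd}-2)/(p-1)\rceil$, depending only on $p,f,e,d$ as required. The main technical point will be justifying the additivity $\alpha(\widetilde{\gZ}_n/p\widetilde{\gZ}_n)=\alpha(\ker)+\alpha(\gZ_n)$, which requires checking that the kernel of $\widetilde{\gZ}_n/p\widetilde{\gZ}_n\twoheadrightarrow\gZ_n$ is a $\varphi$-stable finite free $k'[\![u]\!]$-submodule and hence qualifies as a $p$-torsion Kisin module with $\alpha\geq 0$.
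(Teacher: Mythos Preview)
Your approach is essentially the paper's: compute $\alpha(\mathring{\L}_{n-1})$ from the sub-basis $u^{a_j}\bar e_j$, bound it above via the exact sequence \eqref{seq-1} by $\alpha(\widetilde{\gZ}_n/p\widetilde{\gZ}_n)$, and then use Lemma~\ref{lemma alpha}(2) together with the short exact sequence in the top row of \eqref{Diag-2} so that the $\widetilde L_{n-1}(-s)$-contributions cancel and only the bounded $\widetilde N_n$-contribution remains.

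Two small slips affect your explicit constant, though neither touches the existence of $h$. First, in the setup of Proposition~\ref{prop-key} the $\bar e_j$ form a basis of $\L_{n-1}/p\L_{n-1}$, not of $\L_{n-1}(s)/p\L_{n-1}(s)$; with your choice of reference module the exponents you are bounding are really the paper's $b_j=a_j-\tfrac{es}{p-1}$, so the final bound on $a_j$ must include the additional term $\tfrac{es}{p-1}$ (the paper writes both identities side by side and keeps track of this). Second, Lemma~\ref{lemma alpha}(2) counts Hodge--Tate weights of $\widetilde N_n$ as a $\Q_p$-representation, so the number of terms in $\sum_j(-n_j)$ is the $\Z_p$-rank $\le d\cdot[E:\Q_p]$, not the $\O_E$-rank $\le d$; since by Lemma~\ref{rmk: big E} one needs $E\supseteq K_d$ (so $[E:\Q_p]\ge fd$), your bound $ed(p^{fd}-2)$ is too small by roughly a factor $fd$. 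The paper accordingly takes $E=K_d$, bounds the sum by $efd^2(p^{fd}-2)$, and sets $h:=3fe^2d^2p^{fd}$.
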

\begin{proof}
Recall that $\mathring \L_{n-1}$ has a $k'[\![u ]\!]$-basis formed by $u^{a_1}\bar e_1, \ldots u^{a_m}\bar e_m$, where $\bar e_1, \ldots \bar e_m$ is a $k'[\![u ]\!]$-basis of $\L_{n-1}/p\L_{n-1}$.
We clearly have
$$\alpha(\mathring \L_{n-1}) = (p-1)\sum_{1 \le i\le m}a_i +\alpha (\widetilde \L_{n-1}/p \widetilde \L_{n-1})
=(p-1)\sum_{1 \le i\le m}b_i +\alpha \left(\widetilde \L_{n-1}(-s)/p \widetilde \L_{n-1}(-s)\right).$$
 Since $b_i \le \sum_{1 \le i\le m}b_i$, so we only need to bound $\alpha(\mathring \L_{n-1})-\alpha (\widetilde \L_{n-1}(-s)/p \widetilde \L_{n-1}(-s))$.

By Lemma \ref{lemma alpha}(1) and Equation \eqref{seq-1}, $\alpha(\mathring \L_{n-1}) \le \alpha(\mathfrak Z_n)$.
Recall from diagram \eqref{Diag-2 first apperance}, $\mathfrak Z_n$ is realized via $\widetilde Z_n \onto Z_n\otimes_{\Fp}k_E \onto Z_n$. That is, we have
$$\widetilde{\mathfrak Z_n} \onto \widetilde{\mathfrak Z_n}/\varpi_E \widetilde{\mathfrak Z_n} =\oplus\mathfrak Z_n \onto \mathfrak Z_n,$$
where $\mathfrak Z_n/\varpi_E \mathfrak Z_n$ is a direct sum of ($[k_E:\Fp]$)-copies of $\mathfrak Z_n$. So we have
$$\alpha(\mathring \L_{n-1}) \le \alpha(\mathfrak Z_n) \le \alpha(\widetilde{\mathfrak Z_n}/\varpi_E \widetilde{\mathfrak Z_n}) \le \alpha(\widetilde{\mathfrak Z_n}/p\widetilde{\mathfrak Z_n}).$$

So we have
\begin{eqnarray*}
 \alpha(\mathring \L_{n-1})-\alpha \left(\widetilde \L_{n-1}(-s)/p \widetilde \L_{n-1}(-s)\right)
 &\le & \alpha(\widetilde{\mathfrak Z_n}/p\widetilde{\mathfrak Z_n}) - \alpha \left(\widetilde \L_{n-1}(-s)/p \widetilde \L_{n-1}(-s)\right) \\
 &= & e ( \sum_{i \in \t{HT} (\widetilde N_n)} -i  ), \textnormal{ by Lemma \ref{lemma alpha}(2)}.
\end{eqnarray*}
Here, the representation $\widetilde N_n$ comes from the first row of \eqref{Diag-2 first apperance}.
Recall that by Lemma \ref{rmk: big E}, we could have chosen our $E$ to be $K_d$. By Corollary \ref{rem min HT wts}, we have $\sum_{i \in \t{HT} (\widetilde N_n)} -i \le efd^2 (p^{fd}-2)$ (since $\widetilde N_n [\frac 1 p]$ has $\Qp$-dimension $efd^2$). So we can let $h$ be an integer such that
 $$h \ge \frac{es}{p-1}+ e\frac{efd^2 (p^{fd}-2)}{p-1}.$$
By Equation \eqref{eq s}, we can let $s= p^{fd}+p-2$. So for example, we can let
\begin{equation} \label{eq h}
  h:= 3fe^2d^2p^{fd}.
\end{equation}
\end{proof}

\section{Overconvergent basis and main theorem} \label{sec OC}

In this section, we prove our main theorem, namely, the overconvergence of $(\varphi, \tau)$-modules associated to finite free $\Zp$-representations. To do so, we first show the existence of an ``overconvergent basis", with respect to which all entries of the matrices for $\varphi$ and $\hat G$ are overconvergent elements.

\subsection{Existence of overconvergent basis}
\begin{theorem} \label{thm OC basis}
Let $\rho : G_K \to \GL_d (\Z_p)$ be a continuous representation and $\hat M = (M, \varphi_M, \hat G)$ the associated $(\varphi, \tau)$-module.
Then there exists an $\O_\E$-basis of $M$, and a constant $\alpha = \alpha (p, f, e , d)$ that only depends on $p, e, f $ and $d$, such that with respect to this basis,
\begin{itemize}
  \item the matrix of $\varphi_M$ is in $\Md(\gs [\![\frac{p}{u ^\alpha}]\!])$,
  \item the matrix of $\tau$ is in $\Md(W(R)[\![\frac{p}{u ^\alpha} ]\!])$ for any $\tau \in \hat G$.
\end{itemize}
\end{theorem}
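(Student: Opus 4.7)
The plan is to pick, for each torsion level $n$, a preferred basis of $M_n := M/p^nM$ coming from the maximal liftable Kisin model $\M_{(n)}$; verify that in such a basis both the $\varphi$-matrix and the $\hat G$-matrix lie in the claimed overconvergent rings modulo $p^n$; and then glue the bases across $n$ into an $\O_\E$-basis of $M$. By Proposition~\ref{prop-key} there is an integer $h=h(p,f,e,d)$ with $u^h\,\M_{(n)}^{i-1,i}\subset\M_{(n)}^{i,i+1}$ for all $n$ and all $1\le i\le n-1$. Applying Lemma~\ref{lem-devissage} to $\M_{(n)}$ produces elements $\fe_j^{(n,i)}\in\M_{(n)}$ for $1\le i\le n$, $1\le j\le d$, and since $\M_{(n)}[1/u]=M_n$ has rank $d$ over $\O_\E/p^n$ the top tuple $(\fe_1^{(n,n)},\dots,\fe_d^{(n,n)})$ is a basis of $M_n$. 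By Lemma~\ref{lem-generators} the element $\varphi(\fe_j^{(n,n)})\in\M_{(n)}$ is a $\gs$-linear combination of the $\fe_k^{(n,i)}$, and Lemma~\ref{lem-devissage}(1) then writes each $\fe_k^{(n,i)}$ as a combination of the $\fe_l^{(n,n)}$ with coefficients of the shape $(p/u^h)^{n-i}\cdot(\text{entry of }\gs[p/u^{2h}])$; substituting, the matrix of $\varphi_{M_n}$ in the basis $\fe^{(n,n)}$ lies in $\Md(\gs_n[p/u^{2h}])$.

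To pass from $M_n$ to $M$, I would arrange that the bases $(\fe^{(n,n)})_n$ are compatible, producing $\tilde e_j\in M$ with $\tilde e_j\bmod p^n=\fe_j^{(n,n)}$ for every $n$; this uses the freedom in lifting a $k[\![u]\!]$-basis of $\M_{(n)}^{n-1,n}=\M_{(n,1)}$ to $\M_{(n)}$, combined with the inclusion $q_{n+1,n}(\M_{(n+1)})=\M_{(n+1,n)}\subset\M_{(n)}$, to adjust lifts inductively so they match under $q_{n+1,n}$. Reducing mod $p^n$, the matrix of $\varphi_M$ in $(\tilde e_j)$ equals the matrix computed above, so $\varphi_M\in\Md(\gs[\![p/u^{2h}]\!])$. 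For the $\hat G$-action, the loose crystalline lift $\tilde L_n\twoheadrightarrow T_n$ from Theorem~\ref{thm p^n tor loose crys lift} carries a $(\varphi,\hat G)$-structure on its Kisin module $\tilde\L_n$, so $\hat G$ acts on $\hR\otimes_{\varphi,\gs}\tilde\L_n$ with matrix entries in $\hR\subset W(R)$. Via the surjection $\tilde\L_n\twoheadrightarrow\M_{(n)}$ and the identification of $(\varphi,\hat G)$- and $(\varphi,\tau)$-models (Lemma~\ref{lem model}), the $\tau$-action on $(\hR/p^n)\otimes_{\varphi,\gs}\M_{(n)}$ computed in the generators $\fe_j^{(n,i)}$ has entries in $\hR/p^n$; re-expressing in the top basis $\fe^{(n,n)}$ by Lemma~\ref{lem-devissage}(1) introduces the same $(p/u^h)^{n-i}$ factors as for $\varphi$, placing the matrix in $\Md((W(R)/p^n)[p/u^{2h}])$. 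Taking the inverse limit gives the matrix of any $\tau\in\hat G$ on $\hat M$ in the basis $(\tilde e_j)$ inside $\Md(W(R)[\![p/u^{2h}]\!])$, so we may take $\alpha:=2h$.

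The delicate part is the compatibility of choices across torsion levels. For the $\varphi$-part, the inclusion $q_{n+1,n}(\M_{(n+1)})\subset\M_{(n)}$ can be strict, so the naive sequence of top generators $\fe_j^{(n,n)}$ need not be compatible under the projections $M_{n+1}\to M_n$; one must work with the eventual images (e.g.\ $\bigcap_{m>n}\M_{(m,n)}$) and invoke the flexibility of Lemma~\ref{lem-devissage}(2) to correct lifts at each inductive step. For the $\hat G$-part, the loose crystalline lifts $\tilde L_n$ from Section~\ref{sec-lift} are built inductively via a twist by $\varepsilon_p^{-s}$ and a pullback in a cartesian square, and the $(\varphi,\hat G)$-matrices they carry can drift as $n$ grows; the fact from Remark~\ref{rem linear growth} that the Hodge--Tate weights of $\tilde L_n$ grow only \emph{linearly} in $n$, with slope $s=s(p,f,e,d)$, is precisely what keeps this drift bounded so that the $\tau$-matrices converge in $W(R)[\![p/u^{2h}]\!]$ with a uniform $\alpha$ depending only on $p,f,e,d$.
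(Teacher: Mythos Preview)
Your overall architecture matches the paper's, but the step you yourself flag as ``delicate'' is a genuine gap that your sketch does not close, and fixing it changes the value of $\alpha$.

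Concretely: to arrange $q_{n+1,n}(\fe^{(n+1,n+1)}) = \fe^{(n,n)}$ you need $\fe^{(n,n)}\in\M_{(n+1,n)}=q_{n+1,n}(\M_{(n+1)})$. But $\M_{(n+1,n)}$ is in general strictly smaller than $\M_{(n)}$ (equality would hold if $\M_{(n+1)}$ were $\gs_{n+1}$-free, which is not known), so a chosen $\fe^{(n,n)}\in\M_{(n)}$ need not lift to $\M_{(n+1)}$ at all. Passing to eventual images $\bigcap_{m>n}\M_{(m,n)}$ does not help: these form a \emph{descending} chain (the stabilisation in Lemma~\ref{lem-liftexist}(2) is for ascending chains), and the bound $u^h\M_{(m-1,1)}\subset\M_{(m,1)}$ from Proposition~\ref{prop-key} allows the chain $\M_{(m,1)}$ to shrink by $u^h$ at every step, so the intersection can be zero. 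Even if it stabilised, you would have to redo Lemma~\ref{lem-devissage} for the resulting model.

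The paper does not try to make the $\fe^{(n,n)}$ themselves compatible. Instead it uses the relation of Lemma~\ref{lem-devissage}(2),
\[
p\,(\fe^{(n,n)}_j)=\iota_{n-1,n}(\fe^{(n-1,n-1)}_j)\,\Lambda_{n-1}\bigl(I_d+\tfrac{p}{u^{2h}}Z_{n-1}\bigr),
\]
to define $Y_n:=\prod_{x=1}^{n-1}\Lambda_{x-1}(I_d+\tfrac{p}{u^{2h}}Z_{x-1})$ and then sets $(e^{(n)}_j):=(\fe^{(n,n)}_j)\,Y_n^{-1}$. This forces $p\,(e^{(n)}_j)=\iota_{n-1,n}(e^{(n-1)}_j)$, i.e.\ exact compatibility under $q_{n,n-1}$, so the $(e^{(n)}_j)$ glue to a basis of $M$. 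The price is that $Y_n^{\pm1}$ has entries in $u^{-O(nh)}\gs$, and the $\varphi$-matrix becomes $A_n=Y_nP_n\varphi(Y_n^{-1})$; the $\varphi(Y_n^{-1})$ contributes denominators of order $u^{-O(nph)}$, which is why the paper ends up with $\alpha=7ph$ rather than $2h$. The $\tau$-matrix is handled the same way, with $\tau(\varphi(Y_n^{-1}))$ in place of $\varphi(Y_n^{-1})$. Your claimed $\alpha=2h$ would only be correct if the raw $\fe^{(n,n)}$ were already compatible, which they are not.
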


Before we proceed to the proof, we will adopt the following convention on notations.
\begin{convention}
To ease the notation, we will use notations like $(e_j)$ to mean a row vector $(e_j)_{j=1}^d=(e_1, \dots, e_d)$.
\end{convention}

\begin{proof}
For the proof, we first inductively construct a specific set of generators for $\m_{(n)}$. Then we use them to define a basis for $M_n$. We show that the bases are compatible, and gives us the desired overconvergent basis by taking inverse limit .

\textbf{Step 1.} \emph{Generators of $\m_{(n)}$}.
First of all, by induction on $n$, we construct a specific set of generators $\{\e_{(n), j}^{(i)},  1 \le i \le n\}_{j=1}^d$ of $\m_{(n)}$ such that $\{p^{i-1}\e_{(n), j}^{(i)}\}_{j=1}^d$ forms a $\ku$-basis of $\m_{(n)}^{i-1, i}$.

We choose $\{\e_{(1), j}^{(1)}\}_{j=1}^d$ any $\ku$-basis of $\m_{(1)}$.
Suppose we have defined $\{\e_{(n-1), j}^{(i)},  1 \le i \le n-1\}_{j=1}^d$ the generators for $\m_{(n-1)}$.
Recall that in Corollary \ref{cor identify}, we have used the map $\iota_{n-1, n}: M_{n-1} \to M_{n}$ to identify $\m_{(n-1)}^{i-1, i}$ with $\m_{(n)}^{i-1, i}$ when $1 \le i \le n-1$. Now define
\begin{itemize}
  \item $\e_{(n), j}^{(i)} : =\iota_{n-1, n} (\e_{(n-1), j}^{(i)}) $ for $1 \le i \le n-1, 1\le j \le d$, and
  \item choose any $\{\e_{(n), j}^{(n)}\}_{j=1}^d$ in $\m_{(n)}$, so that $(p^{n-1}\e_{(n), j}^{(n)})$ is
  a $\ku$-basis of $\m_{(n)}^{n-1, n}=p^{n-1}\m_{(n)}$.
\end{itemize}
This finishes the inductive definition.

\textbf{Step 2.} \emph{Basis of $M_n$.}
With above, now we define basis for $M_n$.
By Lemma \ref{lem-devissage}(2), for any $x\ge 1$, we can write
\begin{eqnarray} \label{eq1}
(p\e_{(x), j}^{(x)}) =(\iota_{x-1, x}(\e_{(x-1), j}^{(x-1)}))  \Lambda_{x-1} (I_d+ \frac{p}{u^{2h}} Z_{x-1})
\end{eqnarray}
with $Z_{x-1} \in \Md(\gs[\frac{p}{u^{2h}}])$.
Now define
\begin{eqnarray}\label{eq2}
(e^{(n)}_j)  : = (\e_{(n), j}^{(n)})  \left(\prod_{x=1}^{n-1}  (  \Lambda_{x-1} ( I_d +  \frac{p}{u^{2h}} Z_{x-1}  )     )\right)^{-1}  .
\end{eqnarray}
We denote $Y_n:=\prod_{x=1}^{n-1}  (  \Lambda_{x-1} ( I_d +  \frac{p}{u^{2h}} Z_{x-1}  ) )$, and so $(e^{(n)}_j)   =  (\e_{(n), j}^{(n)}) Y_n^{-1}$.
Since $(\e_{(n), j}^{(n)})$ are killed by $p^n$, we can take $Y_n$ and  $Y_n^{-1}$ to be in $\Md(\mathcal O_{\mathcal E, n})$, and we do so. More precisely, we can arrange that $Y_n \in \Md(\frac{1}{u^{2(n-1)h}}\gs)$, and $Y_n^{-1} \in \Md(\frac{1}{u^{3(n-1)h}}\gs)$; these are only rough estimates, but they are sufficient for our use.

Since $\e_{(n), j}^{(n)}, 1\le j \le d$ lifts an $\mathcal O_{\mathcal E, 1}$-basis of $M_1$, by Nakayama Lemma, $ (e^{(n)}_1, \ldots e^{(n)}_d)$ is an $\mathcal O_{\mathcal E, n}$-basis of $M_n$.

\textbf{Step 3.} \emph{Compatibility of basis.}
We check that $(e^{(n-1)}_j) = (e^{(n)}_j  \mod p^{n-1})$.
Note that the composite of the map $M_n \onto M_{n-1} \hookrightarrow M_n$ is precisely the multiplication by $p$ map, where the first map is modulo $p^{n-1}$ and the second map is $\iota_{n-1, n}$.
So it suffices to check that $\iota_{n-1, n}(e^{(n-1)}_j)  = p(e^{(n)}_j)$. But this is easy consequence of \eqref{eq1} and \eqref{eq2}.

So now we can define $(e_j) : =\lim_{n \to \infty} (e^{(n)}_j)$, which is a basis of $M$.

\textbf{Step 4.} \emph{Matrices for $\varphi$ and $\tau$.}
Let $\tau$ be any element in $\hat G$, we claim that we have
\begin{itemize}
  \item $\varphi(e_j) =(e_j) A$ with $A\in \Md(\gs [\![\frac{p}{u^{\alpha}}]\!]).$

\item  $\tau(e_j) =(e_j) B$ with $\varphi (B) \in \Md(\hR [\![\frac{p}{u^{\alpha}}]\!]).$
\end{itemize}
By Lemma \ref{lem shrink ring}, it suffices to prove that
\begin{itemize}
  \item $\varphi(e_j^{(n)}) =(e_j^{(n)}) A_n$ with $A_n\in \Md(\frac{1}{u^{(n-1)\alpha}}\gs)$.

  \item $\tau(e_j^{(n)}) =(e_j^{(n)}) B_n$ with $\varphi (B_n) \in \Md(\frac{1}{u^{(n-1)\alpha}}\hR)$.
\end{itemize}
Since $\m_{(n)}$ comes from a loose crystalline lift, we can write
\begin{itemize}
  \item $\varphi(\e_{(n), j}^{(n)}) = \sum_{i=1}^n (\e_{(n), j}^{(i)}) P_i^{(1)}$, with $P_i^{(1)} \in \Md(\gs)$,
  \item $\tau(1\otimesvarphi \e_{(n), j}^{(n)}) = \sum_{i=1}^n (1\otimesvarphi \e_{(n), j}^{(i)}) Q_i^{(1)}$, with $Q_i^{(1)} \in \Md(\hR)$.
\end{itemize}
By Lemma \ref{lem-devissage}, for all $i<n$, we can write
$$ (\e_{(n), j}^{(i)}) =(\e_{(n), j}^{(n)}) Y_{i, n}  \text{ with }  Y_{i, n} \in \Md(\gs[\frac{p}{u^{2h}}]).$$
So we can write
\begin{itemize}
  \item $\varphi(\e_{(n), j}^{(n)}) =(\e_{(n), j}^{(n)}) P_n$, with $P_n \in \Md(\gs[\frac{p}{u^{2h}}])$,
  \item $\tau(1\otimesvarphi \e_{(n), j}^{(n)}) =(1\otimesvarphi  \e_{(n), j}^{(n)}) Q_n   $, with $Q_n \in \Md(\hR[\frac{p}{u^{2h}}])$.
\end{itemize}
So
\begin{itemize}
  \item $\varphi( e^{(n)}_j )   = ( e^{(n)}_j ) Y_n P_n \varphi(Y_n^{-1})$,
  \item $\tau(1\otimesvarphi e^{(n)}_j )   = ( 1\otimesvarphi e^{(n)}_j ) Y_n Q_n \tau(Y_n^{-1}),
$.
\end{itemize}
and so (using the fact $e^{(n)}_j$ is killed by $p^n$  and then $(\frac{p}{u ^{2h}})^m $ will vanishes for $m \geq n$)
\begin{itemize}
  \item $A_n=  Y_n P_n \varphi(Y_n^{-1}) \in \Md(\frac{1}{u^{2(n-1)h+2(n-1)h+3(n-1)ph}}\gs),$
  \item $\varphi(B_n)= \varphi(Y_n) Q_n \tau(\varphi(Y_n^{-1})) \in \Md(\frac{1}{u^{2(n-1)ph+2(n-1)h+3(n-1)ph}}\hR).$
\end{itemize}
Finally, we can simply let
\begin{equation}\label{eq alpha}
\alpha=7ph
\end{equation}
to conclude.
Note that since $\varphi (B) \in \Md(\hR [\![\frac{p}{u^{\alpha}}]\!])$, we have $B \in \Md(W(R)[\![\frac{p}{u^{\alpha/p}}]\!])$.
\end{proof}

\begin{lemma} \label{lem shrink ring}
Let $B$ be the ring $\gs$, or $\hR$, or $W(R)$.
Suppose that $y_n = \frac{x_n}{u^{\alpha (n-1)}} \in W_n(\Fr R)$ such that  $y_{n +1} \equiv y _{n}  \mod p ^n $ in $W_{n+1} (\Fr R)$.
If $x_n \in B$ for all $n$, then $y_n$ converges to a $y \in B[\![\frac{p}{u ^\alpha}]\!]$.
\end{lemma}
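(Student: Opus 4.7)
The plan is to realize the $p$-adic limit $y = \lim_n y_n$ as a power series in $p/u^\alpha$ with coefficients in $B$. I would inductively construct $b_0, b_1, \ldots \in B$ satisfying
$$y_n \equiv \sum_{i=0}^{n-1} b_i \left(\tfrac{p}{u^\alpha}\right)^i \pmod{p^n W(\FrR)}$$
for all $n\ge 1$. The base case is immediate by letting $b_0$ be any lift of $x_1$ to $B$. For the inductive step, the hypothesis $y_{n+1}\equiv y_n \pmod{p^n}$ combined with the induction hypothesis forces the element
$$c_n := x_{n+1} - \sum_{i=0}^{n-1} b_i p^i u^{\alpha(n-i)} \in B$$
to lie in $p^n W(\FrR)$; this follows by clearing the denominator $u^{\alpha n}$ in the inductive congruence and using $u^{\alpha n}y_{n+1}\equiv x_{n+1}\pmod{p^{n+1}}$. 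Setting $b_n := c_n/p^n$, a brief computation then yields $y_{n+1} \equiv \sum_{i=0}^{n} b_i (p/u^\alpha)^i \pmod{p^{n+1}}$, closing the induction.

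The one non-formal input, and the main obstacle, is the claim that $b_n$ actually lies in $B$; this is equivalent to the $p$-saturation property $B \cap p^n W(\FrR) = p^n B$, or equivalently to the injectivity of $B/p^n \hookrightarrow W(\FrR)/p^n$. This is where the hypothesis on $B$ is used. For $B = W(R)$ it follows from functoriality of Witt vectors applied to the injection $R \hookrightarrow \FrR$. For $B = \gs$, the embedding $u\mapsto [\upi]$ reduces mod $p$ to the injection $k\llbracket u\rrbracket \hookrightarrow R$, and the corresponding statement modulo $p^n$ then follows by devissage using that $\gs$ is $p$-torsion free. For $B = \hR$, the description $\hR = W(R) \cap \R_{K_0}$ makes $p$-saturation inside $W(R)$ immediate, since $\R_{K_0}$ is a $\Qp$-algebra and is thus closed under division by $p$.

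Once the $b_i$'s are constructed, the series $\sum_i b_i (p/u^\alpha)^i$ converges $p$-adically in $W(\FrR)$ because its $i$-th term lies in $p^i W(\FrR)$, and by construction its partial sums are congruent to $y_n$ modulo $p^n$. Therefore the sum equals $y$ and realizes it as an element of $B\llbracket p/u^\alpha\rrbracket$, as required. Apart from the $p$-saturation check above, the remainder of the argument is formal bookkeeping.
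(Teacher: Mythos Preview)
Your proof is correct and follows essentially the same approach as the paper's: both arguments inductively peel off one term $b_n(p/u^\alpha)^n$ at each step, with the key input being the $p$-saturation $B\cap p^n W(\FrR)=p^nB$. The only differences are cosmetic---you build explicit coefficients $b_i\in B$ whereas the paper works modulo $p^n$ to show $y_n\in B_n[p/u^\alpha]$---and you are more careful than the paper in actually verifying the saturation property for each of the three rings, which the paper simply asserts.
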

\begin{proof}
It suffices to show that $y_n \in B_n[\frac{p}{u ^\alpha}]$ for all $n$. This is true when $n=1$. Suppose it is valid for $n = m$. Let us consider the case that $n = m+1$.
Since $y_m \in B_{m}[\frac{p}{u ^\alpha}]$, we can lift it to an element in $ B_{m+1}[\frac{p}{u ^\alpha}]$ and still denote it by $y_m$, then
$x_{m}= u^{(m-1)\alpha} y_{m} \in B_{m+1}$.
Now
$$ y_{m+1} = y_m +(y_{m+1}-y_m)= y_m +  \frac { x_{m+1} - u^\alpha x_{m}}{u ^{\alpha m}}.$$
Since $y_{m+1} \equiv y _{m} \mod p ^m$, we have $x_{m+1} - u ^\alpha x_m \in p ^m W(R)_{m+1}\cap B_{m+1} =p^m B_{m+1}$, and so $x_{m+1} - u ^\alpha x_m = p ^m x'_{m +1}$ with $x'_{m+1} \in B_{m+1}$. So $y_{m+1} = y_m + \frac{p^m}{ u ^{\alpha m}} x'_{m+1}$. This completes the induction and proves the lemma.
\end{proof}

\subsection{Overconvergence}
Finally, we can prove our main theorem.
\begin{thm} \label{thm final OC}
Let $T$ be a continuous finite free $\Zp$-representation of $G_K$, and let $\hat M = (M, \varphi_M, \hat G)$ the associated $(\varphi, \tau)$-module. Then $\hat M$ is overconvergent.
\end{thm}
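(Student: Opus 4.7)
The plan is to derive overconvergence directly from Theorem \ref{thm OC basis}. Fix the $\O_\E$-basis $(e_1, \ldots, e_d)$ of $M$ and the constant $\alpha = \alpha(p, f, e, d)$ produced there, so that the matrix of $\varphi_M$ lies in $\Md(\gs[\![p/u^\alpha]\!])$ and, for each $\tau \in \hat G$, the matrix of $\tau$ on $\hat M$ lies in $\Md(W(R)[\![p/u^\alpha]\!])$ (absorbing the Frobenius shift at the end of the proof of that theorem into $\alpha$). The first task is to verify the embeddings $\gs[\![p/u^\alpha]\!] \hookrightarrow \O_\E^{\dag, r}$ and $W(R)[\![p/u^\alpha]\!] \hookrightarrow W(\FrR)^{\dag, r}$ for every $r > 0$ with $r\alpha < 1$. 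This is a term-by-term Teichm\"uller estimate: a summand $b_n (p/u^\alpha)^n$ with $b_n \in W(R)$ contributes at $p$-level $\ge n$ with $v_R$-content shifted by $-n\alpha$, so the overconvergence slope $n(1 - r\alpha) \to +\infty$, giving summability in $W(\FrR)^{\dag, r}$.

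Fix such an $r$ and set $M_r^\circ := \bigoplus_j \O_\E^{\dag, r} e_j \subseteq M$ and $\hat M_r^\circ := \bigoplus_j W(F_\tau)^{\dag, r} e_j \subseteq \hat M$. By the previous step these are $\varphi$-stable and $\hat G$-stable respectively (for the $\hat G$-action we use that the matrix entries, lying a priori in $W(F_\tau)$ by the very definition of $\hat M$, must therefore belong to $W(F_\tau) \cap W(\FrR)^{\dag, r} = W(F_\tau)^{\dag, r}$). Tautologically, the base-change maps $\O_\E \otimes_{\O_\E^{\dag, r}} M_r^\circ \to M$ and $W(F_\tau) \otimes_{W(F_\tau)^{\dag, r}} \hat M_r^\circ \to \hat M$ are isomorphisms.

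The main obstacle, and last step, is to prove the inclusions $M_r^\circ \subseteq M^{\dag, r'}(T)$ and $\hat M_r^\circ \subseteq \hat M^{\dag, r'}(T)$ for some $0 < r' \le r$, i.e., that each $e_j$, viewed inside $\O_{\widehat \E^\ur} \otimes_{\Zp} T$, actually lies in $\O^{\dag, r'}_{\widehat \E^\ur} \otimes_{\Zp} T$ (and similarly inside $W(\FrR)^{\dag, r'} \otimes_{\Zp} T$). Fixing a $\Zp$-basis $(v_i)$ of $T$ and writing $e_j = \sum_i x_{ij} v_i$ with $X = (x_{ij}) \in \GL_d(\O_{\widehat \E^\ur})$, the $\varphi$-compatibility gives $\varphi(X) = XA$ with $A \in \Md(\O_\E^{\dag, r})$, and the $\hat G$-compatibility gives analogous equations governed by the matrices $B(\tau) \in \Md(W(F_\tau)^{\dag, r})$. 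I expect the crux to be a Frobenius-descent argument in the spirit of Cherbonnier--Colmez: iterating the $\varphi$-relation yields $\varphi^n(X) = X \cdot A\,\varphi(A)\cdots\varphi^{n-1}(A)$, and because $\varphi$ strictly improves the overconvergence parameter on $W(\FrR)$, the integral structure of $\O_{\widehat \E^\ur}$ together with the overconvergent nature of $A$ (and $B(\tau)$) forces $X$ into $\Md(\O^{\dag, r'}_{\widehat \E^\ur})$ for some $r' > 0$. Combining these inclusions with the base-change isomorphisms of the previous paragraph then gives $M(T) = \O_\E \otimes_{\O_\E^{\dag, r'}} M^{\dag, r'}(T)$ and $\hat M(T) = W(F_\tau) \otimes_{W(F_\tau)^{\dag, r'}} \hat M^{\dag, r'}(T)$, which is exactly the overconvergence condition of Definition \ref{Def-OC}.
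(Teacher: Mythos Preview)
Your setup is correct and matches the paper's: both reduce to showing that the change-of-basis matrix $X$, defined by $(e_j)=(t_j)X$ with $(t_j)$ a $\Zp$-basis of $T$, lies in an overconvergent ring. But the paragraph you label ``the main obstacle'' is a genuine gap, and the method you sketch there is not the one that works.

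Your suggestion is to iterate $\varphi^n(X)=X\cdot A\,\varphi(A)\cdots\varphi^{n-1}(A)$ and invoke that $\varphi$ improves the overconvergence radius. But this relation expresses the \emph{unknown} $\varphi^n(X)$ in terms of the \emph{unknown} $X$; it never produces an a priori bound on $X$ itself. The Cherbonnier--Colmez mechanism you allude to needs an additional ingredient (in their case, Tate--Sen descent for the $\Gamma$-action) to get started, and nothing in your outline plays that role. As written, the argument is circular.

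The paper's proof of this step is both more elementary and more precise: it shows directly that $X\in\Md(W(R)[\![p/u^\alpha]\!])$. The method is induction on $n$ showing $u^{(n-1)\alpha}X_n\in\Md(W_n(R))$, where $X_n=X\bmod p^n$. The inductive step isolates the top Teichm\"uller layer $X'_m$ and reduces to an equation in $\Fr R$ of the form $\varphi(Y)=YB+C$ with $B,C\in\Md(R)$; then Lemma~\ref{lem W(R) eqn} (a one-line valuation argument: if some entry of $Y$ had negative $v_R$, then $\varphi$ multiplies that valuation by $p$, contradicting the right-hand side) forces $Y\in\Md(R)$. Finally Lemma~\ref{lem shrink ring} assembles the mod $p^n$ statements into membership in $W(R)[\![p/u^\alpha]\!]$. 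No iteration of $\varphi$ and no Cherbonnier--Colmez machinery is needed; the entire weight is carried by the fact that $\varphi$ on $\Fr R$ is the $p$-th power map, so it \emph{worsens} negative valuations and hence cannot preserve an equation with integral coefficients.
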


\begin{proof}
Let $(e_1, \dots, e_d)$ be the basis of $M$ as in Theorem \ref{thm OC basis}. Let $(t_1, \dots, t_d)$ be any basis of $T$, and let $(e_1, \dots, e_d)=(t_1, \dots, t_d)X$ where $X \in \Md(\O_{\widehat \E^\ur})$. Then we have $\varphi(X)=XA$.
In order to prove the theorem, it suffices to show that $X \in \Md(W(\Fr R)^{\dagger, r})$ for some $r>0$. We claim that $X \in \Md(W(R)[\![\frac{p}{u^\alpha}]\!])$.

To prove the claim, by Lemma \ref{lem shrink ring}, it suffices to show that $u^{(n-1)\alpha}X_n \in \Md(W_n(R))$ where $X_n:=X(\mod p^n)$. We prove this by induction on $n$.
When $n=1$, we have $\varphi(X_1)=X_1A_1$. Since $A_1 \in \Md(\gs_1)$ by Theorem \ref{thm OC basis}, we have $X_1 \in \Md(R)$ by Lemma \ref{lem W(R) eqn}.
Suppose the claim is true when $n \leq m$, and let us consider the case $n=m+1$.
We can always write $X = \sum\limits_{\ell = 0} ^\infty p ^\ell X'_\ell$ with $ X'_\ell \in \Md([\Fr R])$, where we use $[\Fr R]$ to mean the set of Teichm\"{u}ller lifts.
We have $X_{m+1}= \sum\limits_{\ell = 0} ^{m}p ^\ell X'_\ell$.
To finish the induction process, it suffices to show $u^{m\alpha} X'_m \in \Md([R])$.
From $\varphi(X_{m+1})=X_{m+1}A_{m+1}$, we can deduce
\begin{eqnarray*}
p^m\varphi(u^{m\alpha} X'_m) = p^m u^{m\alpha} X'_m u^{(p-1)m\alpha} A_{m+1} + u^{pm\alpha} X_mA_{m+1} -\varphi(u^{m\alpha}X_m)
\end{eqnarray*}
By induction hypothesis, $u^{(m-1)\alpha} X_m \in \Md(W(R))$, and by Theorem \ref{thm OC basis}, $u^{m\alpha} A_{m+1} \in \Md(W(R))$, so the above equation is reduced to the form
$$p^m\varphi(u^{m\alpha} X'_m)- p^m u^{m\alpha} X'_m u^{(p-1)m\alpha}  A_{m+1}  = C, $$
for some $C \in \Md(W(R))$. Since $p^mW(\Fr R)\cap W(R)=p^m W(R)$, we have $C \in \Md(p^m W(R))$. Divide both sides of the equation by $p^m$ and modulo $p$ on both sides, then we can apply Lemma \ref{lem W(R) eqn} to conclude that $u^{m\alpha} X'_m \in \Md([R])$. This completes the proof of our theorem.

As a final note, it is easy to see that the overconvergence radius $r$ can be any number $< \frac 1 \alpha$. Combined with Equation \eqref{eq alpha} and Equation \eqref{eq h}, it suffices that
\begin{equation} \label{eq OC radius}
  r < \frac{1}{21pfe^2d^2p^{fd}}.
\end{equation}
\end{proof}

\begin{lemma}\label{lem W(R) eqn}
Suppose $Y \in \Md(\Fr R)$ and $B, C \in \Md(R)$ such that $\varphi(Y)= YB+C$, then we must have $Y \in \Md(R)$.
\end{lemma}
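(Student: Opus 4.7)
The plan is to argue by contradiction using the valuation $v_R$ on $\Fr R$, applied entrywise to $Y$. Recall that $R$ is perfect of characteristic $p$, so Frobenius on $\Fr R$ is literally the $p$-th power map, and $v_R(\varphi(y)) = p \cdot v_R(y)$ for every $y \in \Fr R$. This amplification of (negative) valuations under $\varphi$ is the key mechanism I will exploit.

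Suppose $Y \notin \Md(R)$. Then some entry has strictly negative $v_R$-valuation, so I can set $-m := \min_{i,j} v_R(y_{ij}) < 0$ (i.e., $m > 0$) and pick indices $(i_0, j_0)$ that attain this minimum. Next I compare the two sides of the $(i_0, j_0)$-entry of $\varphi(Y) = YB + C$. The left-hand side is $\varphi(y_{i_0 j_0}) = (y_{i_0 j_0})^p$, which has valuation exactly $-pm$. For the right-hand side $\sum_k y_{i_0 k} b_{k j_0} + c_{i_0 j_0}$, since all entries of $B$ and $C$ lie in $R$ they have non-negative valuation, so each term satisfies $v_R(y_{i_0 k} b_{k j_0}) \geq -m$ and $v_R(c_{i_0 j_0}) \geq 0$. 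Therefore the RHS has valuation at least $-m$.

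Combining these, $-pm \geq -m$, i.e.\ $(p-1)m \leq 0$, which contradicts $p \geq 2$ and $m > 0$. Hence no such $m$ exists and $Y \in \Md(R)$.

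Since this reduces to a one-line valuation inequality, there is essentially no obstacle; the only thing to verify carefully is that $v_R$ behaves as expected on sums and products and that Frobenius on $\Fr R$ really is the $p$-th power map (both are standard). The structural point worth emphasizing is that this lemma is precisely what makes the inductive step in the proof of Theorem \ref{thm final OC} work: the $p$-th power blow-up of negative valuations by $\varphi$ dominates any loss incurred by multiplying by $B \in \Md(R)$, which is why the ``overconvergent'' estimate $u^{(n-1)\alpha}X_n \in \Md(W_n(R))$ propagates from $n$ to $n+1$.
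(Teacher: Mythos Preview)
Your proof is correct and follows exactly the approach the paper sketches: the paper's own proof says only ``consider an entry $y_{i,j}$ in the matrix $Y$ with minimal $v_R$ valuation, then one can easily deduce a contradiction,'' and you have supplied precisely that contradiction via the valuation inequality $-pm \geq -m$.
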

\begin{proof}
Suppose otherwise, consider an entry $y_{i, j}$ in the matrix $Y$ with minimal $v_R$ valuation, then one can easily deduce a contradiction.
\end{proof}

\end{document}